\theoremstyle{plain}
\newtheorem{thm}{Theorem}[section]
\newtheorem{corollary}[thm]{Corollary}
\newtheorem{definition}[thm]{Definition}
\newtheorem{lemma}[thm]{Lemma}
\newtheorem{proposition}[thm]{Proposition}
\newtheorem{remark}[thm]{Remark}
\newtheorem{theorem}[thm]{Theorem}
\numberwithin{equation}{section}
\newcommand{\N}{\mathbb{N}}
\newcommand{\R}{\mathbb{R}}
\newcommand{\Rn}{\mathbb{R}^n}
\DeclareMathOperator{\lip}{Lip\,\!}
\begin{document}

\title[$C^{1,\omega}$ extension formulas for $1$-jets on Hilbert spaces]{$C^{1,\omega}$ extension formulas for $1$-jets on Hilbert spaces}

\author{Daniel Azagra}
\address{ICMAT (CSIC-UAM-UC3-UCM), Departamento de An{\'a}lisis Matem{\'a}tico y Matem\'atica Aplicada,
Facultad Ciencias Matem{\'a}ticas, Universidad Complutense, 28040, Madrid, Spain. }
\email{azagra@mat.ucm.es}

\author{Carlos Mudarra}
\address{Aalto University, Department of Mathematics and Systems Analysis, P.O. BOX 11100, FI-00076 Aalto, Finland}
\email{carlos.mudarra@aalto.fi}

\date{December 12, 2019}

\keywords{Whitney extension theorem, Hilbert space, $C^{1, \omega}$ functions}

\subjclass[2010]{26B05, 26B25, 46C99, 46T20.}

\thanks{D. Azagra and C. Mudarra were partially supported by Grant PGC2018-097286-B-I00. C. Mudarra also acknowledges financial support from the Academy of Finland}

\begin{abstract}
We provide necessary and sufficient conditions for a $1$-jet $(f, G):E\to \R\times X$ to admit an extension $(F, \nabla F)$ for some $F\in C^{1, \omega}(X)$. Here $E$ stands for an arbitrary subset of a Hilbert space $X$ and $\omega$ is a modulus of continuity. As a corollary, in the particular case $X=\R^n$, we obtain an extension (nonlinear) operator whose norm does not depend on the dimension $n$. Furthermore, we construct extensions $(F, \nabla F)$ in such a way that: (1) the (nonlinear) operator $(f, G)\mapsto (F, \nabla F)$ is bounded with respect to a natural seminorm arising from the constants in the given condition for extension (and the bounds we obtain are almost sharp); (2) $F$ is given by an explicit formula; (3) $(F, \nabla F)$ depend continuously on the given data $(f, G)$; (4) if $f$ is bounded (resp. if $G$ is bounded) then so is $F$ (resp. $F$ is Lipschitz). We also provide similar results on superreflexive Banach spaces. 
\end{abstract}

\maketitle

\section{Introduction and main results}

Throughout this paper we will assume that $\omega: [0,+\infty) \to [0,+ \infty)$ is a concave and increasing function such that $\omega(0)=0$ and $\lim_{t \to +\infty} \omega(t)=+\infty.$ Also, we will denote
\begin{equation}\label{definition of varphi}
\varphi(t)= \int_0^t \omega(s) ds
\end{equation}
for every $t \geq 0$, and if $X$ is a Banach space then $C^{1,\omega}(X)$ will stand for the set of all functions $g: X\to\R$ which are Fr\'echet differentiable and such that $Dg:X\to X^{*}$ is uniformly continuous, with modulus of continuity $\omega$, that is to say, there exists some constant $C>0$ such that
$$
\|D g(x)-D g(y)\|_*\leq C\omega(\|x-y\|)
$$
for all $x, y\in X$. Here $\|\cdot\|_{*}$ denotes the usual norm of the dual space $X^{*}$, defined by $\|\xi\|_{*}=\sup\{ \xi(x) : x\in X, \,\|x\|\leq 1\}$ for every $\xi\in X^{*}$.
 
If $E$ is a subset of $\R^n$ and we are given functions $f:E\to\R$, $G:E\to\R^n$, Glaeser's $C^{1,\omega}$ version of the classical Whitney extension theorem (see \cite{Whitney, Glaeser}) tells us that there exists a function $F\in C^{1,\omega}(\R^n)$ with $(F, \nabla F)=(f, G)$ on $E$ if and only if the $1$-jet $(f,G)$ satisfies the following property: there exists a constant $M>0$ such that
\begin{equation}\label{Whitneys condition}
|f(x)-f(y)-\langle G(y), x-y\rangle|\leq M\varphi(|x-y|), \,\,\, \textrm{ and } \,\,\,
|G(x)-G(y)|\leq M \omega(|x-y|)
\end{equation}
for all $x, y\in E$. We can trivially extend $(f,G)$ to the closure $\overline{E}$ of $E$ so that the inequalities \eqref{Whitneys condition} hold on $\overline{E}$ with the same constant $M$, and the function $F$ can be explicitly defined by
\begin{equation}\label{Whitney Extension Operator} 
F(x)=\begin{cases}
f(x) & \mbox{ if } x\in \overline{E} \\
\sum_{Q\in\mathcal{Q}}\left( f(x_Q)+\langle G(x_Q), x-x_Q\rangle\right)\psi_{Q}(x)  & \mbox{ if }  x\in\R^n\setminus \overline{E},
\end{cases}
\end{equation}
where $\mathcal{Q}$ is a family of {\em Whitney cubes} that cover the complement of $\overline{E}$, $\{\psi_{Q}\}_{Q\in\mathcal{Q}}$ is the usual Whitney partition of unity associated with $\mathcal{Q}$, and $x_Q$ is a point of $\overline{E}$ which minimizes the distance of $\overline{E}$ to the cube $Q$. Recall also that the function $F$ constructed in this way has the property that 
$$
M_{w}(\nabla F):=\sup_{x, y\in\R^n, x\neq y}\frac{|\nabla F(x)-\nabla F(y)|}{\omega\left( |x-y|\right)} \leq k(n)M,
$$ where $k(n)$ is a constant depending only on $n$ (but going to infinity as $n\to\infty$), and that the operator $(f, G)\to (F, \nabla F)$ thus obtained is linear.

The existence of some $M>0$ satisfying \eqref{Whitneys condition} is equivalent to saying that
\begin{equation}\label{Glaesers condition}
A(f,G):=\sup_{ x\in\R^n; \, y, z\in E, \, |x-y| + |x-z|> 0}\frac{ |f(y)+\langle G(y), x-y\rangle- f(z)-\langle G(z), x-z\rangle | }{ \varphi(|x-y|) +\varphi(|x-z|) } <\infty,
\end{equation}
and Whitney's theorem for $C^{1, \omega}(\R^n)$ can be restated as follows.
\begin{theorem}[Glaeser's version of Whitney's extension theorem for $C^{1, \omega}$; see \cite{Glaeser}]\label{WhitneyGlaeserThm}
For every $n\in\N$ there exists some constant $k(n)>0$, depending only on $n$, such that, for every $1$-jet $(f, G)$ defined on a subset $E$ of $\R^n$, we have that $A(f, G)<\infty$ if and only if there exists $F\in C^{1, \omega}(\R^n)$ such that $(F, \nabla F)=(f, G)$ on $E$ and $A(F, \nabla F)\leq k(n) A(f, G)$.
\end{theorem}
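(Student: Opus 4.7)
The plan is to derive Theorem \ref{WhitneyGlaeserThm} from the classical Whitney--Glaeser extension theorem phrased in terms of the two inequalities \eqref{Whitneys condition}, by showing that the scalar quantity $A(f,G)$ is comparable, up to a universal constant, to the optimal constant $M$ appearing in \eqref{Whitneys condition}. Once this equivalence is in place, both implications of the theorem fall out together with the quantitative estimate $A(F,\nabla F)\leq k(n)A(f,G)$.

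For the \emph{only if} direction and the final norm bound, the first step is to prove that $A(F,\nabla F)\leq M_w(\nabla F)$ for every $F\in C^{1,\omega}(\R^n)$. This follows from the Taylor representation
\begin{equation*}
F(x)-F(y)-\langle\nabla F(y),x-y\rangle=\int_0^1\langle\nabla F(y+t(x-y))-\nabla F(y),x-y\rangle\,dt,
\end{equation*}
bounding the integrand by $M_w(\nabla F)\,\omega(t|x-y|)|x-y|$, and substituting $s=t|x-y|$ to obtain
\begin{equation*}
|F(x)-F(y)-\langle\nabla F(y),x-y\rangle|\leq M_w(\nabla F)\,\varphi(|x-y|).
\end{equation*}
Writing $f(y)+\langle G(y),x-y\rangle-f(z)-\langle G(z),x-z\rangle$ as the difference of $F(x)-F(y)-\langle\nabla F(y),x-y\rangle$ and $F(x)-F(z)-\langle\nabla F(z),x-z\rangle$ and applying the triangle inequality then yields $A(F,\nabla F)\leq M_w(\nabla F)$.

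For the \emph{if} direction, I would deduce \eqref{Whitneys condition} from $A(f,G)\leq M$. Specialising $x=y$ in the definition of $A$ gives $|f(y)-f(z)-\langle G(z),y-z\rangle|\leq M\varphi(|y-z|)$, which is the first inequality of \eqref{Whitneys condition}. For the second, I would take $x=y+tv$ with $v$ a unit vector and $t>0$, expand $\langle G(y),x-y\rangle=t\langle G(y),v\rangle$ and $\langle G(z),x-z\rangle=\langle G(z),y-z\rangle+t\langle G(z),v\rangle$, and subtract the Taylor inequality just obtained from the bound supplied by $A$. This leads to
\begin{equation*}
t|\langle G(y)-G(z),v\rangle|\leq M\bigl(\varphi(t)+\varphi(|y-z|+t)+\varphi(|y-z|)\bigr).
\end{equation*}
Choosing $v=(G(y)-G(z))/|G(y)-G(z)|$, setting $t=|y-z|$, and using the elementary estimates $\varphi(r)\leq r\omega(r)$ together with the subadditivity $\omega(2r)\leq2\omega(r)$ (a consequence of concavity and $\omega(0)=0$) produces $|G(y)-G(z)|\leq6M\omega(|y-z|)$, the second inequality of \eqref{Whitneys condition}.

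With \eqref{Whitneys condition} in hand, I would invoke the classical $C^{1,\omega}$ Whitney--Glaeser extension theorem recalled in the excerpt and construct $F$ explicitly via \eqref{Whitney Extension Operator}, obtaining $M_w(\nabla F)\leq k(n)M$ for a dimensional constant $k(n)$. Combined with the first-paragraph bound $A(F,\nabla F)\leq M_w(\nabla F)$, this gives $A(F,\nabla F)\leq k(n)A(f,G)$, as required. The only genuinely substantial ingredient is the classical Whitney partition-of-unity construction encoded in \eqref{Whitney Extension Operator}; the main obstacle in the rest of the argument is just the bookkeeping of constants in passing between $A(f,G)$ and \eqref{Whitneys condition}, which the computation above handles with a universal factor of $6$.
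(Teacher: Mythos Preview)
Your proposal is correct and follows essentially the approach the paper implicitly takes: Theorem~\ref{WhitneyGlaeserThm} is cited as a classical result (see \cite{Glaeser}) rather than proved anew, and the paper simply notes that the existence of $M>0$ satisfying \eqref{Whitneys condition} is equivalent to $A(f,G)<\infty$, which is precisely the equivalence you establish before invoking the Whitney extension operator \eqref{Whitney Extension Operator}. The quantitative passage from $A(f,G)$ to the gradient bound that you carry out with constant $6$ is later done more carefully in Proposition~\ref{equivalence (mg)(W)}(3), yielding the sharper constant $3$ in general Banach spaces (and better still in Hilbert spaces), but the logic is the same.
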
 
Here we have denoted
\begin{equation}
A(F, \nabla F):=\sup_{ x, y, z\in\R^n, \, |x-y| + |x-z|> 0}\frac{ |F(y)+\langle \nabla F(y), x-y\rangle- F(z)-\langle \nabla F(z), x-z\rangle | }{ \varphi(|x-y|) +\varphi(|x-z|) }
\end{equation}
because $(F, \nabla F)$ can be regarded as a $1$-jet on $\R^n$.
More generally, if $X$ is a Banach space, $E$ is a subset of $X$, and $(f, G):E\to\R\times  X^{*}$, we will denote
\begin{equation}\label{Our optimal condition}
A(f,G; E):=\sup_{ x\in X; \, y, z\in E, \, \|x-y \| + \|x-z \|> 0}\frac{ |f(y)+\langle G(y), x-y\rangle- f(z)-\langle G(z), x-z\rangle | }{ \varphi(\|x-y\|) +\varphi(\|x-z\|) } <\infty,
\end{equation}
which will be shortened to $A(f, G)$ whenever the subset $E$ is understood. In particular, for a differentiable function $F:X\to\R$, we let $A(F, \nabla F)$ stand for $A(F, \nabla F; X)$.

As we said, if we construct such an $F$ by means of the Whitney Extension Operator \eqref{Whitney Extension Operator}, then we necessarily have $\lim_{n\to\infty}k(n)=\infty$ for all possible choices of $k(n)$.
Nevertheless, in the case $\omega(t)=t$ (which gives raise to the important class of $C^{1,1}$ functions), J.C. Wells \cite{Wells} and other authors \cite{LeGruyer1, DaniilidisHaddouLeGruyerLey, AzagraLeGruyerMudarraExplicitFormulas} showed, by very different means, that the $C^{1,1}$ version of the Whitney extension theorem holds true if we replace $\R^n$ with any Hilbert space and, moreover, there is a (nonlinear) extension operator $(f,G)\mapsto (F, \nabla F)$ which is minimal, in the following sense. Given a Hilbert space $X$, with norm denoted by $|\cdot|$, a subset $E$ of $X$, and functions $f:E\to\R$, $G:E\to X$, a necessary and sufficient condition for the $1$-jet $(f,G)$ to have a $C^{1,1}$ extension $(F, \nabla F)$ to the whole space $X$ is that
\begin{equation}\label{GlaeserLeGruyersCondition}
A(f,G):=\sup_{x\in X; \, y, z\in E, \, |x-y| + |x-z|> 0}\frac{ 2|f(y)+\langle G(y), x-y\rangle- f(z)-\langle G(z), x-z\rangle | }{ |x-y|^2 +|x-z|^2 } <\infty.
\end{equation}
Moreover, the extension $(F, \nabla F)$ can be taken with best Lipschitz constants, in the sense that
\begin{equation}\label{in the C11 case the trace seminorm is equal to M}
A(F, \nabla F) = A(f, G) = \|(f,G)\|_{E},
\end{equation}
where
\begin{equation}\label{C11 trace seminorm}
\|(f,G)\|_{E}:= \inf \lbrace \lip(\nabla H) \: : \: H \in C^{1,1}(X) \:\: \text{and} \:\: (H,\nabla H) = (f,G) \:\: \text{on} \:\: E \rbrace
\end{equation}
is the {\em $C^{1,1}$ trace seminorm} of the jet $(f,G)$ on $E$.
In particular, considering $X=\R^n$ we deduce the remarkable corollary that in the case $\omega(t)=t$ one can take $k(n)=1$ for all $n$ in Theorem \ref{WhitneyGlaeserThm}.

Let us point out that condition \eqref{GlaeserLeGruyersCondition} appears in Le Gruyer's paper \cite{LeGruyer1}. Wells' Theorem was stated and proved in \cite{Wells} with the following equivalent condition: there exists a number $M>0$ such that
$$
f(z) \leq f(y) + \frac{1}{2} \langle G(y)+G(z), z-y \rangle + \frac{M}{4} |y-z|^2 - \frac{1}{4M} |G(y)-G(z)|^2 \eqno(W^{1,1})
$$
for all $y,z\in E$. That this condition is equivalent to \eqref{GlaeserLeGruyersCondition} can be easily checked as follows: for each $M>0$ consider the quadratic function
$$
V_M(x):= f(y)+\langle G(y), x-y\rangle -f(z)-\langle G(z), x-z\rangle +\frac{M}{2}\left( |x-z|^2 +|x-y|^2\right),
$$
and find the point $x_M\in X$ that minimizes $V_M$. Then we have $A(f, G)\leq M<\infty$ if and only if $V_{M}(x_{M})\geq 0$, which after a straightforward computation is easily seen to be equal to condition $(W^{1,1})$.

We should also mention that Wells's proof \cite{Wells} was rather elaborate (and constructive only in the case of a finite set $E$), and that Le Gruyer's proof \cite{LeGruyer1}, though very elegant and simple, was not constructive either (Zorn's lemma was used in an essential part of the argument). Very recently, the papers \cite{AzagraLeGruyerMudarraExplicitFormulas, DaniilidisHaddouLeGruyerLey} supplied constructive proofs of Wells' theorem by means of two different explicit formulas, and also provided new proofs (with explicit formulas) for a related $C^{1,1}$ convex extension problem for $1$-jets that had been previously considered in \cite{AzagraMudarra2017PLMS}; see also \cite{AzagraMudarraGlobalGeometry} for the $C^1$ convex case.

For more information about Whitney extension problems for jets and for functions, and the related extension operators, see \cite{BierstoneMilmanPawlucki1, BierstoneMilmanPawlucki2, BrudnyiShvartsman, DacorognaGangbo, Fefferman2005, Fefferman2006, FeffermanSurvey, Fefferman2009, Fefferman2010, FeffermanIsraelLuli3, FeffermanIsraelLuli1, FeffermanIsraelLuli2, FeffermanKlartag, FeffermanShvartsman, Glaeser, HajlaszEtALSobolevExtension, VossHirnMcCollum, Israel, KlartagZobin, KoskelaEtAl,  LeGruyer1, LeGruyer2, Luli, PinamontiSpeightZimmerman, Shvartsman2010, Shvartsman2014, Shvartsman2017, ShvartsmanZobin,  Wells, Whitney, Zobin1998, Zobin1999} and the references therein.

In this paper we will consider the following questions: is Theorem \ref{WhitneyGlaeserThm} true if we replace $\R^n$ with a Hilbert space $X$? Or equivalently, is there a version of Wells's theorem for not necessarily linear moduli of continuity $\omega$? In particular, is Theorem \ref{WhitneyGlaeserThm} true with bounded $k(n)$? And what can be said about other Banach spaces $X$? Let us mention that, as was shown in \cite{JSSG}, a similar question for the class $C^1(X)$ has a positive answer, but to the best of our knowledge nothing is known for nonlinear $\omega$ and the class $C^{1, \omega}(X)$, where $X$ is a Hilbert space (or more generally a Banach space). It is also important to notice that for the classes $C^{k, \omega}(X)$ with $k\geq 2$ this kind of results are not true: the best possible constants $k(n)$ in the higher order versions of Theorem \ref{WhitneyGlaeserThm} established in \cite{Glaeser} must go to $\infty$ as $n\to\infty$; see \cite[Theorem 1 of Section 5]{Wells}.

As we will see, the main result of our paper gives a positive answer to the first question: a jet $(f, G)$ defined on an arbitrary subset $E$ of a Hilbert space $X$ has an extension $(F, \nabla F)$ with $F\in C^{1, \omega}(X)$ if and only if $A(f, G)<\infty$. Moreover, we can take $F$ such that
$$
A(F, \nabla F)\leq 2A(f, G).
$$

In particular, considering $X=\R^n$, this shows that in Theorem \ref{WhitneyGlaeserThm} one can always take $k(n)\leq 2$ for all $n\in\N$.  We will also prove similar results for superreflexive Banach spaces $X$ for a certain class of moduli of continuity.

In order to state and explain our results more precisely, let us introduce some more notation and definitions. Recall that, given a function $g: \R \to \R$, the Fenchel conjugate of $g$ is defined by
\begin{equation}\label{definition of Fenchel conjugate}
g^*(t) = \sup_{s\in \R} \lbrace st -g(s) \rbrace, \quad t\in \R,
\end{equation}
where $g^*$ may take the value $+\infty$ at some $t.$ If $(X, \|\cdot\|)$ is a Banach space, with dual $(X^{*}, \|\cdot\|_{*})$, for any $\xi\in X^{*}$ we let $\langle \xi, v\rangle :=\xi(v)$ denote the duality product.

\begin{definition}
{\em We will say that a $1$-jet $(f,G)$ defined on a subset $E$ of a Banach space $X$ satisfies condition $(W^{1,\omega})$ with constant $M>0$ on $E$ provided that
$$
f(z) \leq f(y) + \frac{1}{2} \langle G(y)+G(z), z-y \rangle + M \varphi(\|y-z\|) - 2M  \varphi^* \left( \frac{\|G(y)-G(z)\|_*}{2M} \right) \eqno (W^{1, \omega})
$$
for all $y,z\in E.$}
\end{definition}

Notice that $\varphi^*$ is simply $\varphi^*(t)=\int_0^t \omega^{-1}(s) ds$ for all $t\geq 0.$ For a mapping $G : E \to X^*$, where $E \subseteq X$, we will denote 
$$
M_\omega(G)= \sup_{x,y\in E, \, x \neq y} \frac{\| G(x)-G(y)\|_*}{\omega(\| x-y \|)}.
$$
\begin{theorem}\label{Mainthm1}
Let $E$ be a nonempty subset of a Hilbert space $(X, |\cdot|)$, and $f: E \to \R, \: G:E \to X$ two functions. There exists $F \in C^{1,\omega}(X)$ such that $(F,\nabla F)=(f,G)$ on $E$ if and only if  $(f,G)$ satisfies $(W^{1,\omega})$ with some $M>0.$ Moreover, we can arrange $M_\omega(\nabla F) \leq (16/\sqrt{3}) M.$
\end{theorem}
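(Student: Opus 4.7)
The argument splits into necessity (the easy direction) and sufficiency (the substance of the result).

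For necessity, suppose $F\in C^{1,\omega}(X)$ extends $(f,G)$ with $M_\omega(\nabla F)\leq M$. Starting from the integral identity
\[
f(z)-f(y)-\tfrac{1}{2}\langle G(y)+G(z),z-y\rangle=\int_0^{1}\bigl\langle \nabla F(y+t(z-y))-(1-t)G(y)-tG(z),\,z-y\bigr\rangle\,dt,
\]
I would decompose the integrand as $(1-t)[\nabla F(y+t(z-y))-G(y)]+t[\nabla F(y+t(z-y))-G(z)]$, estimate each summand via the modulus of continuity of $\nabla F$, and then invoke the Fenchel--Young inequality $ab\leq\varphi(a)+\varphi^{*}(b)$ (with $b$ calibrated to $|G(y)-G(z)|/(2M)$) to produce exactly the right-hand side of $(W^{1,\omega})$. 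Concavity of $\omega$ (equivalently, convexity of $\varphi$ and subadditivity of $\omega$) is what allows the bound to collapse to a single $M\varphi(|y-z|)$ term.

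For sufficiency, which is the bulk of the proof, I would follow the template of the explicit formulas of Azagra--LeGruyer--Mudarra \cite{AzagraLeGruyerMudarraExplicitFormulas} for $C^{1,1}$ jets: introduce, for a free parameter $c>0$, the envelopes
\[
g_1(x)=\inf_{y\in E}\bigl\{f(y)+\langle G(y),x-y\rangle+cM\varphi(|x-y|)\bigr\},\quad g_2(x)=\sup_{y\in E}\bigl\{f(y)+\langle G(y),x-y\rangle-cM\varphi(|x-y|)\bigr\},
\]
and define $F=\tfrac{1}{2}(g_1+g_2)$. Using condition $(W^{1,\omega})$ together with the Fenchel equality $\varphi(t)+\varphi^{*}(\omega(t))=t\omega(t)$ should force $g_2\leq g_1$ on $X$ with $g_1(x)=g_2(x)=f(x)$ whenever $x\in E$; comparing the one-sided differentials of the two envelopes at points of $E$ then yields $\nabla F=G$ on $E$.

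The principal obstacle is to show that each envelope $g_i$ is Fr\'echet differentiable on all of $X$ with a dimension-free bound $M_\omega(\nabla g_i)\leq C\,M$. For $x,x'\in X$ I would pick near-optimal selectors $y_x,y_{x'}\in E$ in the inf (resp.\ sup) and exploit the parallelogram identity --- the hallmark of Hilbert-space geometry --- together with $(W^{1,\omega})$ to derive an estimate of the form $|G(y_x)-G(y_{x'})|\leq C\omega(|x-x'|)$ with $C$ independent of $\dim X$. Optimizing the free parameter $c$ so as to balance the contributions of $\varphi$ and its conjugate $\varphi^{*}$ should then produce the explicit constant $16/\sqrt{3}$. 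I expect the Hilbert-space selector estimate, together with the coercivity argument guaranteeing that near-optimal selectors cluster well in infinite dimension, to be the technically hardest part of the proof.
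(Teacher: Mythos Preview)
Your necessity sketch is fine and aligns with the paper's route (which passes through Proposition~\ref{equivalence (mg)(W)}(2)). The sufficiency proposal, however, has a genuine gap.

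You propose $F=\tfrac{1}{2}(g_1+g_2)$ and plan to show that each envelope $g_i$ is individually Fr\'echet differentiable with a $C^{1,\omega}$ bound on its gradient. This is exactly the strategy that succeeds for $\omega(t)=t$ in \cite{AzagraLeGruyerMudarraExplicitFormulas}, but it does \emph{not} extend to nonlinear $\omega$, and the paper says so explicitly in the introduction. The function $-g_1$ is a supremum of strongly $\varphi$-paraconvex functions, hence strongly $\varphi$-paraconvex; but nothing forces $g_1$ itself to be strongly $\varphi$-paraconvex, so $g_1$ need not be differentiable --- it can have genuine corners where two of the functions in the infimum cross. In the $C^{1,1}$ case one is rescued by the equivalence ``$u\in C^{1,1}$ iff $u\pm C|\cdot|^2$ are respectively concave and convex for some $C$'', which reduces everything to convex analysis; for nonlinear $\omega$ there is no such characterization, and no parallelogram-identity manipulation with near-optimal selectors will manufacture differentiability of an inf-envelope that simply is not differentiable.

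The paper's replacement for the averaging step is the main new idea: instead of $\tfrac{1}{2}(g_1+g_2)$, one takes the \emph{strongly paraconvex envelope} of $g_1$,
\[
F(x)=\sup\{h(x):h\le g_1,\ h\ \text{is strongly}\ 2M\widetilde{\varphi}\text{-paraconvex}\},\qquad \widetilde{\varphi}=\varphi(2\,\cdot).
\]
This $F$ is strongly paraconvex by construction, and a short translation argument (Theorem~\ref{MainTechnicalThm}) shows that $-F$ is also strongly paraconvex, using only that $-g_1$ is. By Proposition~\ref{if u and -u are strongly paraconvex then u is unoformly differentiable}, a function which together with its negative is strongly $C\widetilde{\varphi}$-paraconvex is automatically $C^{1,\widetilde{\omega}}$; applying the Hilbert-space estimate there with $C=2M$ and $\widetilde{\omega}=2\omega(2\,\cdot)$ gives $|\nabla F(x)-\nabla F(y)|\le (4/\sqrt{3})\cdot 2M\cdot\widetilde{\omega}(|x-y|/2)=(16/\sqrt{3})\,M\,\omega(|x-y|)$. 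The dilation $\varphi\mapsto\varphi(2\,\cdot)$ is needed because $(W^{1,\omega})$ with constant $M$ only yields the ordering $g_2\le g_1$ after such a rescaling (Proposition~\ref{equivalence (mg)(W)}(1)). The Hilbert structure enters solely through Lemma~\ref{constantnormomega}, which controls the paraconvexity constant of $-\varphi(|\cdot|)$; no selector argument is used.
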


On the other hand, for any function $F\in C^{1,\omega}(X),$ the jet $(F, \nabla F)$ satisfies $(W^{1,\omega})$ with constant $M=M_\omega(\nabla F);$ see Proposition \ref{equivalence (mg)(W)}(2) below for a proof. Consequently, if $M^*$ denotes the infimum of those numbers $M>0$ for which $(f,G)$ satisfies $(W^{1,\omega})$ with constant $M,$ Theorem \ref{Mainthm1} yields the following estimate
$$
M^* \leq \inf \lbrace M_{\omega}(\nabla H) \, : \, H \in C^{1,\omega}(X) \:\: \text{and} \:\: (H,\nabla H) = (f,G) \:\: \text{on} \:\: E \rbrace \leq (16/\sqrt{3}) M^*.
$$
We may obtain slightly better constants in the estimate of the gradient if we consider the following extension condition.
\begin{definition}
{\em We will say that a $1$-jet $(f, G)$ defined on a subset $E$ of a Banach space $X$ satisfies condition $(mg^{1, \omega})$ with constant $M$ on $E$ provided that 
$$
f(y)+\langle G(y), x-y\rangle +M\varphi(\|x-y\|) \geq f(z)+\langle G(z), x-z\rangle -M\varphi(\|x-z\|) \eqno(mg^{1, \omega})
$$
for all $y, z\in E$ and all $x\in X$.}
\end{definition}

Thus $(f, G)$ satisfies $(mg^{1, \omega})$ for some $M>0$ if and only if $A(f, G)<\infty,$ and $A(f,G)$ is precisely the smallest $M$ for which $(f,G)$ satisfies $(mg^{1, \omega})$ with constant $M.$ Condition $(mg^{1, \omega})$ is half-intrinsic and half-extrinsic (in what refers to points $x\in X$), as opposed to $(W^{1,\omega})$, which is completely intrinsic (it only concerns points $y, z\in E$). In principle condition $(W^{1, \omega})$ should be easier to check, but conditions like $(mg^{1, \omega})$ may also appear very naturally in some applications (see, for instance, the paper \cite{AzagraHajlasz} in the convex setting). Anyhow both conditions are useful and in fact they are equivalent up to an absolute factor; see Proposition \ref{equivalence (mg)(W)} below. In the case of a nonlinear modulus of continuity $\omega$, these conditions, though equivalent, are no longer identical. This is due to the fact that the minimization of the function
$$
V_M(x):= f(y)+\langle G(y), x-y\rangle -f(z)-\langle G(z), x-z\rangle +M\varphi\left( |x-z|\right) +M\varphi\left(|x-y|\right)
$$
leads us in this case to rather perplexing equations which are difficult to handle and solve. Therefore a condition of the type $V_{M}(x_M)\geq 0$ would be much more complicated than $(W^{1, \omega})$. 

With this {\em extrinsic} condition we have the following.

\begin{theorem}\label{Mainthm2}
Let $E$ be a nonempty subset of a Hilbert space $X$, and $f: E \to \R, \: G:E \to X$ two functions. There exists $F \in C^{1,\omega}(X)$ such that $(F,\nabla F)=(f,G)$ on $E$ if and only $(f,G)$ satisfies $(mg)^{1, \omega}$ for some $M>0$. Moreover, we can arrange $M_\omega(\nabla F) \leq (16/\sqrt{15}) M$.
\end{theorem}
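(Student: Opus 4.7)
The plan is to dispose of the necessity direction by a direct calculation and to reduce the sufficiency direction to Theorem \ref{Mainthm1} by invoking the quantitative equivalence between $(mg^{1,\omega})$ and $(W^{1,\omega})$ that Proposition \ref{equivalence (mg)(W)} provides.

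For necessity, I take $F\in C^{1,\omega}(X)$ with $(F,\nabla F)=(f,G)$ on $E$ and set $M=M_\omega(\nabla F)$. Combining the fundamental theorem of calculus
$$
F(x)-F(w)-\langle\nabla F(w),x-w\rangle=\int_0^1\langle\nabla F(w+t(x-w))-\nabla F(w),\,x-w\rangle\,dt
$$
with the pointwise bound $\|\nabla F(w+t(x-w))-\nabla F(w)\|\leq M\omega(t\|x-w\|)$ and the change of variable $s=t\|x-w\|$ yields $|F(x)-F(w)-\langle\nabla F(w),x-w\rangle|\leq M\varphi(\|x-w\|)$. Applying this first with $w=y$ as an upper bound for $F(x)$ and then with $w=z$ as a lower bound, and chaining the two inequalities, delivers $(mg^{1,\omega})$ for $(f,G)$ with the same constant $M$.

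For sufficiency, assume $(f,G)$ satisfies $(mg^{1,\omega})$ with constant $M$ on $E$. Proposition \ref{equivalence (mg)(W)} will (in the Hilbert setting) improve this to condition $(W^{1,\omega})$ on $E$ with constant $M/\sqrt{5}$. Plugging this into Theorem \ref{Mainthm1} produces $F\in C^{1,\omega}(X)$ extending $(f,G)$ with
$$
M_\omega(\nabla F)\leq\frac{16}{\sqrt{3}}\cdot\frac{M}{\sqrt{5}}=\frac{16}{\sqrt{15}}\,M,
$$
which is the stated bound.

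The real work is thus concealed in the passage $(mg^{1,\omega})\Rightarrow(W^{1,\omega})$ with the sharp factor $1/\sqrt{5}$, which I expect to be the main obstacle. In the linear case $\omega(t)=t$ one minimizes the quadratic
$$
V_M(x)=f(y)+\langle G(y),x-y\rangle-f(z)-\langle G(z),x-z\rangle+M\varphi(\|x-y\|)+M\varphi(\|x-z\|)
$$
explicitly and reads off Wells's condition from $V_M(x_M)\geq 0$. For a nonlinear $\omega$ the stationary equation $\nabla V_M(x)=0$ is not solvable in closed form, so I would instead test $(mg^{1,\omega})$ at a carefully chosen point of the form $x=\tfrac{y+z}{2}+h$ with $h$ parallel to $-(G(y)-G(z))$, exploit the Hilbertian parallelogram identity to estimate the sum $\varphi(\|x-y\|)+\varphi(\|x-z\|)$ separately in $\|z-y\|$ and $\|h\|$, and then apply the Young inequality $st\leq\varphi(s)+\varphi^*(t)$ (together with $\varphi^*(t)=\int_0^t\omega^{-1}$) to generate the Fenchel correction term $-2M'\varphi^*(\|G(y)-G(z)\|/2M')$ required by $(W^{1,\omega})$ with $M'=M/\sqrt{5}$.
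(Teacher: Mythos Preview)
Your reduction has a genuine gap in the constants. The passage $(mg^{1,\omega})$ with constant $M$ $\Rightarrow$ $(W^{1,\omega})$ with constant $M/\sqrt{5}$ that you attribute to Proposition \ref{equivalence (mg)(W)} is false: that proposition gives constant $M$ in the Hilbert case, and this cannot be improved. To see the obstruction, take $\omega(t)=t$. Then $\varphi(t)=\varphi^*(t)=t^2/2$, and the minimization of $V_M$ you mention shows that $(mg^{1,\omega})$ with constant $M$ and $(W^{1,\omega})$ with constant $M$ are \emph{identical} conditions---no factor strictly smaller than $1$ is possible. Your proposed route via Theorem \ref{Mainthm1} therefore yields at best $M_\omega(\nabla F)\leq (16/\sqrt{3})M$, missing the target $(16/\sqrt{15})M$ by a factor of $\sqrt{5}$.

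The paper does not go through Theorem \ref{Mainthm1} at all for this result. It first proves Theorem \ref{MainthmOptimalmg} directly from $(mg^{1,\omega})$: the functions $\psi_y^{\pm}(x)=f(y)+\langle G(y),x-y\rangle\pm M\varphi(|x-y|)$ feed into the paraconvex-envelope construction of Theorem \ref{MainTechnicalThm}, producing an extension $F$ with both $F$ and $-F$ strongly $2M\varphi$-paraconvex, hence $A(F,\nabla F)\leq 2M$. The sharp constant then comes not from the $(mg)\to(W)$ passage but from part (3) of Proposition \ref{equivalence (mg)(W)}, which in the Hilbert setting bounds $M_\omega(G)$ directly by $(8/\sqrt{15})$ times the $(mg^{1,\omega})$ constant (the argument tests at $x=\tfrac{1}{2}(y+z)+v$, uses the concavity of $t\mapsto\varphi(\sqrt{t})$ together with the parallelogram identity, and optimizes over $|v|$). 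Applying this to $(F,\nabla F)$ with constant $2M$ gives $M_\omega(\nabla F)\leq (8/\sqrt{15})\cdot 2M=(16/\sqrt{15})M$. So the $\sqrt{5}$ you were hoping to extract from the $(mg)\to(W)$ implication actually lives in the $(mg)\to M_\omega$ estimate, and the paper uses it there.
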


The proof of the preceding theorem also gives us the following nearly optimal result.

\begin{theorem}\label{MainthmOptimalmg}
A $1$-jet $(f, G)$ defined on a nonempty subset $E$ of a Hilbert space $X$ has an extension $(F, \nabla F)$ with $F\in C^{1, \omega}(X)$ if and only if $A(f, G)<\infty$. Moreover, we can take $F$ such that
$$
A(F, \nabla F)\leq 2 A(f, G),
$$
where $A(f, G)$ is defined by \eqref{Our optimal condition}. 
\end{theorem}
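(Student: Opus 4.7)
The inequality $A(f, G) \leq A(F, \nabla F)$ is immediate by restriction of the supremum. To see that $A(F, \nabla F) < \infty$ for every $F \in C^{1,\omega}(X)$, integrate $\nabla F$ along the segment $[y,x]$ to obtain the pointwise Taylor estimate $|F(x) - F(y) - \langle \nabla F(y), x - y\rangle| \leq M_\omega(\nabla F)\,\varphi(\|x-y\|)$, and then write the numerator in $A(F, \nabla F)$ as the difference of two such errors at the auxiliary point $x$. This disposes of the ``only if'' direction; the substance of the theorem is therefore the construction of an extension achieving $A(F, \nabla F) \leq 2A(f, G)$ whenever $A(f, G) < \infty$.

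For the ``if'' direction, set $A := A(f, G)$, which is equivalent to $(f, G)$ satisfying $(mg^{1, \omega})$ on $E$ with constant $A$. I would reuse the explicit extension produced in the proof of Theorem \ref{Mainthm2}, namely the half-sum $F = \tfrac{1}{2}(\overline F + \underline F)$ of the dual convolutions
$$
\overline F(x) = \inf_{y\in E}\{f(y) + \langle G(y), x-y\rangle + A\varphi(\|x-y\|)\}, \quad \underline F(x) = \sup_{z\in E}\{f(z) + \langle G(z), x-z\rangle - A\varphi(\|x-z\|)\}.
$$
Condition $(mg^{1,\omega})$ forces $\underline F \leq \overline F$ on $X$, and $\overline F = \underline F = f$ on $E$ (take $y = z = x$). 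The true content of Theorem \ref{Mainthm2}'s proof, which I would import verbatim and which is where the Hilbert-space structure is used in an essential way, is to show that $\overline F$ and $\underline F$ are Fr\'echet differentiable on all of $X$ with explicit formulas for their gradients in terms of (near-)extremizers in $E$. This immediately gives $F \in C^1(X)$ with $\nabla F = \tfrac{1}{2}(\nabla \overline F + \nabla \underline F)$.

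The improvement from the a priori bound $A(F, \nabla F) \leq M_\omega(\nabla F) \leq (16/\sqrt{15})A$ to $A(F, \nabla F) \leq 2A$ is obtained by estimating the jet directly rather than passing through a uniform bound on $\nabla F$. Given arbitrary $x, x_1, x_2 \in X$, pick near-extremizers $y_1,y_2,z_1,z_2 \in E$ realizing $\overline F(x_i), \underline F(x_i)$ for $i = 1, 2$. The explicit gradient formulas express each of $\overline F(x_i) + \langle \nabla \overline F(x_i), x - x_i\rangle$ and $\underline F(x_i) + \langle \nabla \underline F(x_i), x - x_i\rangle$ as a first-order Taylor expansion at $x_i$ of one of the perturbed affine functions $f(\cdot) + \langle G(\cdot), x - \cdot\rangle \pm A\varphi(\|x - \cdot\|)$. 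Exploiting the inner-product identity $\|x - y\|^2 = \|x_i - y\|^2 + 2\langle x_i - y, x - x_i\rangle + \|x - x_i\|^2$, together with a short convexity argument handling the concavity of $\omega$, bounds each such Taylor error of the radial penalty by $A\varphi(\|x - x_i\|)$. Combining the four resulting one-sided inequalities with the $(mg^{1, \omega})$ condition on $(f, G)$ applied at the pair of extremizers and the auxiliary point $x$ yields
$$
\big|F(x_1) + \langle \nabla F(x_1), x - x_1\rangle - F(x_2) - \langle \nabla F(x_2), x - x_2\rangle\big| \leq 2A\big(\varphi(\|x - x_1\|) + \varphi(\|x - x_2\|)\big),
$$
which is exactly $A(F, \nabla F) \leq 2A(f, G)$. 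The main obstacle is the Hilbert-space-dependent differentiability analysis of $\overline F$ and $\underline F$, which is already the deepest step in the proof of Theorem \ref{Mainthm2}; once this is in hand, the sharpening of the constant to $2$ is a bookkeeping exercise that targets the $A$-seminorm directly instead of bounding $\|\nabla F(x) - \nabla F(x')\|$ globally.
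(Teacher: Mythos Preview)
Your proposal rests on a construction that is not the one in the paper and that in fact fails to produce a $C^{1}$ function. You set $F=\tfrac{1}{2}(\overline F+\underline F)$, where $\overline F=g$ and $\underline F=m$ in the paper's notation, and you assume that the proof of Theorem \ref{Mainthm2} establishes the Fr\'echet differentiability of $g$ and $m$ separately. It does not: $m$ is a pointwise supremum and $g$ a pointwise infimum of smooth functions, and such envelopes typically have corners. The paper's extension is the strongly $2M\varphi$-paraconvex envelope
\[
F(x)=\sup\{h(x):h\leq g,\ h\text{ is strongly }2M\varphi\text{-paraconvex}\},
\]
and the regularity of $F$ comes from the fact that both $F$ and $-F$ are strongly $2M\varphi$-paraconvex (Theorem \ref{MainTechnicalThm}), whence Proposition \ref{if u and -u are strongly paraconvex then u is unoformly differentiable} yields $A(F,\nabla F)\leq 2M$ directly. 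The Hilbert structure enters only through Lemma \ref{constantnormomega}, which gives the paraconvexity constant $2$ for $\pm\varphi\circ|\cdot|$.

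A one-line counterexample shows that your half-sum cannot work, already for $\omega(t)=t$. Take $X=\R$, $E=\{-1,1\}$, $f(\pm1)=0$, $G(\pm1)=\pm1$; then $A(f,G)=1$, $g(x)=\tfrac{1}{2}x^{2}-\tfrac{1}{2}$ and $m(x)=-\tfrac{1}{2}x^{2}+2|x|-\tfrac{3}{2}$, so $\tfrac{1}{2}(g+m)(x)=|x|-1$, which is not differentiable at $0$. Hence there are no ``explicit gradient formulas'' for $\overline F,\underline F$ to import, and the subsequent bookkeeping argument (which in any case invokes the parallelogram identity for $|\cdot|^{2}$ rather than for $\varphi$) never gets started. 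The genuine mechanism behind the constant $2$ is the two-sided paraconvexity of the envelope, not cancellation between $m$ and $g$.
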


For H{\"o}lder moduli of continuity, i.e., when $\omega(t)=t^{\alpha}$ with $\alpha\in (0,1]$, we can improve Theorem \ref{MainthmOptimalmg} as follows.

\begin{theorem}\label{MainthmOptimalmgForSpecialw}
A $1$-jet $(f, G)$ defined on a nonempty subset $E$ of a Hilbert space $X$ has an extension $(F, \nabla F)$ with $F\in C^{1, \alpha}(X)$ if and only if $A(f, G)<\infty$. Moreover, we can take $F$ such that
$$
A(F, \nabla F)\leq 2^{1-\alpha} A(f, G),
$$
where $A(f, G)$ is defined by \eqref{Our optimal condition}. 
\end{theorem}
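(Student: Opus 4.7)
The approach is to revisit the construction and analysis in the proof of Theorem \ref{MainthmOptimalmg} and show that, specifically when $\omega(t)=t^\alpha$, one can sharpen the resulting multiplicative constant from $2$ down to $2^{1-\alpha}$. Set $M:=A(f,G)$, equivalently the smallest constant for which $(f,G)$ satisfies $(mg^{1,\omega})$. Theorem \ref{MainthmOptimalmg} produces, through an explicit construction most likely of the form
$$F(x)=\tfrac12\,g_M(x)+\tfrac12\,h_M(x),$$
with
$$g_M(x)=\inf_{z\in E}\bigl\{f(z)+\langle G(z),x-z\rangle+M\varphi(\|x-z\|)\bigr\},\quad h_M(x)=\sup_{y\in E}\bigl\{f(y)+\langle G(y),x-y\rangle-M\varphi(\|x-y\|)\bigr\},$$
a $C^{1,\omega}$ extension satisfying $A(F,\nabla F)\le 2M$. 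All regularity conclusions (differentiability, $F\equiv f$ and $\nabla F\equiv G$ on $E$, and membership in $C^{1,\alpha}(X)$) for this $F$ are inherited from the proof of \ref{MainthmOptimalmg} without change.

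The H\"older-specific ingredient that enables the refinement is the pair of sharp scalar inequalities
$$\varphi(a)+\varphi(b)\le\varphi(a+b)\le 2^\alpha\bigl(\varphi(a)+\varphi(b)\bigr),\qquad a,b\ge 0,$$
which follow from the superadditivity and the convexity of $r\mapsto r^{1+\alpha}$, together with the homogeneity $\varphi(\lambda t)=\lambda^{1+\alpha}\varphi(t)$. At each step of the proof of \ref{MainthmOptimalmg} where a generic $\omega$-estimate of the form $\varphi(a+b)\le C(\varphi(a)+\varphi(b))$ is invoked to produce the constant $2$, the H\"older inequality above provides the tighter constant $2^\alpha$; propagating this single quantitative improvement through the argument converts $A(F,\nabla F)\le 2M$ into $A(F,\nabla F)\le 2^{1-\alpha}M$.

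The main obstacle is pure bookkeeping. The proof of \ref{MainthmOptimalmg} probably splits into several subcases according to the relative sizes of $\|x-y\|,\|x-z\|,\|y-z\|$ and to the positions of the (near-)optimizing points of $g_M$ and $h_M$ at $y$ and $z$, so one must audit each subcase to verify that the factor $2$ is always replaced by exactly $2^\alpha$ and no additional slack is introduced. As a sanity check, the resulting constant $2^{1-\alpha}$ interpolates correctly between $2$ at $\alpha\to 0^+$ (matching \ref{MainthmOptimalmg}) and $1$ at $\alpha=1$ (matching the sharp $C^{1,1}$ identity $A(F,\nabla F)=A(f,G)$ of \cite{Wells,LeGruyer1}).
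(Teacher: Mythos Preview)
Your proposal rests on two guesses that are both incorrect, so there is a genuine gap.

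First, the construction in Theorem~\ref{MainthmOptimalmg} is \emph{not} the midpoint $F=\tfrac12(g_M+h_M)$. Neither $g_M$ nor $h_M$ is differentiable in general (one is an infimum of smooth functions, the other a supremum), and their average inherits the kinks; so this $F$ would typically fail to lie in $C^{1,\omega}(X)$. The paper's extension is instead the \emph{strongly $C\varphi$-paraconvex envelope} of $g_M$:
\[
F(x)=\sup\{h(x)\,:\,h\le g_M,\ h\text{ is strongly }C\varphi\text{-paraconvex}\},
\]
and the whole point of the argument (via Theorem~\ref{MainTechnicalThm} and Proposition~\ref{if u and -u are strongly paraconvex then u is unoformly differentiable}) is that both $F$ and $-F$ end up strongly $C\varphi$-paraconvex, which forces $F\in C^{1,\omega}$ with $A(F,\nabla F)\le C$.

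Second, the constant $2$ in Theorem~\ref{MainthmOptimalmg} does \emph{not} come from an estimate of the type $\varphi(a+b)\le 2(\varphi(a)+\varphi(b))$, so sharpening that scalar inequality to $\varphi(a+b)\le 2^{\alpha}(\varphi(a)+\varphi(b))$ is irrelevant. The constant comes from Lemma~\ref{constantnormomega}, which says that for a general modulus $\omega$ the function $-\varphi(|\cdot|)$ is strongly $2\varphi$-paraconvex; this is what makes the building blocks $\psi_y^{\pm}$ fit into Theorem~\ref{MainTechnicalThm} with $C=2M$. The H\"older-specific improvement is Lemma~\ref{constantnormholder}: when $\omega(t)=t^{\alpha}$, the function $-\varphi(|\cdot|)$ is strongly $2^{1-\alpha}\varphi$-paraconvex. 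One then reruns the exact same construction with $C=2^{1-\alpha}M$ in place of $2M$, and the output of Theorem~\ref{MainTechnicalThm} directly gives $A(F,\nabla F)\le 2^{1-\alpha}M$. There are no subcases on the sizes of $\|x-y\|,\|x-z\|,\|y-z\|$ to audit.
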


Note that in the particular case that $\alpha=1$ this result yields Wells' theorem.

\medskip

According to Theorem \ref{Mainthm2} we always have
\begin{equation}\label{estimation of the trace seminorm in general}
\inf \lbrace M_{\omega}(\nabla H) \, : \, H \in C^{1,\omega}(X) \:\: \text{and} \:\: (H,\nabla H) = (f,G) \:\: \text{on} \:\: E \rbrace \leq (16/\sqrt{15}) A(f, G),
\end{equation}
and, in the special case that $\omega(t)=t^\alpha,$ we will see that this estimate can be improved as follows:
\begin{equation}\label{estimation of the trace seminorm w-1tt concave}
\inf \lbrace M_{\omega}(\nabla H) \, : \, H \in C^{1,\omega}(X) \:\: \text{and} \:\: (H,\nabla H) = (f,G) \:\: \text{on} \:\: E \rbrace \leq \frac{2^{2-2\alpha}}{\sqrt{1+\alpha}} \left( 1+\frac{1}{\alpha}\right)^{\alpha/2} A(f, G).
\end{equation}
On the other hand, for any extension $(H, \nabla H)$ of $(f, G)$ with $H\in C^{1, \omega}(X)$ we always have the trivial estimate
$$
A(f, G)\leq M_{\omega}(\nabla H).
$$

Hence we may conclude the following.
\begin{corollary}\label{Various estimates corollary}
A $1$-jet $(f, G)$ defined on a nonempty subset $E$ of a Hilbert space $X$ has an extension $(F, \nabla F)$ with $F\in C^{1, \omega}(X)$ if and only if $A(f, G)<\infty$, in which case we have
$$
A(f, G)\leq \|(f,G)\|_{E, \omega}\leq (16/\sqrt{15}) A(f, G),
$$
where
$$
\|(f, G)\|_{E, \omega}:= \inf \lbrace M_{\omega}(\nabla H) \, : \, H \in C^{1,\omega}(X) \:\: \text{and} \:\: (H,\nabla H) = (f,G) \:\: \text{on} \:\: E \rbrace
$$
and $A(f, G)$ is defined by \eqref{Our optimal condition}.

Furthermore, if $\omega(t)=t^\alpha$ with $0<\alpha \leq 1,$ then we have
$$
A(f, G)\leq \|(f,G)\|_{E, \omega}\leq \frac{2^{2-2\alpha}}{\sqrt{1+\alpha}} \left( 1+\frac{1}{\alpha}\right)^{\alpha/2} A(f, G).
$$
\end{corollary}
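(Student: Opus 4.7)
The plan is to assemble Corollary \ref{Various estimates corollary} purely from the main results already stated (Theorems \ref{Mainthm2} and \ref{MainthmOptimalmgForSpecialw}, together with estimate \eqref{estimation of the trace seminorm w-1tt concave}) plus one elementary integration argument. No new construction is needed.

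First I would observe that $A(f,G)$ is, by the discussion following the definition of $(mg^{1,\omega})$, precisely the smallest constant $M$ for which $(f,G)$ satisfies $(mg^{1,\omega})$ on $E$. Hence $A(f,G) <\infty$ is equivalent to the existence of some $M>0$ with $(f,G)$ satisfying $(mg^{1,\omega})$, and Theorem \ref{Mainthm2} yields the existence characterization and the bound
$$
\|(f,G)\|_{E, \omega} \leq (16/\sqrt{15})\, A(f,G),
$$
obtained by applying the theorem with $M=A(f,G)$ and then taking the infimum over admissible extensions. In the H\"older case $\omega(t)=t^\alpha$, the same reasoning together with \eqref{estimation of the trace seminorm w-1tt concave} (which is derived in the text from Theorem \ref{MainthmOptimalmgForSpecialw}) produces the sharper constant $\tfrac{2^{2-2\alpha}}{\sqrt{1+\alpha}} (1+1/\alpha)^{\alpha/2}$.

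For the reverse estimate $A(f,G) \leq \|(f,G)\|_{E,\omega}$, I would fix an arbitrary extension $H\in C^{1,\omega}(X)$ with $(H,\nabla H)=(f,G)$ on $E$ and show $A(f,G)\leq M_{\omega}(\nabla H)$ directly. For $y\in E$ and $x\in X$, the fundamental theorem of calculus gives
$$
H(x)-H(y)-\langle \nabla H(y), x-y\rangle = \int_0^1 \langle \nabla H(y+t(x-y))-\nabla H(y),\, x-y\rangle\, dt,
$$
and the $\omega$-continuity of $\nabla H$ together with the definition $\varphi(t)=\int_0^t \omega(s)\,ds$ yields $|H(y)+\langle \nabla H(y), x-y\rangle - H(x)|\leq M_{\omega}(\nabla H)\,\varphi(\|x-y\|)$. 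Writing the corresponding inequality at $z$ and using the triangle inequality bounds the numerator in \eqref{Our optimal condition} by $M_{\omega}(\nabla H)\bigl(\varphi(\|x-y\|)+\varphi(\|x-z\|)\bigr)$. Taking the supremum over $x,y,z$ gives $A(f,G)\leq M_{\omega}(\nabla H)$, and taking the infimum over $H$ yields the claimed lower bound on $\|(f,G)\|_{E,\omega}$.

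All the genuine work sits inside Theorem \ref{Mainthm2} and Theorem \ref{MainthmOptimalmgForSpecialw}; once those are in hand the corollary is organizational. The only step requiring any actual computation is the elementary mean-value estimate relating $A(H,\nabla H)$ to $M_{\omega}(\nabla H)$, and even this is routine since $\omega$ is concave and $\varphi$ is its primitive, so the integrand in the FTC identity is controlled pointwise by $M_{\omega}(\nabla H)\,\omega(t\|x-y\|)\|x-y\|$ and integrates to the desired $\varphi$-expression. I therefore expect no real obstacle; the subtlety of the paper lies entirely upstream of this corollary.
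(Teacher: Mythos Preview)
Your proposal is correct and follows essentially the same route as the paper: the upper bounds come directly from Theorem \ref{Mainthm2} and estimate \eqref{estimation of the trace seminorm w-1tt concave}, while the lower bound $A(f,G)\le M_\omega(\nabla H)$ is exactly the ``trivial estimate'' the paper invokes from the introduction (and proves via Taylor's theorem in the necessity part of Theorem \ref{MainthmOptimalmg}). Your FTC computation is precisely that argument, so nothing further is needed.
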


It should be noted that for every function of class $F \in C^{1,1}(X)$ defined on a Hilbert space, we always have the identity $\lip(\nabla F) =A(F, \nabla F),$ but this is no longer true for the class $C^{1,\omega}.$ For instance, it is easy to see that the function $f(x)= \frac{2}{3} |x| ^{3/2}, \, x\in \R,$ satisfies $M_\omega(f') = \sqrt{2}$ for $\omega(t)=t^{1/2}.$ However, one can check that $A(f,f') < \sqrt{2}.$ Indeed, since $f$ is $C^1(\R),$ it is clear that 
$$
A(f,f')= \sup_{x,y\in \R, \, x \neq y} \frac{|f(x)-f(y)-f'(y)(x-y)|}{\varphi(|x-y|)} \, , \quad \text{where} \quad \varphi(t)= \tfrac{2}{3} t^{3/2}. 
$$
This supremum is attained at couples of points $(x,y)$ with $x<0<y,$ and, using the homogeneity of $f$ and $f',$ it is not difficult to check that it is equal to
$$
\sup_{t>0} \frac{t^{3/2}+3t^{1/2}+2}{2(t+1)^{3/2}} \leq 1,3066.
$$

\medskip

We will also prove that Theorem \ref{Mainthm2} extends to the class of superreflexive spaces: if $X$ is such a Banach space, thanks to Pisier's results (see \cite[Theorem 3.1]{Pisier}), we can find an equivalent norm $\|\cdot\|$ in $X$ such that may assume that the norm $\| \cdot \|$ is uniformly smooth with modulus of smoothness of power type $p=1+\alpha$ for some $0<\alpha \leq 1$. Hence there exists a constant $C > 0$, depending only on this norm, such that 
\begin{equation}\label{modulussmoothnesssuperreflexive}
 \lambda \| x \|^{1+\alpha} + (1-\lambda) \| y\|^{1+\alpha}- \| \lambda x+(1-\lambda)y\|^{1+\alpha} \leq \lambda (1-\lambda) 2^{-\alpha} C \| x-y\|^{1+\alpha} 
\end{equation}
for all $x,y\in X$, $\lambda \in [0,1]$. 
In particular, we have 
\begin{equation}\label{modulussmoothnesssuperreflexivemidpoints}
 \| u+h \|^{1+\alpha} +  \| u-h\|^{1+\alpha}- 2 \| u\|^{1+\alpha} \leq  C \| h\|^{1+\alpha} \quad \text{for all} \quad u,h\in X.
\end{equation}
We will consider modulus of continuity $\omega$ such that the function $t \mapsto t^\alpha/\omega(t)$ is non-decreasing, which includes the cases $\omega(t)=t^\beta,$ with $\beta \leq \alpha.$ We will then show that an inequality similar to \eqref{modulussmoothnesssuperreflexive} holds true with $\psi_\omega = \varphi_\omega \circ \| \cdot \|$ instead of $\| \cdot \|^{1+\alpha}$, where $\varphi_\omega(t)=\int_0^t \omega(s) ds.$ As a consequence, we will obtain the following theorem in terms of conditions $(mg^{1,\omega}).$ 

\begin{theorem}\label{Mainthm3mgSuperreflexive}
Let $X$ be a superreflexive Banach space with an equivalent norm $\| \cdot \|$ satisfying \eqref{modulussmoothnesssuperreflexive} and let $\omega$ be a modulus of continuity such that $t \mapsto t^\alpha/ \omega(t)$ is non-decreasing. Let $E \subset X$ be a nonempty subset and $f: E \to \R, \: G:E \to X^{*}$ two functions. There exists $F \in C^{1,\omega}(X)$ such that $(F, DF)=(f,G)$ on $E$ if and only $(f,G)$ satisfies $(mg^{1, \omega})$ for some $M>0$. Moreover, we can arrange that $M_\omega(DF) \leq 3 \left(1+  \tfrac{ 3^{1+\alpha}}{1+\alpha} C\right) M.$
\end{theorem}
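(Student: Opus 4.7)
The plan is to adapt the proof of Theorem \ref{Mainthm2} to the superreflexive setting, with the modulus of smoothness inequality \eqref{modulussmoothnesssuperreflexive} playing the role that the parallelogram identity plays in Hilbert space. The necessity direction is routine: if $F\in C^{1,\omega}(X)$ extends $(f,G)$, then the mean value theorem (cf.\ the discussion around Proposition \ref{equivalence (mg)(W)}) shows that $(mg^{1,\omega})$ holds with $M = A(F,DF) \le M_\omega(DF)$. For sufficiency, I would use the following explicit formula. Define, for $x\in X$,
\[
F_-(x) := \sup_{y\in E}\bigl[\, f(y) + \langle G(y), x-y\rangle - M\varphi(\|x-y\|)\,\bigr],
\]
\[
F_+(x) := \inf_{z\in E}\bigl[\, f(z) + \langle G(z), x-z\rangle + M\varphi(\|x-z\|)\,\bigr],
\]
and set $F := \tfrac{1}{2}(F_- + F_+)$. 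Condition $(mg^{1,\omega})$ yields $F_-\le F_+$ on $X$, and evaluating at $x\in E$ (taking $y=z=x$ in the extrema) gives $F_-(x) = f(x) = F_+(x)$; hence $F = f$ on $E$.

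Next I would verify that $F$ is Fr\'echet differentiable on $X$ with $DF = G$ on $E$. Because $\omega(0) = 0$, each of the functions $x\mapsto f(y) + \langle G(y), x-y\rangle \pm M\varphi(\|x-y\|)$ is differentiable at $x = y$ with derivative $G(y)$, and the sandwich $F_-\le F\le F_+$ together with $F_-(y) = F_+(y) = f(y)$ forces $DF(y) = G(y)$. Differentiability at an arbitrary $x\in X$ follows from a standard duality and approximate-extremizer argument, exploiting the proximal structure of $F_\pm$.

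The hard part is to control $M_\omega(DF)$. I would split this in two steps. \textbf{Step A} is to promote the midpoint smoothness inequality \eqref{modulussmoothnesssuperreflexivemidpoints} from $\|\cdot\|^{1+\alpha}$ to $\psi := \varphi\circ\|\cdot\|$, i.e.\ establish
\[
\psi(u+h) + \psi(u-h) - 2\psi(u) \le K\,\varphi(\|h\|) \qquad (u,h\in X)
\]
for some explicit $K = K(\alpha,C)$. This is exactly where the hypothesis that $t\mapsto t^\alpha/\omega(t)$ is non-decreasing enters: it enables a change-of-variable and integration argument that transfers the power-type bound from $t^{1+\alpha}$ to $\varphi$. \textbf{Step B} is to feed this into the differentiation estimate: for $x_1, x_2\in X$ and a test vector $v$, I would bound $\langle DF(x_1) - DF(x_2), v\rangle$ from above by choosing approximate extremizers $y$ and $z$ in the definitions of $F_\pm$ at the relevant evaluation points and applying Step A with $u = \tfrac{1}{2}(x_1 + x_2)$ and $h$ proportional to $x_2 - x_1$.

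The main obstacle is the quantitative bookkeeping to reach the explicit constant $3\bigl(1 + \tfrac{3^{1+\alpha}}{1+\alpha}C\bigr)$. The factor $3^{1+\alpha}$ strongly suggests that Step B involves a dilation of $x_2 - x_1$ by a factor $3$ (natural in view of the three-point structure of $(mg^{1,\omega})$), while the $1/(1+\alpha)$ comes from integrating the power-type bound $\omega(t) \asymp t^\alpha$ up to the dilated scale. Tracking these constants carefully through Steps A and B is the delicate part; the mere existence of some bound $M_\omega(DF) \le K'(\alpha,C)\,M$ is a direct consequence of the structural argument, and matches the Hilbert-space analogue in the limit $\alpha = 1$, $C = 1$.
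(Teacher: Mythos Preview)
Your proposal has a genuine gap: the formula $F=\tfrac12(F_-+F_+)$ does not work. The function $F_-$ (the paper's $m$) is strongly $C^*M\varphi$-paraconvex but in general not strongly paraconcave, and symmetrically for $F_+$ (the paper's $g$); each typically has corners away from $E$ wherever the extremizer switches, and these corners do not cancel in the average except in symmetric toy cases. So the sentence ``Differentiability at an arbitrary $x\in X$ follows from a standard duality and approximate-extremizer argument'' is where the argument breaks: there is no such standard argument for the plain average of a paraconvex and a paraconcave function. The paper instead takes $F$ to be the \emph{strongly $C^*M\varphi$-paraconvex envelope} of $g$,
\[
F(x)=\sup\{\,h(x)\,:\,h\le g,\ h\ \text{strongly }C^*M\varphi\text{-paraconvex}\,\},
\]
and the key lemma (Theorem \ref{MainTechnicalThm}) is that \emph{both} $F$ and $-F$ are then strongly $C^*M\varphi$-paraconvex; this two-sided paraconvexity is exactly what feeds into Proposition \ref{if u and -u are strongly paraconvex then u is unoformly differentiable} to give $F\in C^{1,\omega}$ with $M_\omega(DF)\le 3C^*M$.

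Your Step A is in the right spirit, but the paper carries it out differently and this matters for the constant. Rather than promoting only the midpoint inequality, the paper first uses \eqref{modulussmoothnesssuperreflexive} together with Proposition \ref{if u and -u are strongly paraconvex then u is unoformly differentiable} to get $\|D\psi_\alpha(x)-D\psi_\alpha(y)\|_*\le \frac{2^{-2\alpha}3^{1+\alpha}C}{1+\alpha}\|x-y\|^\alpha$ for $\psi_\alpha=\frac{1}{1+\alpha}\|\cdot\|^{1+\alpha}$; it is precisely the $\tfrac32$ in that proposition's estimate $\frac{2C\varphi(\frac32\|x-y\|)}{\|x-y\|}$ that produces the $3^{1+\alpha}$, not a three-point dilation as you guessed. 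Then the monotonicity of $t^\alpha/\omega(t)$ is used to pass from $D\psi_\alpha$ to $D\psi_\omega$, giving $M_\omega(D\psi_\omega)\le 1+\tfrac{3^{1+\alpha}}{1+\alpha}C=:C^*$, and an integration in $\lambda$ yields the full paraconvexity inequality for $-\psi_\omega$ with constant $C^*$. The final factor $3$ in $3C^*M$ comes from a second application of Proposition \ref{if u and -u are strongly paraconvex then u is unoformly differentiable} to the envelope $F$, not from Step B as you outlined.
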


And if we consider the intrinsic conditions $(W^{1,\omega})$ we have the following.

\begin{theorem}\label{Mainthm4mgSuperreflexive}
Let $X$ be a superreflexive Banach space with an equivalent norm $\| \cdot \|$ satisfying \eqref{modulussmoothnesssuperreflexive} and let $\omega$ be a modulus of continuity such that $t \mapsto t^\alpha/ \omega(t)$ is non-decreasing. Let $E \subset X$ be a nonempty subset and $f: E \to \R, \: G:E \to X^{*}$ two functions. There exists $F \in C^{1,\omega}(X)$ such that $(F, DF)=(f,G)$ on $E$ if and only $(f,G)$ satisfies $(W^{1, \omega})$ for some $M>0$. Moreover, we can arrange that $M_\omega(DF) \leq 12 \left(1+  \tfrac{ 3^{1+\alpha}}{1+\alpha} C\right) M.$ 
\end{theorem}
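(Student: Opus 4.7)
The necessity of condition $(W^{1,\omega})$ is immediate: by Proposition \ref{equivalence (mg)(W)}(2), any $F\in C^{1,\omega}(X)$ gives a jet $(F,DF)$ that satisfies $(W^{1,\omega})$ on $X$, and hence on $E$, with constant $M_\omega(DF)$. So the entire work lies on the sufficiency side.

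My strategy is to reduce Theorem \ref{Mainthm4mgSuperreflexive} to Theorem \ref{Mainthm3mgSuperreflexive} by establishing the following implication: if $(f,G)$ satisfies $(W^{1,\omega})$ on $E$ with constant $M$, then it satisfies $(mg^{1,\omega})$ on $E$ with constant $4M$. This is one half of Proposition \ref{equivalence (mg)(W)}. Once it is in place, Theorem \ref{Mainthm3mgSuperreflexive} will produce an extension $F\in C^{1,\omega}(X)$ of $(f,G)$ with
$$
M_\omega(DF)\;\leq\;3\Bigl(1+\tfrac{3^{1+\alpha}}{1+\alpha}C\Bigr)\cdot 4M\;=\;12\Bigl(1+\tfrac{3^{1+\alpha}}{1+\alpha}C\Bigr)M,
$$
which is the announced bound. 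Note that the superreflexive structure of $X$ plays no role in the reduction itself; it enters only through the appeal to Theorem \ref{Mainthm3mgSuperreflexive}.

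The core of the reduction is the algebraic identity
$$
\tfrac{1}{2}\langle G(y)+G(z), z-y\rangle + \langle G(z), x-z\rangle - \langle G(y), x-y\rangle \;=\; \langle G(z)-G(y),\, x-\tfrac{y+z}{2}\rangle,
$$
which converts the extrinsic quantity appearing in $(mg^{1,\omega})$ into the intrinsic one that appears in $(W^{1,\omega})$. Applying $(W^{1,\omega})$, bounding the cross term $\langle G(z)-G(y), x-\tfrac{y+z}{2}\rangle$ by Cauchy--Schwarz together with $\|x-\tfrac{y+z}{2}\|\leq\tfrac{1}{2}(\|x-y\|+\|x-z\|)$, and applying the scaled Young inequality $st\leq M\varphi(s)+M\varphi^*(t/M)$ to each of the two resulting products, the $\varphi^*$ terms cancel telescopically and what remains is
$$
f(z)-f(y)+\langle G(z), x-z\rangle - \langle G(y), x-y\rangle \;\leq\; M\bigl[\varphi(\|x-y\|)+\varphi(\|x-z\|)+\varphi(\|y-z\|)\bigr].
$$
The main technical step --- and where I expect most of the care to be needed in order to secure the sharp factor $4$ --- is the absorption of $\varphi(\|y-z\|)$ into the right-hand side through the subadditive-type estimate $\varphi(a+b)\leq 3\varphi(a)+3\varphi(b)$, applied with $a=\|x-y\|$, $b=\|x-z\|$ and the triangle inequality $\|y-z\|\leq a+b$. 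This subadditive estimate is a purely one-dimensional consequence of the concavity of $\omega$: the chord inequality $\omega(s)\geq (s/a)\omega(a)$ on $[0,a]$ yields $a\omega(a)\leq 2\varphi(a)$, and combining this with $\omega(2t)\leq 2\omega(t)$, Young's inequality, and the identity $\varphi(t)+\varphi^*(\omega(t))=t\omega(t)$ produces the bound $\varphi(a+b)\leq 3\varphi(\max\{a,b\})+2\varphi(\min\{a,b\})$, which is enough. Plugging this in gives $(mg^{1,\omega})$ with constant $4M$ and closes the argument.
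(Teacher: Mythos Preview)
Your proof is correct and follows the same route as the paper: reduce $(W^{1,\omega})$ with constant $M$ to $(mg^{1,\omega})$ with constant $4M$ (this is exactly the content of Proposition~\ref{equivalence (mg)(W)}(1), which you re-derive with a slightly different arrangement of the Fenchel--Young step), and then apply Theorem~\ref{Mainthm3mgSuperreflexive}. One small slip: in the necessity direction, Proposition~\ref{equivalence (mg)(W)}(2) in a general Banach space only yields $(W^{1,\omega})$ with constant $2M_\omega(DF)$ rather than $M_\omega(DF)$, but this has no bearing on the statement.
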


It is worth noting that the proofs of Theorems \ref{Mainthm3mgSuperreflexive} and \ref{Mainthm4mgSuperreflexive} show that the sufficiency parts of these results still hold true for moduli $\omega$ not necessarily satisfying that the function $t \mapsto t^\alpha/\omega(t)$ is non-decreasing, if we only assume that the function $\psi_{\omega}:=\varphi_{\omega}\circ\|\cdot\|$ is of class $C^{1, \omega}$. However, such an assumption implies superreflexivity of the space $X$ (see \cite[Theorem V.3.2]{DevilleGodefroyZizler}), hence also the existence of an equivalent norm with modulus of smoothness of power type $p=1+\alpha$ for some $\alpha\in (0, 1]$. Let us also mention that in \cite[Section 6]{AzagraLeGruyerMudarraExplicitFormulas} it was shown that a necessary condition on a Banach space $X$ for the validity of a Whitney-type extension theorem in $X$ for some class $C^{1,\omega}$ is that $X$ is superreflexive.

\medskip

Let us finish this introduction by making a few comments on our method of proof and honoring the title of this paper (where we promised some formulas). If one tries to adapt the proof of Wells' theorem given in \cite{AzagraLeGruyerMudarraExplicitFormulas} to the $C^{1, \omega}$ situation, one sees that the argument breaks down for the following reason: when $\omega(t)$ is not linear, it is no longer true that a function $u$ is of class $C^{1, \omega}$ if and only if there exists a convex function $\psi$ of class $C^{1, \omega}$ such that $u+\psi$ is convex and $u-\psi$ is concave. As it turns out, the appropriate class of functions for tackling this more general problem seems to be not that of convex functions, but that of {\em strongly $\varphi$-paraconvex} functions, see Definition \ref{definition of strongly paraconvex} below. 

The main ideas of the proof of Theorem \ref{MainthmOptimalmg} are the following: if $A(f, G)<\infty$ then the functions
\begin{align*}
m(x)= \sup_{z\in E} \lbrace f(z)+\langle G(z),x-z \rangle - M \varphi(|x-z|) \rbrace, \quad x\in X \\
g(x)= \inf_{y\in E} \lbrace f(y)+\langle G(y),x-y \rangle + M \varphi(|x-y|) \rbrace, \quad x\in X
\end{align*}
are well defined and satisfy
$$
m(x)\leq  g(x) \textrm{ for all } x\in X, \textrm{ and } m(y)=g(y)=f(y) \textrm{ for all } y\in E.
$$
Then one can check that the functions $m$ and $(-g)$ are strongly $2M \varphi$-paraconvex and define $F:X \to \R$ by
\begin{equation}\label{first definition of F}
F(x):= \sup\lbrace h(x) \, : \, h \leq g \: \textrm{ and } \: h \: \textrm{ is strongly } 2M \varphi\text{-paraconvex} \rbrace, \quad x\in X.
\end{equation}
One may call $F$ the {\em $2M \varphi$-strongly paraconvex envelope} of $g$. 
As we will show, both $F$ and $-F$ are strongly $2M \varphi$-paraconvex, and this implies that $F\in C^{1,\omega}(X)$ with
$
A(F, \nabla F)\leq 2 A(f, G),
$
and that $\nabla F=G$ on $E$. 

It is also worth noting that, in the very particular case $\omega(t)=t$, one can also define $F$ above with $2$ replaced with $1$. In this special case, another expression for $F$ is the following: for each $t\in\R, p\in X$, $\xi\in X^{*}$, set
\begin{equation}\label{definition of Hptxi}
H_{p, t, \xi}(z) :=t+\langle\xi, z-p\rangle -\frac{M}{2}|z-p|^2.
\end{equation}
Then we have
\begin{equation}\label{second definition of F}
F(x)=\sup\left\{ H_{p, t, \xi}(x) \, : \, t\in\R,\: p\in X, \: \xi\in X^{*}, \:  H_{p, t, \xi}(z)\leq g(z) \: \textrm{ for all } \: z\in X\right\}; 
\end{equation}
see Lemma \ref{another sup formula for F} below.
From this formula we can see that, in the case that $E$ is finite, say that $E$ has $m$ points, then for each $x\in X$, $F(x)$ can be computed by solving a maximization problem in $\R\times X\times X^{*}$ with $m$ constraints, where the function to be maximized and the constraining functions are linear combinations of bilinear functions and quadratic functions. Hence the computation of $F(x)$ is much easier than in the general case of a nonlinear modulus $\omega$. 

When $\omega(t)$ is not necessarily linear, we may also provide an alternate formula for an admissible extension $F$ of $(f, G)$ as the supremum of a smaller family of functions than that of \eqref{first definition of F}: given a $1$-jet $(f, G):E\to\R\times X$ such that $M:=A(f, G)<\infty$, let us define
$$
\mathcal{F} := \left\lbrace X \ni z \mapsto a+ \langle \xi, z \rangle - \sum_{i=1}^n \lambda_i M \varphi(|z-p_i|) \, : \, a\in \R,\, \xi \in X^*, \, p_i\in X, \, \lambda_i \geq 0,\, \sum_{i=1}^n \lambda_i=1, \, n\in \N \right\rbrace,
$$
and
\begin{equation}\label{third definition of F}
F(x) :=\sup\lbrace h(x) \, : \, h\in \mathcal{F}, \: h \leq g \rbrace.
\end{equation}
Then $F$ is of class $C^{1, \omega}(X)$ and satisfies $(F, \nabla F)=(f, G)$ on $E$, with
$$
A(F, \nabla F)\leq 2 A(f, G).
$$
In the case that $\omega(t)$ is linear, it is easily seen that this extension $F$ coincides with \eqref{second definition of F}, and also with $\textrm{conv}(g+ \psi) - \psi;$ where $\psi= \frac{M}{2} | \cdot |^2$ and $\textrm{conv}(g+ \psi)$ denotes the convex envelope of $g+ \psi,$ that is, the supremum of all lower semicontinuous convex functions lying below $g+ \psi.$ This is a consequence of the fact that a function $h: X \to \R$ is strongly $\varphi$-paraconvex if and only if $h+ \psi$ is convex; where $\varphi(t)= \frac{M}{2}t^2$ (however, this is no longer true for nonlinear moduli of continuity).

\medskip

These results will all be shown in Section \ref{sectionproofmainresults} below. In Sections \ref{sectionboundedcase}, \ref{sectionlipschitzcase}  we will give some variants of our techniques which will allow us to establish similar results for the subclasses of $C^{1, \omega}(X)$ consisting of bounded and/or Lipschitz functions, and also a certain continuous dependence of the extensions on the initial data, meaning that if a sequence $\{(f_n, G_n)\}_{n\in\N}$ of jets converges uniformly on $E$ to a jet $(f, G)$ then the corresponding extensions satisfy that $\lim_{n\to\infty}(F_n, \nabla F_n)= (F, \nabla F)$ uniformly on $X$. Finally, in Section \ref{sectionc1ub} we will consider the class $C^{1, u}_{\mathcal{B}}(X)$ of differentiable functions whose derivatives are uniformly continuous on  bounded subsets of $X$, and we will show the following result: suppose that the jet $(f, G)$ is bounded on each bounded subset of $E$; then there exists $F\in C^{1, u}_{\mathcal{B}}(X) $ with $(F, \nabla F)_{|_E}=(f, G)$ if and only if, for all bounded sequences $(x_n)\subset X$, $(y_n), (z_n)\subset E$ with  $|x_n-y_n|+|x_n-z_n|>0$,
$$
\lim_{n\to\infty} \left( |x_n-y_n|+|x_n-z_n| \right)=0 \implies \lim_{n\to\infty}\frac{f(y_n)+\langle G(y_n), x_n-y_n\rangle-f(z_n)-\langle G(z_n), x_n-z_n\rangle}{|x_n-y_n|+|x_n-z_n|}=0.
$$
Also note that, in the particular case $X=\R^n$, we have $C^{1}(\R^n)=C^{1, u}_{\mathcal{B}}(\R^n)$, and this statement is thus equivalent to Whitney's extension theorem for $C^1$. 

\section{Some technical tools}

Recall that the Fenchel conjugate of a function $g$ is denoted by $g^{*}$ and defined as in \eqref{definition of Fenchel conjugate}.

\begin{proposition}\label{elementarypropertiesconjugate} The following properties hold.
\begin{enumerate}
\item $(ag)^*= ag^*( \frac{\cdot}{a})$ and $\left( ag( \frac{\cdot}{a}) \right)^*= a g^*$ for $a>0.$ 
\item  $ab \leq g(a)+g^*(b)$ for $a,b \geq 0.$ 
\item If $(X, \| \cdot \|)$ is a Banach space and $g: \R \to [0,\infty)$ is even, then $\left( g \circ \| \cdot \| \right)^*= g^* \circ \| \cdot \|.$ Here, for a function $\psi:X \to \R,$ we denote $\psi^*(x^*)= \sup_{x\in X} \lbrace \langle x^*, x \rangle - \psi(x) \rbrace$ for every $x^*\in X^*.$
\end{enumerate} 
\end{proposition}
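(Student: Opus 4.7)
The plan is to verify each of the three identities by direct computation from the definition $g^*(t) = \sup_{s \in \R}\{st - g(s)\}$, using elementary substitutions and, for part (3), the definition of the dual norm together with the evenness hypothesis on $g$. None of the parts involves a delicate estimate; the only mildly subtle point will be the role played by evenness in~(3).

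For (1), pulling the positive constant $a$ out of the supremum and rescaling the dual variable gives $(ag)^*(t) = \sup_{s}\{st - ag(s)\} = a\sup_{s}\{s(t/a) - g(s)\} = ag^*(t/a)$. The second identity in (1) is then obtained by the change of variable $u = s/a$: if $h(s) := ag(s/a)$, then $h^*(t) = \sup_s\{st - ag(s/a)\} = \sup_u\{aut - ag(u)\} = ag^*(t)$. Part~(2) is just the Fenchel--Young inequality and is immediate from the definition: for any $a\in\R$ and $b\in\R$, the supremum defining $g^*(b)$ majorises $ab - g(a)$, which rearranges to $ab \leq g(a) + g^*(b)$. (The restriction $a,b\ge 0$ in the statement is irrelevant to the proof and merely reflects how the inequality will later be applied.)

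For (3), the key step will be to reduce the supremum over the whole space $X$ to a supremum over a ray in $\R$. Writing $\psi = g \circ \|\cdot\|$ and splitting the ambient supremum according to the value of $r := \|x\|$, one has
\begin{equation*}
\psi^*(x^*) = \sup_{x \in X}\{\langle x^*, x\rangle - g(\|x\|)\} = \sup_{r \geq 0}\sup_{\|x\| = r}\{\langle x^*, x\rangle - g(r)\} = \sup_{r \geq 0}\{r\|x^*\|_* - g(r)\},
\end{equation*}
where the inner supremum is evaluated via the definition of the dual norm together with the positive homogeneity of $\langle x^*, \cdot\rangle$. It then remains to match this one-sided supremum with $g^*(\|x^*\|_*) = \sup_{t \in \R}\{t\|x^*\|_* - g(t)\}$; here the evenness of $g$ enters, since $\|x^*\|_* \geq 0$ forces $t\|x^*\|_* - g(t) \leq |t|\,\|x^*\|_* - g(|t|)$ for every $t\in\R$, so that the supremum over $\R$ is attained on $[0,\infty)$. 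Assembling these equalities yields $(g \circ \|\cdot\|)^*(x^*) = g^*(\|x^*\|_*)$, as claimed.
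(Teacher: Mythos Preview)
Your proof is correct. The paper does not supply a proof of this proposition at all---it is stated as a collection of elementary facts and left to the reader---so your direct verification from the definition is exactly what one would expect, and each step checks out. The only thing worth noting is a small notational looseness in the statement itself: in part~(3) the right-hand side $g^*\circ\|\cdot\|$ is a function on $X^*$, so the norm appearing there is tacitly the dual norm $\|\cdot\|_*$; your computation correctly produces $g^*(\|x^*\|_*)$ and thus matches the intended reading.
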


Abusing terminology, we will consider the Fenchel conjugate of nonnegative functions only defined on $[0,+\infty),$ say $\delta: [0,+\infty) \to [0,+\infty)$. In order to avoid problems, we will assume that all the functions involved are extended to all of $\R$ by setting $\delta(t)= \delta(-t)$ for $t<0$. Hence $\delta$ will be an even function on $\R$ and therefore
$$
\delta^*(t)= \sup_{ s\in \R} \lbrace ts-\delta(s) \rbrace =  \sup_{ s \geq 0} \lbrace ts-\delta(s) \rbrace ,\quad \text{for} \quad t \geq 0.
$$

In the following proposition we collect some elementary facts concerning the functions $\omega, \omega^{-1}, \varphi$ and $\varphi^{*}$.

\begin{proposition}\label{relationvarphivarphistar}
Let $\omega:[0, \infty)\to [0,\infty)$ be a concave and nondecreasing function with $\omega(0)=0,$ and define $\varphi(t)=\int_{0}^{t}w(s)ds$. Then:
\begin{enumerate}
\item  $\varphi$ is convex;
\item  $(t/2) \omega(t) \leq \varphi(t) \leq t\omega(t/2)$ for all $t\geq 0$;
\item  $\omega(ct)\leq c\omega(t)$ for all $c\geq 1, t\geq 0.$
\end{enumerate}
If, in addition, $\omega$ is increasing and $\lim_{t\to\infty}\omega(t)=\infty,$ then $\omega^{-1}$ and $\varphi^*$ are well defined and
\begin{enumerate}
\item[$(4)$]  $\varphi^*(t)= \int_0^t \omega^{-1}(s) ds$ for all $t\geq 0$;
\item[$(5)$]  $\varphi(t)+\varphi^*(s)=ts$ if and only if $s= \omega(t);$ 
\item[$(6)$] $t\omega^{-1}(t/2)\leq \varphi^{*}(t)\leq (t/2)\omega^{-1}(t)$ for all $t\geq 0$.
 \end{enumerate}
\end{proposition}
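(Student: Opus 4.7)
The statement collects convex-analytic facts that all flow from concavity of $\omega$ (together, in (4)--(6), with the dual convexity of $\omega^{-1}$), and my plan is to verify them in the order stated. Item (1) is immediate since $\varphi'=\omega$ is nondecreasing. For (3), concavity applied to the convex combination $t=c^{-1}(ct)+(1-c^{-1})\cdot 0$, combined with $\omega(0)=0$, yields $\omega(t)\geq c^{-1}\omega(ct)$ for $c\geq 1$. For (2), concavity of $\omega$ with $\omega(0)=0$ forces $s\mapsto\omega(s)/s$ to be nonincreasing on $(0,\infty)$, so $\omega(s)\geq(s/t)\omega(t)$ on $[0,t]$, and integrating yields the lower bound $(t/2)\omega(t)\leq\varphi(t)$. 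For the upper bound I will symmetrize the integrand,
\begin{equation*}
\varphi(t)=\tfrac12\int_0^t\bigl[\omega(s)+\omega(t-s)\bigr]\,ds\leq\tfrac12\int_0^t 2\omega(t/2)\,ds=t\omega(t/2),
\end{equation*}
the inequality being concavity applied at the midpoint of $s$ and $t-s$.

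Now assume further that $\omega$ is strictly increasing with $\omega(t)\to\infty$, so that $\omega^{-1}$ is well defined, convex, increasing, and vanishes at $0$. For (4), the classical Legendre identity $(\varphi^*)'=(\varphi')^{-1}=\omega^{-1}$, combined with $\varphi^*(0)=0$, gives $\varphi^*(t)=\int_0^t\omega^{-1}(s)\,ds$; an equivalent direct computation is to solve $t-\omega(s)=0$ in the supremum defining $\varphi^*$, obtain the closed form $\varphi^*(t)=t\omega^{-1}(t)-\varphi(\omega^{-1}(t))$, and recover the integral expression through the substitution $s=\omega(u)$. Item (5) is then the equality case of the Fenchel--Young inequality $\varphi(t)+\varphi^*(s)\geq ts$: equality holds iff $s\in\partial\varphi(t)$, and differentiability of $\varphi$ makes $\partial\varphi(t)=\{\omega(t)\}$. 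A direct check is also quick via the rectangle-area reading of the two integrals of (4).

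Finally, (6) is dual to (2): since $\omega^{-1}$ is convex with $\omega^{-1}(0)=0$, the ratio $s\mapsto\omega^{-1}(s)/s$ is now nondecreasing, so $\omega^{-1}(s)\leq(s/t)\omega^{-1}(t)$ on $[0,t]$ and integration gives the upper bound $\varphi^*(t)\leq(t/2)\omega^{-1}(t)$. For the lower bound I will apply the same symmetrization as in (2), but convexity of $\omega^{-1}$ now reverses the midpoint inequality, $\omega^{-1}(s)+\omega^{-1}(t-s)\geq 2\omega^{-1}(t/2)$, whence
\begin{equation*}
\varphi^*(t)=\tfrac12\int_0^t\bigl[\omega^{-1}(s)+\omega^{-1}(t-s)\bigr]\,ds\geq t\omega^{-1}(t/2).
\end{equation*}
All six items are elementary; the only point that warrants care, and the mild ``obstacle'' to keep in mind, is that the dual roles of concavity in (2) and convexity in (6) swap the coefficients $t$ and $t/2$ in the two pairs of bounds.
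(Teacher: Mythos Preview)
Your proof is correct and complete. The paper itself omits the proof of this proposition, treating the six items as elementary facts about concave moduli and their conjugates, so there is nothing to compare against; your arguments---in particular the symmetrization trick $\varphi(t)=\tfrac12\int_0^t[\omega(s)+\omega(t-s)]\,ds$ for the upper bound in (2) and its convex counterpart for the lower bound in (6)---are clean and self-contained.
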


\begin{lemma}\label{constantnormomega}
Let $(X, |\cdot|)$ be a Hilbert space, and $\omega$ a modulus of continuity as in the preceding proposition. Then the function $\psi(x)= \varphi(| x|), \: x\in X$, satisfies the following inequality:
$$
\psi( \lambda x+(1-\lambda) y) \geq \lambda \psi(x)+ (1-\lambda)\psi(y) - 2 \lambda(1-\lambda)\varphi(|x-y|) \quad \text{for all} \quad x,y \in X, \: \lambda \in [0,1].
$$
\end{lemma}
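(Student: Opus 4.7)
\bigskip

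The plan is to reduce the Hilbert space inequality to a one-dimensional paraconvexity estimate for $\varphi$, exploiting the parallelogram identity. Let $v=\lambda x+(1-\lambda)y$ and set $r=|x|$, $s=|y|$, $m=|v|$, $t=|x-y|$. The parallelogram identity gives the key algebraic relation
\begin{equation*}
m^2+\lambda(1-\lambda)t^2=\lambda r^2+(1-\lambda)s^2.
\end{equation*}

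First I will establish a scalar paraconvexity lemma: for $a,b\geq 0$ and $\lambda\in[0,1]$,
\begin{equation*}
\lambda\varphi(a)+(1-\lambda)\varphi(b)\leq \varphi(\lambda a+(1-\lambda)b)+2\lambda(1-\lambda)\varphi(|a-b|).
\end{equation*}
Taking WLOG $a\geq b$ and writing $c=\lambda a+(1-\lambda)b$, the convexity defect $\lambda\varphi(a)+(1-\lambda)\varphi(b)-\varphi(c)$ splits as $\lambda[\varphi(a)-\varphi(c)]-(1-\lambda)[\varphi(c)-\varphi(b)]$, which by the monotonicity of $\omega$ is bounded by $\lambda(1-\lambda)(a-b)(\omega(a)-\omega(b))$. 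Subadditivity $\omega(a)-\omega(b)\leq \omega(a-b)$ (valid for $\omega$ concave with $\omega(0)=0$) and the estimate $(a-b)\omega(a-b)\leq 2\varphi(a-b)$ from Proposition~\ref{relationvarphivarphistar}(2) then give the bound.

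Next, to lift this to the Hilbert space setting, I parameterize, for fixed $m,t,\lambda$, all configurations realizing these values by the signed projection $\alpha:=\langle v,x-y\rangle/t\in[-m,m]$; direct computation yields $r(\alpha)^2=m^2+2(1-\lambda)\alpha t+(1-\lambda)^2 t^2$ and $s(\alpha)^2=m^2-2\lambda\alpha t+\lambda^2 t^2$. The function $F(\alpha):=\lambda\varphi(r(\alpha))+(1-\lambda)\varphi(s(\alpha))$ has derivative $\lambda(1-\lambda)t[\omega(r)/r-\omega(s)/s]$, which is non-increasing in $\alpha$ because $u\mapsto\omega(u)/u$ is non-increasing (as $\omega$ is concave with $\omega(0)=0$). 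Hence $F$ is concave on $[-m,m]$ and attains its maximum either at a boundary point $\alpha=\pm m$, where $x,y$ are collinear and the inequality reduces to the scalar case handled above, or at the interior critical point $\alpha^*=(2\lambda-1)t/2$, where $r=s=\sqrt{m^2+\lambda(1-\lambda)t^2}$.

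The remaining task is therefore to prove that, in the critical-point regime (which forces $m\geq|2\lambda-1|t/2$ and $\rho:=\sqrt{m^2+\lambda(1-\lambda)t^2}\geq t/2$),
\begin{equation*}
\varphi(\rho)-\varphi(m)\leq 2\lambda(1-\lambda)\varphi(t).
\end{equation*}
To do this I change variables $u=\sqrt{m^2+\lambda(1-\lambda)\tau^2}$, $\tau\in[0,t]$, rewriting $\varphi(\rho)-\varphi(m)=\int_0^t \omega(u(\tau))\cdot\lambda(1-\lambda)\tau/u(\tau)\,d\tau$, and compare pointwise against the density $2\lambda(1-\lambda)\omega(\tau)$ of the right-hand side. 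The ratio $\omega(u(\tau))\tau/u(\tau)$ versus $2\omega(\tau)$ is controlled via the monotonicity of $\omega(\cdot)/\cdot$ together with the lower bound $u(\tau)\geq m\geq|2\lambda-1|t/2$ and the two-sided estimate $(t/2)\omega(t)\leq \varphi(t)\leq t\omega(t/2)$ from Proposition~\ref{relationvarphivarphistar}(2).

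The delicate step will be the last one: the straightforward bound $\omega(u(\tau))\leq \omega(\rho)$ is too lossy for small $m$, so one must use the sharper estimate $\omega(u)\leq u\,\omega(m)/m$ that follows from $\omega(\cdot)/\cdot$ being non-increasing, and then exploit the lower bound $m\geq|2\lambda-1|t/2$ together with Proposition~\ref{relationvarphivarphistar}(2) to match the right-hand side; alternatively, in the symmetric regime $\rho+m\geq t$ one can bound $\omega(\rho)$ using the comparison $\omega(\rho)/\rho\leq \omega(t)/t$ when $\rho\geq t$, or $\omega(\rho)\leq \omega(t)$ when $\rho\leq t$, to conclude.
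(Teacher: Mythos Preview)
Your approach is genuinely different from the paper's and is an attractive elementary strategy: the paper goes through Fenchel duality, observing that $\psi^*(x)=\int_0^{|x|}\omega^{-1}(s)\,ds$ is uniformly convex with modulus $\delta(t)=\int_0^t\omega^{-1}(s/2)\,ds$ (citing Vladimirov--Nesterov--Chekanov), and then invokes the duality between uniform convexity and uniform smoothness to get modulus of smoothness $\delta^*(t)=2\varphi(t)$. Your reduction via the parallelogram law and the concavity of $F(\alpha)$ is correct and elegant, and your scalar lemma for $a,b\geq 0$ is proved cleanly.

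However, there are two genuine gaps.

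\textbf{The boundary case does not reduce to your scalar lemma.} At $\alpha=\pm m$ the configuration is collinear, but $x$ and $y$ may lie on \emph{opposite} sides of the origin. Concretely, at $\alpha=m$ with $m<\lambda t$ one has $r=m+(1-\lambda)t$ and $s=\lambda t-m$, so that $r+s=t$ (not $|r-s|=t$) and $\lambda r-(1-\lambda)s=m$ (not $\lambda r+(1-\lambda)s=m$). This is a different inequality from your scalar lemma, and it is not implied by it; you would need to prove the one-dimensional statement for signed reals separately.

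\textbf{The critical-point estimate is not closed.} You need $\varphi(\rho)-\varphi(m)\leq 2\lambda(1-\lambda)\varphi(t)$ with $\rho^2=m^2+\lambda(1-\lambda)t^2$. The pointwise comparison $\omega(u(\tau))\tau/u(\tau)\leq 2\omega(\tau)$ fails: it would require $u(\tau)\geq\tau/2$, i.e.\ $m^2+\lambda(1-\lambda)\tau^2\geq\tau^2/4$, which is false for small $m$ and $\lambda$ away from $1/2$. Your fallback $\omega(u)\leq u\,\omega(m)/m$ yields $\int_0^t \omega(u)\tau/u\,d\tau\leq t^2\omega(m)/(2m)$, and to bound this by $2\varphi(t)$ one would need $\omega(m)/m\leq 4\varphi(t)/t^2$, which also fails for $m$ close to $|2\lambda-1|t/2$ and $\lambda$ near $1/2$ (test $\omega(t)=\sqrt t$). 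The subadditivity of $\varphi\circ\sqrt{\cdot}$ gives $\varphi(\rho)-\varphi(m)\leq\varphi(\sqrt{\lambda(1-\lambda)}\,t)$, but the needed bound $\varphi(ct)\leq 2c^2\varphi(t)$ with $c=\sqrt{\lambda(1-\lambda)}\leq 1/2$ is false for general concave $\omega$ when $c$ is small. None of the routes you sketch delivers the constant $2$; the argument as written is incomplete.
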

\begin{proof}
Since $\varphi^*(t)= \int_0^t \omega^{-1}(s)ds,$ by Proposition \ref{elementarypropertiesconjugate}, the Fenchel conjugate $\psi^*$ of $\psi$ satisfies $\psi^*(x)=\int_0^{|x|} \omega^{-1}(s)ds$, where $\omega^{-1}$ is a convex function. We know from \cite{VladimirovNesterovCekanov} that $\psi^*$ is uniformly convex on $X$ with modulus of convexity $\delta(t)= \int_0^t \omega^{-1}(s/2)ds,$ that is,
$$
\psi^*( \lambda x+(1-\lambda) y) \leq \lambda \psi^*(x)+ (1-\lambda)\psi^*(y) -  \lambda(1-\lambda)\delta(|x-y|) \quad \text{for all} \quad x,y \in X, \: \lambda \in [0,1].
$$
Using the duality theorem (see \cite[Proposition 3.5.3]{Zalinescubook}, for instance), we obtain that $\psi= (\psi^*)^*$ is uniformly smooth with modulus of smoothness $\delta^*,$ that is,
$$
\psi( \lambda x+(1-\lambda) y) \geq \lambda \psi(x)+ (1-\lambda)\psi(y) -  \lambda(1-\lambda)\delta^*(|x-y|) \quad \text{for all} \quad x,y \in X, \: \lambda \in [0,1].
$$
We have that $\delta(t)=\int_0^t k(s) ds,$ where $k(s)= \omega^{-1}(s/2)$ and therefore 
$$
\delta^*(t)=\int_0^t k^{-1}(s)ds = \int_0^t 2 \omega(s)ds =2\varphi(t). 
$$
\end{proof}

For H\"older moduli of continuity, the preceding lemma is true with constant $2^{1-\alpha}$ instead of $2$. 

\begin{lemma}\label{constantnormholder}
Let $(X, |\cdot|)$ be a Hilbert space, and $\omega(t)=t^\alpha$ for $\alpha\in (0,1].$ Then the function $\psi(x)= \varphi(| x|), \: x\in X$, satisfies the following inequality:
$$ 
\psi( \lambda x+(1-\lambda) y) \geq \lambda \psi(x)+ (1-\lambda)\psi(y) - 2^{1-\alpha} \lambda(1-\lambda)\varphi(|x-y|) \quad \text{for all} \quad x,y \in X, \: \lambda \in [0,1].
$$
\end{lemma}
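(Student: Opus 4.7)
The claim, rescaled by $1+\alpha$ and written with $p=1+\alpha$, becomes
\[
|\lambda x + (1-\lambda) y|^{p}\ \geq\ \lambda\,|x|^{p} + (1-\lambda)\,|y|^{p}\ -\ 2^{1-\alpha}\,\lambda(1-\lambda)\,|x-y|^{p}.
\]
My plan is to follow the Fenchel-duality scheme used for Lemma \ref{constantnormomega} and to sharpen the modulus of uniform convexity in the conjugate picture, taking advantage of the fact that $\omega(t)=t^\alpha$ forces $\psi^{*}$ to be a pure power of the Hilbert norm.

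An explicit computation gives $\varphi^{*}(s)=\tfrac{\alpha}{1+\alpha}s^{(1+\alpha)/\alpha}$, so by Proposition \ref{elementarypropertiesconjugate}(3) one has $\psi^{*}(x)=\tfrac{\alpha}{1+\alpha}|x|^{q}$ with $q=1+\tfrac{1}{\alpha}\geq 2$. For such a pure power, the Vladimirov--Nesterov--\v{C}ekanov modulus used in Lemma \ref{constantnormomega} is not optimal; combining the Hilbert parallelogram identity $|\mu y_1+\nu y_2|^{2}=\mu|y_1|^{2}+\nu|y_2|^{2}-\mu\nu|y_1-y_2|^{2}$ with the convexity of $t\mapsto t^{q/2}$ (valid since $q\geq 2$), I would establish the sharp $q$-uniform convexity estimate
\[
\mu\,|y_1|^{q}+\nu\,|y_2|^{q}\ -\ |\mu y_1+\nu y_2|^{q}\ \geq\ 2^{2-q}\,\mu\nu\,|y_1-y_2|^{q}
\]
for all $y_1,y_2\in X$ and $\mu,\nu\geq 0$ with $\mu+\nu=1$. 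Sharpness is witnessed by the antipodal configuration $y_1=-y_2$, $\mu=\nu=\tfrac{1}{2}$, where both sides coincide.

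Applying the duality theorem (see \cite[Proposition 3.5.3]{Zalinescubook}) to convert uniform convexity of $\psi^{*}$ into uniform smoothness of $\psi=\psi^{**}$ then completes the proof. The Fenchel conjugate of $Ct^{q}$ equals $\tfrac{q-1}{q}(Cq)^{-1/(q-1)}t^{q/(q-1)}$; plugging in $C=\tfrac{\alpha}{1+\alpha}\cdot 2^{2-q}$ and $q=1+\tfrac{1}{\alpha}$ one finds $Cq=2^{2-q}$, $q/(q-1)=1+\alpha$, and $(Cq)^{-1/(q-1)}=2^{(q-2)/(q-1)}=2^{1-\alpha}$; after simplification the conjugate reduces to exactly $2^{1-\alpha}\varphi(t)$. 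Thus the sharp $q$-convexity constant $2^{2-q}$ of $\psi^{*}$ collapses under duality to the claimed $p$-smoothness constant $2^{1-\alpha}$ of $\psi$.

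The hardest part is the sharp $q$-uniform convexity bound in the middle step: the standard chain of concavity and subadditivity inequalities delivers only the weaker estimate $(\mu\nu)^{q/2}|y_1-y_2|^{q}$, whose Fenchel dual merely recovers the generic constant $2$ of Lemma \ref{constantnormomega}. Reaching the linear-in-$\mu\nu$ form with the sharp coefficient $2^{2-q}$ requires a more delicate argument --- either a Clarkson-type inequality tailored to the Hilbert setting, or a direct reduction to the extremal antipodal configuration followed by an elementary one-variable verification.
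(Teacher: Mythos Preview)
Your approach is essentially identical to the paper's: both compute $\psi^{*}(x)=\tfrac{\alpha}{1+\alpha}|x|^{q}$ with $q=1+\tfrac{1}{\alpha}$, invoke the sharp modulus of uniform convexity $\delta(t)=\tfrac{2^{1-\alpha^{-1}}}{1+\alpha^{-1}}t^{1+\alpha^{-1}}$ for this power (which is exactly your $Ct^{q}$ with $C=\tfrac{\alpha}{1+\alpha}2^{2-q}$), and then apply the duality theorem to obtain $\delta^{*}=2^{1-\alpha}\varphi$. The only difference is that the paper simply cites \cite{VladimirovNesterovCekanov} for the sharp power-type modulus of convexity rather than deriving it via the parallelogram/Clarkson argument you sketch; your observation that the \emph{general} modulus $\int_{0}^{t}\omega^{-1}(s/2)\,ds$ from Lemma~\ref{constantnormomega} loses a factor of $2$ here is correct, and the paper implicitly relies on a second, sharper result from the same reference for the pure-power case.
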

\begin{proof}
The Fenchel conjugate $\psi^*$ of $\psi$ satisfies $\psi^*(x)= \int_0^{|x|} t^{1/\alpha} dt=(1+\alpha^{-1})^{-1} |x|^{1+\alpha^{-1}}.$ We know from \cite{VladimirovNesterovCekanov} that $\psi^*$ is uniformly convex with modulus of convexity 
$$
\delta(t)=\frac{2^{1-\alpha^{-1}}}{1+ \alpha^{-1} }t^{1+\alpha^{-1}}.
$$ 
Thanks to Proposition \ref{relationvarphivarphistar} we have 
$$
\delta^*(t)=  2^{1-\alpha^{-1} } \left( s \mapsto \frac{s^{1+\alpha^{-1}}}{1+ \alpha^{-1}} \right)^*\left(  2^{\alpha^{-1}-1} t \right) = 2^{1-\alpha^{-1} } \left( s \mapsto \frac{s^{1+\alpha}}{1+ \alpha} \right)\left(  2^{\alpha^{-1}-1} t \right) = \frac{2^{1-\alpha}}{1+\alpha}t^{1+\alpha}.
$$
By using the duality theorem as in Lemma \ref{constantnormomega}, we obtain the desired inequality. 
\end{proof}

\medskip
\begin{definition}\label{definition of strongly paraconvex}
{\em If $C\geq 0$ is a constant, we will say that a function $u$ is strongly $C\varphi$-paraconvex on a Banach space $X$ if we have
\begin{equation}\label{strong phi paraconvexity}
u\left( tx+ (1-t)y\right) \leq t u\left( x\right) +(1-t) u \left( y\right)+C t(1-t)\varphi\left( \|x-y \|\right)
\end{equation}
for all $x, y\in X$ and all $t\in [0, 1]$. }
\end{definition}
Thus the preceding two lemmas can be restated by saying that $-\psi$ is strongly $C\varphi$-paraconvex for some $C>0$. On the other hand, since $\psi$ is also convex, $\psi$ is trivially strongly $\varphi$-paraconvex.

Some authors call such functions $u$ {\em semiconvex}, or $\varphi$-semiconvex, but we prefer not to use this terminology because it may make the reader think that the function $u+ C\varphi\left(|\cdot|\right)$ will be convex, at least locally for some large $C$, which is generally false unless $\omega$ is linear. See \cite{CannarsaSinestrari, JouraniThibaultZagrodny, Rolewicz1, Rolewicz2} and the references therein for background on paraconvex and strongly $\varphi$-paraconvex functions.

Next we recall a well-known fact about this kind of functions which we will have to use in our proofs. This result is usually shown in more specialized settings with the help of subdifferentials or Clarke's generalized gradients. For the reader's convenience (and also because we need precise estimates and the literature's terminology varies depending on authors), we include a self-contained elementary proof of this result.

\begin{proposition}\label{if u and -u are strongly paraconvex then u is unoformly differentiable}
Let $(X, \|\cdot\|)$ be a Banach space, $\omega$ a modulus of continuity, $\varphi(t)=\int_{0}^{t}\omega(s)ds$, and $u:X\to\R$ be a continuous function. Assume that both $u$ and $-u$ are strongly $C\varphi$-paraconvex. Then $u$ is everywhere Fr\'echet differentiable, and, with the notation of \eqref{Our optimal condition}, $A(u,Du) \leq C.$ In particular $u$ is of class $C^{1,\omega}(X)$ with
$$
\|Du(x)-Du(y)\|_* \leq \frac{2C\varphi\left( \frac{3}{2}\|x-y \| \right)}{\|x-y\|} \leq 3 C \omega(\|x-y\|)
$$
for all $x, y\in X.$ Moreover, if $X$ is a Hilbert space, we have 
$$
| D u (x)-Du(y) | \leq  C \min \left\lbrace  \frac{8}{\sqrt{15}} \, \omega(|x-y|) , \frac{4}{\sqrt{3}} \, \omega \left( \frac{|x-y|}{2} \right) \right\rbrace \quad \text{for all} \quad x,y\in X,
$$
and $| D u (x)-Du(y) | \leq   \frac{2^{1-\alpha}}{\sqrt{1+\alpha}} \left( 1+\frac{1}{\alpha}\right)^{\alpha/2}  C \, |x-y|^\alpha$ in the special case that $\omega(t)=t^\alpha,\, \alpha \in (0,1].$ 
\end{proposition}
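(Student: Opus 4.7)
The plan is to establish first the Taylor-type estimate
\[
|u(x+h) - u(x) - \langle Du(x), h\rangle| \leq C\varphi(\|h\|) \qquad (\ast)
\]
and then deduce all the claimed inequalities from it by choosing test points cleverly in the three-point inequality that $(\ast)$ yields.

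I would begin with the directional derivatives. Fix $x, h \in X$ and set $\phi(s) := u(x+sh)$; strong $C\varphi$-paraconvexity of $u$ applied at $(x+sh, x)$ with weight $\lambda = \sigma/s$ for $0 < \sigma < s$ gives
\[
\frac{\phi(\sigma) - \phi(0)}{\sigma} - \frac{\phi(s) - \phi(0)}{s} \leq C\Bigl(1 - \tfrac{\sigma}{s}\Bigr)\frac{\varphi(s\|h\|)}{s},
\]
and the reverse inequality follows from the same hypothesis for $-u$. Since $\varphi(t)/t \to 0$ as $t \to 0^+$, the right-hand side tends to $0$ as $s \to 0^+$, the difference quotients are Cauchy, and $u'(x;h) := \lim_{s \to 0^+}(\phi(s) - \phi(0))/s$ exists; letting $\sigma \to 0^+$ in the same inequality gives $(\ast)$ with $u'(x;h)$ in place of $\langle Du(x), h\rangle$. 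Linearity of $h \mapsto u'(x;h)$ is then extracted by applying midpoint paraconvexity to both $u$ and $-u$: at the pair $\{x-h, x+h\}$ one obtains (after invoking $(\ast)$ and scaling $h \mapsto th$ with $t \to 0^+$) that $u'(x;-h) = -u'(x;h)$, and at $\{x+ta, x+tb\}$ one analogously obtains $u'(x;a+b) = u'(x;a) + u'(x;b)$. Local boundedness of $u$ together with $(\ast)$ shows that $u'(x;\cdot)$ is a bounded linear functional, so $u$ is Fr\'echet differentiable with $Du(x) := u'(x;\cdot)$. Applying $(\ast)$ at the pairs $(x,y)$ and $(x,z)$ and subtracting yields the key three-point inequality $A(u,Du) \leq C$:
\[
|f_y(x) - f_z(x)| \leq C\bigl(\varphi(\|x-y\|) + \varphi(\|x-z\|)\bigr), \quad f_w(x) := u(w) + \langle Du(w), x-w\rangle.
\]

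For the Banach-space bound, fix $y, z$ with $r := \|y-z\|$ and a unit vector $e \in X$, and evaluate the three-point inequality at $x^\pm := (y+z)/2 \pm re$: the four distances $\|x^\pm - y\|$ and $\|x^\pm - z\|$ are each $\leq 3r/2$ by the triangle inequality. Since $x \mapsto f_y(x) - f_z(x)$ is affine with linear part $\langle Du(y) - Du(z), \cdot\rangle$, subtracting the bounds at $x^+$ and $x^-$ gives $2r|\langle Du(y) - Du(z), e\rangle| \leq 4C\varphi(3r/2)$, whence $\|Du(y) - Du(z)\|_* \leq 2C\varphi(3r/2)/r$; the chain $\varphi(3r/2) \leq (3r/2)\omega(3r/4) \leq (3r/2)\omega(r)$ from Proposition \ref{relationvarphivarphistar} yields $3C\omega(r)$. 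For the Hilbert refinement, replace $e$ by $v/|v|$, where $v := Du(y) - Du(z)$: the parallelogram identity gives $|x^\pm - y|^2 + |x^\pm - z|^2 = 2(r^2/4 + s^2)$ with $x^\pm := (y+z)/2 \pm sv/|v|$, and concavity of $t \mapsto \varphi(\sqrt{t})$ (a consequence of $\omega(t)/t$ being non-increasing) together with Jensen gives $\varphi(|x^\pm - y|) + \varphi(|x^\pm - z|) \leq 2\varphi(\sqrt{r^2/4 + s^2})$. A short computation shows that $f_y(x^\pm) - f_z(x^\pm) = \alpha \pm s|v|$, where the symmetric Taylor error $\alpha := u(y) - u(z) - \tfrac{1}{2}\langle Du(y)+Du(z), y-z\rangle$ \emph{cancels} when the bounds at $x^\pm$ are subtracted, leaving $|v| \leq 2C\varphi(\sqrt{s^2 + r^2/4})/s$ for every $s > 0$. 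The choices $s = r\sqrt{3}/2$ and $s = r\sqrt{15}/2$ give $\sqrt{s^2 + r^2/4} = r$ and $2r$ respectively, and combining with $\varphi(r) \leq r\omega(r/2)$ and $\varphi(2r) \leq 2r\omega(r)$ from Proposition \ref{relationvarphivarphistar} produces the bounds $(4/\sqrt{3})C\omega(r/2)$ and $(8/\sqrt{15})C\omega(r)$; for H\"older moduli $\omega(t) = t^\alpha$ the optimal $s = r/(2\sqrt{\alpha})$ yields the stated constant. I expect the main obstacle to be extracting $(\ast)$ with a uniform remainder from the pointwise paraconvexity hypotheses, since the cancellation of the two paraconvexity inequalities for $u$ and $-u$ must be orchestrated carefully at every limiting step; once $(\ast)$ is in hand, the rest reduces to clever choices of test points that exploit the affine structure of $f_y - f_z$.
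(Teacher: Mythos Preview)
Your proof is correct and follows essentially the same path as the paper: derive the Cauchy property of difference quotients from the pair of paraconvexity inequalities, obtain the Taylor estimate $(\ast)$, conclude $A(u,Du)\leq C$, and then extract the modulus-of-continuity bounds for $Du$ by evaluating the three-point inequality at symmetric test points $x^\pm=\tfrac{y+z}{2}\pm w$ (your derivation here is exactly the content of Proposition~\ref{equivalence (mg)(W)}(3), which the paper simply invokes at the end).

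The one place where you depart from the paper is the proof of linearity of $h\mapsto u'(x;h)$. The paper first establishes the auxiliary estimate $\sup_{\|v\|=1}|D_v u(a)-D_v u(b)|\leq 5C\omega(\|a-b\|)$ via a somewhat involved algebraic rearrangement, and only then uses this continuity to prove additivity. Your route is more direct: apply midpoint paraconvexity of $\pm u$ at $\{x+ta,x+tb\}$, substitute the Taylor estimate $(\ast)$ for each of the three values of $u$, use positive homogeneity to factor out $t$, and let $t\to 0^+$. This bypasses the intermediate $5C$-estimate entirely and is a genuine simplification; the paper's detour through that estimate buys nothing, since the sharp modulus-of-continuity bounds are obtained afterwards from $A(u,Du)\leq C$ anyway.
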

\begin{proof}
Taking $y=a$ and $h=x-a$ in \eqref{strong phi paraconvexity} we see that $u$ satisfies
\begin{equation}\label{strong phi paraconvexity with a h}
u(a+th)\leq t u(a+h)+ (1-t) u(a) +Ct(1-t)\varphi\left(\|h\|\right)
\end{equation}
for all $a, h\in X$, $t\in [0,1]$. Also (taking $t=1/2$, $2a=x+y$, $2h=x-y$) we have
$$
u(a+h)+u(a-h)-2u(a)\geq -\tfrac{C}{2}\varphi(2\|h\|),
$$
and since $-u$ is strongly $C\varphi$-paraconvex too, we obtain
\begin{equation}\label{differentiability with 2u(a)}
|u(a+h)+u(a-h)-2u(a)|\leq \tfrac{C}{2}\varphi(2\|h\|).
\end{equation}
For the moment, let us fix $a$ and $h$ in $X,$ and consider $s,t\in (0,1]$. The inequality \eqref{strong phi paraconvexity with a h} implies
$$
\frac{u(a+tsh)-u(a)}{ts}\leq \frac{u(a+sh)-u(a)}{s}+ C(1-t)\frac{\varphi(s\|h\|)}{s}.
$$
Similarly, because $-u$ is also strongly $C\varphi$-paraconvex, we have
$$
 \frac{u(a+sh)-u(a)}{s}\leq \frac{u(a+tsh)-u(a)}{ts}+ C(1-t)\frac{\varphi(s\|h\|)}{s}.
$$
Therefore
\begin{equation}\label{inequality for differentiability of u}
\left|  \frac{u(a+sh)-u(a)}{s} -\frac{u(a+tsh)-u(a)}{ts} \right|
\leq C(1-t)\frac{\varphi(s\|h\|)}{s}
\end{equation}
for all $s, t\in (0, 1]$, $a, h\in X$.
This entails the existence and local uniform boundedness of the limit
\begin{equation}\label{existence of directional derivatives of u}
\lim_{t\to 0^{+}}\frac{u(a+tv)-u(a)}{t} :=D_{v}u(a)
\end{equation}
for $a, v\in X$. Indeed, on the one hand, by taking $s=1$ and using that $u$ is locally bounded we see that there is some $r>0$ and a constant $k_r$ such that
\begin{equation}\label{boundedness of the differential quotient for u at a}
\left|\frac{u(a+th)-u(a)}{t}\right|\leq k_r \textrm{ for all } h\in B(0, r).
\end{equation}
On the other hand, if the limit in \eqref{existence of directional derivatives of u} did not exist then there would be some $\varepsilon>0$ and two sequences $(s_n), (r_n)$ of strictly positive numbers converging to $0$ such that
$$
\left| \frac{u(a+s_n v)-u(a)}{s_n}- \frac{u(a+r_n v)-u(a)}{r_n}\right|\geq \varepsilon
$$
for all $n$. Up to extracting subsequences we may assume that $0<r_n<s_n$ for all $n$, and then find $(t_n)\subset (0, 1]$ such that $r_n=t_n s_n$ for every $n$, so that the above inequality reads
$$
\left| \frac{u(a+s_n v)-u(a)}{s_n}- \frac{u(a+t_n s_n v)-u(a)}{t_n s_n}\right|\geq \varepsilon,
$$
in contradiction with \eqref{inequality for differentiability of u} and the fact that
\begin{equation}\label{varphishovers goes to 0}
\lim_{s\to 0^{+}}\frac{\varphi(s\|v\|)}{s}=0.
\end{equation}
Next, by using \eqref{differentiability with 2u(a)} and \eqref{varphishovers goes to 0} we also get
$$
\lim_{t\to 0^{+}}\frac{u(a+tv)-u(a)+u(a-tv)-u(a)}{t}=0,
$$
which shows that $D_{-v}u(a)=-D_{v}u(a)$ and consequently that the directional derivative
$$
\lim_{t\to 0}  \frac{u(a+tv)-u(a)}{t}
$$
exists and equals $D_{v}u(a)$.
Furthermore, by letting $t$ go to $0$ in \eqref{inequality for differentiability of u} we also have
\begin{equation}\label{second inequality for differentiability of u}
\left|  \frac{u(a+sv)-u(a)}{s} -D_{v}u(a) \right|
\leq C\frac{\varphi(s\|v\|)}{s},
\end{equation}
for every $a, v\in X$, $s\in (0,1]$,
and in particular
\begin{equation}\label{third inequality for differentiability of u}
\left| u(a+v)-u(a)-D_{v}u(a) \right|
\leq C \varphi(\|v\|)
\end{equation}
for all $a, v\in X$. 

In order to finish the proof that $u$ is differentiable, we will now combine some calculations from \cite[Theorem 3.3.7]{CannarsaSinestrari} and \cite[Theorem 6.1]{JouraniThibaultZagrodny}. We do not yet know that the function $v\mapsto D_v u(a)$ is linear, but we do easily get that $D_{\lambda v}u(a)=\lambda D_{v}u(a)$ for all $a\in X$ and $\lambda\in\R$; this fact is a straightforward consequence of \eqref{second inequality for differentiability of u} which we will use before establishing the linearity of $v\mapsto D_v u(a)$.
We next show that 
$$
\sup_{\|v\|=1}|D_{v}u(a)-D_{v}u(b)|\leq 5 C\omega(\|a-b\|)
$$
for all $a, b\in X$.
Indeed, writing $b=a+h$ with $h\neq 0$, and using the strong $\varphi$-paraconvexity of $u$ and $-u$, and the fact that $\varphi(t)\leq t\omega(t/2)$, we have
\begin{eqnarray*}
& & D_{v}u(b)-D_{v}u(a)= D_{v}u(a+h)-D_{v}u(a) \\
 & & \leq u(a+h+v)-u(a+h)-u(a+v)+u(a)+2 C \|v\|\omega\left(\tfrac{\|v\|}{2} \right) \\
 & & = u(a+h+v)-\tfrac{1}{2}u(a+2v)-\tfrac{1}{2}u(a+2h) \\
& & \quad - u(a+h)+\tfrac{1}{2}u(a+2h)+\tfrac{1}{2}u(a) \\
& &  \quad -u(a+v)+\tfrac{1}{2}u (a+2v)+\tfrac{1}{2}u(a)+2 C\|v\|\omega\left(\tfrac{\|v\|}{2}\right) \\
& & \leq  \tfrac{C}{2}\|h-v\|\omega\left(\|h-v\|\right) +\tfrac{C}{2}\|h\| \omega\left( \|h\|\right)+\tfrac{C}{2}\|v\| \omega\left( \|v\|\right)+2 C \|v\|\omega\left(\tfrac{\|v\|}{2}\right),
\end{eqnarray*}
which implies
$$
\sup_{\|v\|=1}|D_{v}u(a+h)-D_{v}u(a)|=\frac{1}{\|h\|}\sup_{\|v\|=\|h\|}|D_{v}u(a+h)-D_{v}u(a)| \leq 5 C\omega\left( \|h\| \right).
$$
Observe also that $\sup_{\|v\|\leq 1}|D_{v}u(a)|$ is finite for every $a$, thanks to \eqref{boundedness of the differential quotient for u at a}.
Now we may show that $v\mapsto D_v u(a)$ is linear, which together with \eqref{third inequality for differentiability of u} yields that $u$ is uniformly Fr\'echet differentiable. For every $a, v, w\in X$ we have
\begin{eqnarray*}
& &|u(a+v+w)-u(a)-D_{v}u(a)-D_{w}u(a)| \\
& & \leq |u(a+v+w)-u(a+v)-D_{w}u(a+v)|+|D_{w}u(a+v)-D_{w}u(a)|+|u(a+v)-u(a)-D_v u(a)| \\
& & \leq C \varphi\left(\|w\|\right)+5 C\|w\|\omega\left(\|v\|\right)+C\varphi\left( \|v\|\right),
\end{eqnarray*}
from which we easily deduce that $D_{v+w}u(a)=D_{v}u(a)+D_{w}u(a)$. We thus have that $u$ is everywhere Fr\'echet differentiable, and from \eqref{third inequality for differentiability of u} we obtain that the jet $(u,Du): X \to \R \times X^*$ satisfies $A(u, Du)\leq C.$ The estimations for the modulus of continuity of $Du$ are a consequence of Proposition \ref{equivalence (mg)(W)}(3) below.
\end{proof}

\medskip

Let us finish this section by studying what one could fairly call the $C\varphi$-paraconvex envelope of a function.
\begin{definition}
{\em Given a Hilbert space $X$, a continuous function $g: X\to\R$, and a number $C>0$, let us define 
$$
\mathcal{W}_{C}(x)=\sup\{ h(x) \, : \, h\leq g, \, h \textrm{ is strongly } C\varphi\textrm{-paraconvex}\}.
$$  } 
\end{definition}

\begin{lemma}\label{strongly paraconvex functions are uniformly proximal subdifferentiable everywhere}
If $h$ is strongly $C\varphi$-paraconvex on a Hilbert space $X$ then, for each $x\in X$ there exists some $\xi_x\in X^{*}$ such that
$$
h(y)\geq h(x)-\langle\xi_x, y-x\rangle-C\varphi(|y-x|)
$$
for all $y\in X$. In consequence, $h(y)=\sup_{x\in X} \lbrace h(x)-\langle\xi_x, y-x\rangle-C\varphi(|y-x|) \rbrace$ for every $y\in X.$
\end{lemma}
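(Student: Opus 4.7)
Fix $x\in X$ and set $q(v):=\limsup_{t\to 0^{+}}\bigl(h(x+tv)-h(x)\bigr)/t$ for $v\in X$; the plan is to apply the Hahn--Banach theorem to $q$ and take $\xi_x$ to be the negative of the resulting continuous linear functional. Applying the paraconvexity inequality to the pair $(x,x+v)$ with coefficients $(1-t,t)$ gives
$$\frac{h(x+tv)-h(x)}{t}\leq h(x+v)-h(x)+C(1-t)\varphi(|v|),$$
so $q(v)\leq h(x+v)-h(x)+C\varphi(|v|)$; by continuity of $h$, $q$ is therefore finite and bounded above on a neighbourhood of $0$. A symmetric use of paraconvexity at the midpoint $x=\tfrac{1}{2}(x+tv)+\tfrac{1}{2}(x-tv)$ produces $q(v)\geq -q(-v)$, keeping $q$ finite-valued everywhere.

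Next I would check that $q$ is sublinear. Positive homogeneity is immediate. For subadditivity, apply paraconvexity with $\lambda=1/2$ to the pair $(x+2tv,\,x+2tw)$, whose midpoint is $x+t(v+w)$; after subtracting $h(x)$ and dividing by $t$ one obtains
$$\frac{h(x+t(v+w))-h(x)}{t}\leq \frac{h(x+2tv)-h(x)}{2t}+\frac{h(x+2tw)-h(x)}{2t}+\frac{C\varphi(2t|v-w|)}{4t}.$$
Since $\varphi(s)/s\to 0$ as $s\to 0^{+}$ by Proposition~\ref{relationvarphivarphistar}(2), taking $\limsup$ in $t$ yields $q(v+w)\leq q(v)+q(w)$. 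The Hahn--Banach theorem then furnishes a linear functional $\eta\colon X\to\R$ with $\eta\leq q$ pointwise, and because $q$ is bounded above on a neighbourhood of $0$, so is $\eta$; hence $\eta\in X^{*}$.

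Setting $\xi_x:=-\eta$ and substituting $v=y-x$ in the chain
$$-\langle\xi_x,v\rangle=\eta(v)\leq q(v)\leq h(x+v)-h(x)+C\varphi(|v|)$$
yields the desired inequality for every $y\in X$. The ``In consequence'' assertion is then immediate: for each $y$, taking $x=y$ on the right-hand side of the supremum makes both correction terms vanish and returns the value $h(y)$, while the first part forces every other term of the supremum to be at most $h(y)$.

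The only delicate point is the subadditivity step, where the midpoint representation $x+t(v+w)=\tfrac{1}{2}(x+2tv)+\tfrac{1}{2}(x+2tw)$ has to be chosen so that the paraconvexity correction carries a factor $\varphi(2t|v-w|)/t$, which is crucial for passing to the limit; everything else reduces to standard sublinear analysis, the continuity of $h$, and the vanishing of $\varphi(s)/s$ near the origin supplied by Proposition~\ref{relationvarphivarphistar}.
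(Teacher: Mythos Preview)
Your proof is correct and takes a genuinely different route from the paper's. The paper argues via the Clarke subdifferential: it first observes (citing \cite{JouraniThibaultZagrodny}) that a strongly $C\varphi$-paraconvex function is locally Lipschitz, so $\partial_C h(x)\neq\emptyset$ for every $x$, and then uses a formula for $\partial_C h(x)$ in the paraconvex setting (also taken from \cite{JouraniThibaultZagrodny}) together with the paraconvexity inequality to upgrade the local estimate $\langle\xi,v\rangle\leq h(x+v)-h(x)+C\varphi(|v|)$ from small $v$ to all $v\in X$. Your approach instead constructs the upper Dini derivative $q$ directly, establishes its sublinearity from the paraconvexity inequality alone, and applies Hahn--Banach. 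This is more elementary and essentially self-contained: you recover by hand exactly the piece of Clarke's theory that is needed here, without importing the general machinery. The paper's proof is shorter only because it defers the substance to the external reference; yours makes every step explicit at the cost of a few more lines. One small remark: your appeal to the continuity of $h$ is legitimate but deserves a word of justification---continuity (indeed local Lipschitz continuity) is itself a consequence of strong $C\varphi$-paraconvexity, and the paper obtains it from the same external source \cite{JouraniThibaultZagrodny}.
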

\begin{proof}
Indeed, since $h$ is strongly $C\varphi$-paraconvex, it is locally Lipschitz (see \cite[Proposition 6.1]{JouraniThibaultZagrodny} for a proof of this fact), and then the Clarke subdifferential $\partial_C h(x)$ is nonempty for every $x\in X.$ Moreover, according to \cite[p. 219]{JouraniThibaultZagrodny}, the Clarke subdifferential of $h$ can be written as
$$
\partial_C h(x)= \lbrace \xi \in X^* \, : \,  \langle \xi, v \rangle \leq h(x+v)-h(x) + C \varphi(|v|) \: \text{ for all } \: |v| \leq \delta-|x-a| \rbrace,
$$
for every $x\in B(a,\delta),$ where $B(a,\delta)$ is any ball such that $h_{|_{B(a,\delta)}}$ is Lipschitz. Using that $h$ is strongly $C\varphi$-paraconvex we can prove that, in fact, we have the formula
\begin{equation}\label{clarkesubdifferentialformula}
\partial_C h(x)= \lbrace \xi \in X^* \, : \,  \langle \xi, v \rangle \leq h(x+v)-h(x) + C \varphi(|v|) \: \text{ for all } \: v\in X \rbrace \quad \text{for all} \quad x\in X.
\end{equation}
Indeed, if $x \in X, \: \xi \in \partial_C h (x)$ and $v\in X,$ we consider $t>0$ small enough so that $|tv| \leq \delta,$ where $\delta>0$ is such that $h_{|_{B(a,\delta)}}$ is Lipschitz. Then we have 
$$
h(x+tv)-th(x+v)-(1-t)h(x) \leq t(1-t)C\varphi(|v|),
$$
which implies
$$
\langle \xi, v \rangle \leq t^{-1} \left( h(x+tv)-h(x)+C\varphi(t|v|) \right) \leq h(x+v)-h(x) + (1-t)C \varphi(|v|)+ Ct^{-1}\varphi(t|v|).
$$
Letting $t\to 0^+$ and taking into account that $\varphi(t) \leq t\omega(t)$ and $\lim_{t \to 0^+} \omega(t)=0,$ we get 
$$
\langle \xi, v \rangle \leq h(x+v)-h(x) + C \varphi(|v|).
$$
We have thus shown \eqref{clarkesubdifferentialformula}. Since $h$ is locally Lipschitz we have $\partial_C h(x)\neq\emptyset$ for every $x\in X$ and the result follows.
\end{proof}

\begin{lemma}\label{another sup formula for F}
Assume that $\omega(t)=at$, where $a>0$. Then we have, for every $x\in X$, 
$$
\mathcal{W}_{C}(x)=\sup\left\{ H_{p, t, \xi}(x) \, : \, t\in\R, \: p\in X, \: \xi\in X^{*}, \:  H_{p, t, \xi}(z)\leq g(z) \: \textrm{ for all } \: z\in X\right\},
$$
where
$$
H_{p, t, \xi}(z) :=t+\langle\xi, z-p\rangle -C\varphi\left(|z-p|\right).
$$
\end{lemma}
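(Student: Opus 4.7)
The plan is to prove the two inclusions separately. Denote the right-hand side of the claimed identity by $\widetilde{\mathcal{W}}_C(x)$. Since $\omega(t) = at$ yields $\varphi(t) = at^2/2$, each candidate function
\[
H_{p,t,\xi}(z) = t + \langle \xi, z-p \rangle - \tfrac{Ca}{2}|z-p|^2
\]
is an affine function minus a multiple of $|\cdot - p|^2$, which is the model shape for strongly paraconvex functions in the quadratic case.

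For the inequality $\widetilde{\mathcal{W}}_C(x) \leq \mathcal{W}_C(x)$, I would verify that each $H_{p,t,\xi}$ is itself strongly $C\varphi$-paraconvex; then any $H_{p,t,\xi}$ with $H_{p,t,\xi}\leq g$ is admissible in the supremum defining $\mathcal{W}_C(x)$. The paraconvexity of $H_{p,t,\xi}$, after cancellation of the affine part, reduces to the identity
\[
|\lambda x + (1-\lambda)y - p|^2 = \lambda|x-p|^2 + (1-\lambda)|y-p|^2 - \lambda(1-\lambda)|x-y|^2,
\]
which is the parallelogram law in disguise and is available in any Hilbert space. This is a one-line computation, and in fact gives equality in \eqref{strong phi paraconvexity}; it is the place where the linearity of $\omega$ is essentially used.

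For the reverse inequality $\mathcal{W}_C(x) \leq \widetilde{\mathcal{W}}_C(x)$, take any admissible $h$ in the definition of $\mathcal{W}_C$ (so $h \leq g$ and $h$ is strongly $C\varphi$-paraconvex), and apply Lemma \ref{strongly paraconvex functions are uniformly proximal subdifferentiable everywhere} at the point $x$ itself: there exists $\xi_x \in X^*$ such that
\[
h(y) \geq h(x) - \langle \xi_x, y-x\rangle - C\varphi(|y-x|) \quad \text{for every } y \in X.
\]
The right-hand side is exactly the function $H_{p,t,\xi}$ with parameters $p = x$, $t = h(x)$, $\xi = -\xi_x$. Since $h \leq g$, this particular $H_{p,t,\xi}$ satisfies $H_{p,t,\xi} \leq g$ on all of $X$, and it agrees with $h$ at $x$; hence $h(x) = H_{p,t,\xi}(x) \leq \widetilde{\mathcal{W}}_C(x)$. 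Taking the supremum over all such $h$ finishes the argument.

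I do not foresee any real obstacle; the proof is essentially mechanical once Lemma \ref{strongly paraconvex functions are uniformly proximal subdifferentiable everywhere} is in hand. The only point worth underscoring is the specifically linear-$\omega$ coincidence that the subdifferential-type minorants of a strongly $C\varphi$-paraconvex function lie in the very same parameter family $\{H_{p,t,\xi}\}$ over which the supremum is taken. This coincidence breaks down for nonlinear $\omega$ — one would lose the exact matching between $-C\varphi(|\cdot-p|)$ and the local quadratic-type minorant — which is why no comparably tidy sup-formula with a finite-dimensional parameter set is expected in the general case.
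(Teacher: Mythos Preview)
Your proposal is correct and follows essentially the same route as the paper's proof: both directions are handled identically, with the paper citing Lemma~\ref{constantnormholder} (at $\alpha=1$) where you invoke the parallelogram identity directly, and both use Lemma~\ref{strongly paraconvex functions are uniformly proximal subdifferentiable everywhere} at the point $x$ to produce a minorant $H_{x,h(x),-\xi_x}\leq g$ touching $h$ at $x$. Your treatment of the sign of $\xi$ is in fact slightly more careful than the paper's.
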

\begin{proof}
Let us call
$$
H(x):=\sup\left\{ H_{p, t, \xi}(x) \: : \: t\in\R, \: p\in X, \: \xi\in X^{*}, \:  H_{p, t, \xi}(z)\leq g(z) \: \textrm{ for all } \: z\in X\right\}.
$$
On the one hand, by using Lemma \ref{constantnormholder} with $\alpha=1$, we have that $H_{p, t, \xi}$ is strongly $C\varphi$-paraconvex, hence it is clear that
$$
H(x)\leq \mathcal{W}_{C}(x).
$$
On the other hand, if $h$ is strongly $C\varphi$-paraconvex and $h\leq g$ then, according to the previous lemma, there exists some $\xi\in X^{*}$ such that
\begin{equation}
g(y)\geq h(y)\geq h(x)-\langle \xi, y-x\rangle -C\varphi(|y-x|)= H_{x, h(x), \xi}(y)
\end{equation}
for all $y\in X$.
Because $y\mapsto H_{x, h(x), \xi}(y)$ is strongly $C\varphi$-paraconvex and lies below $g$, we have, by definition of $H$,
$$
H(x)\geq H_{x, h(x), \xi}(x)=h(x).
$$
Therefore $H\geq h$ for every $h$ that is strongly $C\varphi$-paraconvex and lies below $g$. Since $\mathcal{W}_{C}$ is the supremum of all such $h$, we also have
$$
\mathcal{W}_{C}(x)\leq H(x) 
$$
for all $x\in X$. Thus we conclude $H=\mathcal{W}_{C}$. 
\end{proof}

\medskip

\section{Proofs of the main results}\label{sectionproofmainresults}

Let us start by showing the equivalence between conditions $(mg^{1, \omega})$ and $(W^{1, \omega})$ and their relation with the quantity $M_\omega(G)$ for jets $(f,G)$ defined on subsets of Banach and Hilbert spaces.

\begin{proposition}\label{equivalence (mg)(W)}
Let $E$ be an arbitrary subset of a Banach space $(X, \|\cdot\|)$, and consider a jet $(f, G):E\subset X\to \R\times X^{*}$.
\begin{enumerate}
\item Assume that $(f, G)$ satisfies condition $(W^{1, \omega})$ with constant $M>0$. Then we have
$$
f(y)+\langle G(y), x-y\rangle +M\varphi(2\|x-y\|) \geq f(z)+\langle G(z), x-z\rangle -M\varphi(2\|x-z\|)
$$
for all $y, z\in E$ and all $x\in X$. In particular $(f, G)$ satisfies $(mg^{1, \omega})$ with constant $4M$.

\item Assume that $(f, G)$ satisfies $(mg^{1,\omega})$ with constant $M$. Then $(f, G)$ satisfies $(W^{1, \omega})$ with constant $2M$. And, if $X$ is a Hilbert space, we can replace $2M$ with $M.$

\item Assume that $(f, G)$ satisfies $(mg^{1,\omega})$ with constant $M$. Then $G$ satisfies
$$
\|G(y)-G(z)\|_*  \leq \frac{2M\varphi\left( \frac{3}{2}\|y-z\| \right)}{\|y-z\|}  \leq 3 M \omega(\|y-z\|), \quad y,z\in E,
$$
and, in particular, $M_\omega(G) \leq 3M.$ Moreover, if $X$ is a Hilbert space, then
$$
|G(y)-G(z)| \leq M \min \left\lbrace  \frac{8}{\sqrt{15}} \, \omega(|y-z|) , \frac{4}{\sqrt{3}} \, \omega \left( \frac{|y-z|}{2} \right) \right\rbrace, \quad y,z\in E.
$$
Furthermore, if $X$ is a Hilbert space and $\omega(t)=t^\alpha, \, \alpha \in (0,1],$ we have $M_\omega(G) \leq \frac{2^{1-\alpha}}{\sqrt{1+\alpha}} \left( 1+\frac{1}{\alpha}\right)^{\alpha/2} M.$

\item If $(f,G)$ satisfies $(W^{1,\omega})$ with constant $M,$ then $G$ is uniformly continuous, with $M_\omega(G) \leq 4M.$

\end{enumerate}
\end{proposition}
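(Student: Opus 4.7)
The plan is to use a single affine rearrangement that relates the ``extrinsic'' quantity appearing in $(mg^{1,\omega})$ (with a free $x\in X$) to the ``intrinsic'' quantity appearing in $(W^{1,\omega})$ (involving only $y,z\in E$). Setting $u=(z-y)/2$ and $v=x-(y+z)/2$, a direct computation gives
$$-\langle G(y), x-y\rangle + \langle G(z), x-z\rangle = -\tfrac{1}{2}\langle G(y)+G(z), z-y\rangle + \langle G(z)-G(y), v\rangle,$$
together with the identities $x-y=v+u$ and $x-z=v-u$. All four assertions will follow by combining this rearrangement with the Fenchel--Young inequality and the convexity of $\varphi$.

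For Part $(1)$, I would start from $(W^{1,\omega})$ and add the cross term $\langle G(z)-G(y), v\rangle$ on both sides; this term is bounded by the Fenchel--Young inequality $ab\leq 2M\varphi(b)+2M\varphi^{*}(a/(2M))$ (a rescaling via Proposition \ref{elementarypropertiesconjugate}(1)), which exactly cancels the $-2M\varphi^{*}$ from $(W^{1,\omega})$ and leaves the residual $2M\varphi(\|v\|)+M\varphi(\|y-z\|)$. Two convexity estimates close the argument: $\|v\|\leq\tfrac12\|x-y\|+\tfrac12\|x-z\|$ together with $2\varphi(t)\leq\varphi(2t)$ gives $2\varphi(\|v\|)\leq\tfrac12\varphi(2\|x-y\|)+\tfrac12\varphi(2\|x-z\|)$, and the same splitting $y-z=(y-x)+(x-z)$ yields $\varphi(\|y-z\|)\leq\tfrac12\varphi(2\|x-y\|)+\tfrac12\varphi(2\|x-z\|)$; the ``constant $4M$'' consequence is then immediate from $\varphi(2t)\leq 4\varphi(t)$ (Proposition \ref{relationvarphivarphistar}(3)). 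For Part $(2)$, applying the same rearrangement to $(mg^{1,\omega})$ gives
$$f(z)-f(y)-\tfrac{1}{2}\langle G(y)+G(z), z-y\rangle \leq \langle G(y)-G(z), v\rangle + M\varphi(\|v+u\|) + M\varphi(\|v-u\|)$$
for every $v\in X$. In the Banach case I would bound the sum of $\varphi$'s by $4\varphi(\|v\|)+\varphi(\|y-z\|)$ via triangle inequality and convexity, and then take the infimum in $v$ using the formula $\inf_v\{\langle\xi, v\rangle+4M\varphi(\|v\|)\}=-4M\varphi^{*}(\|\xi\|_{*}/(4M))$ (combining Proposition \ref{elementarypropertiesconjugate}(1) and (3)); this produces $(W^{1,\omega})$ with constant $2M$. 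In the Hilbert case Lemma \ref{constantnormomega}, applied with $\lambda=1/2$ to the pair $v\pm u$, sharpens the estimate to $2\varphi(|v|)+\varphi(|y-z|)$, and the same Fenchel minimization then delivers the sharp constant $M$.

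Part $(3)$ exploits $(mg^{1,\omega})$ at the symmetric pair $x_{\pm}=(y+z)/2\pm sh$, where $s>0$ and $h\in X$ with $\|h\|=1$ is chosen so that $\langle G(y)-G(z), h\rangle$ is arbitrarily close to $\|G(y)-G(z)\|_{*}$ (just by definition of the dual norm; in the Hilbert case one simply takes $h=(G(y)-G(z))/|G(y)-G(z)|$). Writing $(mg^{1,\omega})$ with the roles $(z,y)$ at $x_+$ and $(y,z)$ at $x_-$ and adding makes the intrinsic terms cancel, producing
$$2s\langle G(y)-G(z), h\rangle \leq M\bigl[\varphi(\|x_+-y\|)+\varphi(\|x_+-z\|)+\varphi(\|x_--y\|)+\varphi(\|x_--z\|)\bigr].$$
In the Banach case, triangle inequality bounds each norm by $\|y-z\|/2+s$; the choice $s=\|y-z\|$ yields $\|G(y)-G(z)\|_{*}\leq 2M\varphi(\tfrac{3}{2}\|y-z\|)/\|y-z\|$, and the weaker form $3M\omega(\|y-z\|)$ follows from $\varphi(\tfrac{3}{2}t)\leq\tfrac{3}{2}t\,\omega(t)$ combined with monotonicity of $\omega$. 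In the Hilbert case, Jensen's inequality applied to the concave function $t\mapsto\varphi(\sqrt{t})$ (concavity being equivalent to $\omega(s)/s$ nonincreasing) sharpens the sum to $4\varphi\bigl(\sqrt{\|y-z\|^2/4+s^2}\bigr)$; a one-variable optimization in $s$ (with optimum $s=\|y-z\|/(2\sqrt{\alpha})$ in the H\"older case $\omega(t)=t^\alpha$) then recovers all the announced constants. Finally, for Part $(4)$, adding $(W^{1,\omega})$ to its $y\leftrightarrow z$ swap cancels the linear-in-$G$ terms and gives $2\varphi^{*}(\|G(y)-G(z)\|_{*}/(2M))\leq\varphi(\|y-z\|)$; should $\|G(y)-G(z)\|_{*}/(2M)$ exceed $2\omega(\|y-z\|)$, the Young identity $\varphi^{*}(\omega(a))=a\omega(a)-\varphi(a)$ together with the crude lower bound $\int_{\omega(a)}^{2\omega(a)}\omega^{-1}(s)\,ds\geq a\omega(a)$ would force $\varphi^{*}(\|G(y)-G(z)\|_{*}/(2M))\geq 2a\omega(a)-\varphi(a)\geq\varphi(a)>\varphi(a)/2$, a contradiction. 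The main technical obstacle is confined to Part $(3)$: one must design the symmetric test-point pair so that addition of the two instances of $(mg^{1,\omega})$ eliminates every uncontrolled term while preserving a linear-in-$h$ contribution; once that geometric setup is fixed, everything else reduces to Fenchel--Young, convexity of $\varphi$, and (in the Hilbert case) Lemma \ref{constantnormomega} or Jensen's inequality.
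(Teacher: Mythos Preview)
Your proposal is correct and follows essentially the same approach as the paper: the affine rearrangement $v=x-\tfrac{1}{2}(y+z)$, the Fenchel--Young cancellation in Part~(1), the choice of test point $x=\tfrac{1}{2}(y+z)+v$ combined with Lemma~\ref{constantnormomega} (Hilbert) or triangle/convexity (Banach) in Parts~(2)--(3), the concavity of $t\mapsto\varphi(\sqrt{t})$ for the Hilbert refinements, and the symmetrized $(W^{1,\omega})$ in Part~(4) are exactly the paper's ingredients. The only cosmetic differences are that you minimize over $v$ via the Fenchel formula $\inf_v\{\langle\xi,v\rangle+c\,\varphi(\|v\|)\}=-c\,\varphi^*(\|\xi\|_*/c)$ where the paper instead plugs in a near-optimal $v$ explicitly, and in Part~(4) you argue by contradiction with the crude bound $\int_{\omega(a)}^{2\omega(a)}\omega^{-1}\geq a\omega(a)$ rather than the paper's Jensen dichotomy; both variants yield the same constants.
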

\begin{proof}
$(1)$ Let $y,z\in E$ and $x\in X.$ By the assumption we have
$$
f(y) \geq f(z) + \tfrac{1}{2} \langle G(z)+G(y), y-z \rangle - M \varphi(\|z-y\|) + (2M  \varphi)^* \left( \|G(y)-G(z)\|_{*} \right).
$$
Here we have used that $(2M\varphi)^*(t)=2M \varphi^*(\frac{t}{2M});$ see Proposition \ref{elementarypropertiesconjugate}. Employing the preceding inequality we obtain
\begin{align}\label{firstinequalitycomparison}
\nonumber f(y)+&\langle G(y),x-y \rangle  + M \varphi(2\|x-y\|)= f(y)+\langle G(y),z-y \rangle +\langle G(y),x-z \rangle+ M \varphi(2\|x-y\|)  \\
& \nonumber \geq f(z)+ \tfrac{1}{2}\langle G(z)-G(y),y-z \rangle + \langle G(y),x-z \rangle \\
& \nonumber \quad + M \varphi(2\|x-y\|) - M \varphi(\|z-y\|) + (2M  \varphi)^* \left( \|G(y)-G(z)\|_{*} \right) \\
& = f(z)+ \langle G(z), x-z \rangle + \langle G(y)-G(z), x - \tfrac{1}{2}(y+z) \rangle \\
& \nonumber \quad +  M \varphi(2\|x-y\|) - M \varphi(\|z-y\|) + (2M  \varphi)^* \left( \|G(y)-G(z)\|_{*}\right).
\end{align}
By applying the Fenchel-Young inequality (see Proposition \ref{elementarypropertiesconjugate}) for the function $2M \varphi$ we get
\begin{align*}
  \big \langle G(y)-G(z),& \, x - \tfrac{1}{2}(y+z) \big  \rangle \geq - \|G(y)-G(z)\|_{*} \left \| x - \tfrac{1}{2}(y+z) \right \| \geq \\
 & -2M \varphi \left( \left \| x - \tfrac{1}{2}(y+z) \right \| \right)-(2M\varphi)^*(\|G(y)-G(z)\|_{*} ).
\end{align*}
By plugging this inequality into \eqref{firstinequalitycomparison}, we get
\begin{align}\label{secondtinequalitycomparison}
& \nonumber f(y)+\langle G(y),x-y \rangle  + M \varphi(2\|x-y\|) \\
&  \geq f(z)+ \langle G(z), x-z \rangle  -2M \varphi \left( \left \| x - \tfrac{1}{2}(y+z) \right \| \right)+ M \varphi(2\|x-y\|) - M \varphi(\|z-y\|).
\end{align}
Now observe that, since $\varphi$ is convex, we have that $\varphi(t+s) \leq \tfrac{1}{2} \varphi(2t)+ \tfrac{1}{2} \varphi(2s)$ for every $t,s \geq 0.$ This yields
\begin{align*}
-2\varphi & \left( \left \| x - \tfrac{1}{2}(y+z) \right \| \right)+  \varphi(2\|x-y\|) -  \varphi(\|z-y\|) \\
& \geq - \varphi(\|x-y\|)-\varphi(\|x-z\|) +\varphi(2\|x-y\|)-\tfrac{1}{2}\varphi(2\|x-z\|)-\tfrac{1}{2}\varphi(2\|x-y\|) \\
& \geq -\varphi(2\|x-z\|)+\tfrac{1}{2}\varphi(2\|x-y\|)- \varphi(\|x-y\|) \geq -\varphi(2\|x-z\|).
\end{align*}
By plugging this inequality in \eqref{secondtinequalitycomparison}, we deduce
$$
f(y)+\langle G(y),x-y \rangle  + M \varphi(2\|x-y\|) \geq f(z)+ \langle G(z), x-z \rangle  -M\varphi(2\|x-z\|).
$$

\medskip

$(2)$ Let us fix $y,z\in E.$ In the case when $G(y)=G(z),$ condition $(mg^{1,\omega})$ gives
$$
f(z) \leq f(y) +  \langle G(y) ,z-y \rangle + M \varphi(\|y-z\|) 
$$
and the inequality defining condition $(W^{1,\omega})$ with constant $M>0$ follows immediately. Let us assume that $G(y) \neq G(z).$ Given $\varepsilon>0,$ there exists $v=v_\varepsilon \in X$ such that
$$
\|v\| = \tfrac{1}{2}\omega^{-1} \left( \tfrac{\|G(y)-G(z)\|_*}{2M} \right) \quad \text{and} \quad \langle G(z)-G(y) , v \rangle \geq \tfrac{\|G(y)-G(z)\|_*}{2}\omega^{-1} \left( \tfrac{\|G(y)-G(z)\|_*}{2M} \right) -\varepsilon.
$$
Define $x := \tfrac{1}{2}(y+z)+ v.$ Using condition $(mg^{1,\omega})$  we have 
\begin{align}\label{intermediateinequalityintrinsicextrinsicbanach}
f(z) & \leq f(y)+  \langle G(y), x-y \rangle - \langle G(z) , x-z \rangle + M \varphi( \| x-y\|) + M\varphi (\| x-z \| )   \\ 
& = f(y)+ \tfrac{1}{2} \langle G(y)+G(z),  z-y \rangle +\langle G(y)- G(z) , v \rangle + M  \varphi \left( \left \|\tfrac{1}{2}(z-y)+v \right \| \right) +  M \varphi \left(  \left \| \tfrac{1}{2}(y-z)+v  \right \| \right) \notag \\
& \leq f(y)+ \tfrac{1}{2}  \langle G(y)+G(z) , z-y  \rangle +  \langle G(y)- G(z), v \rangle + 2M  \varphi \left( \left \|\tfrac{1}{2}(z-y) \right \|+ \|v\| \right) \notag  \\
& \leq f(y)+ \tfrac{1}{2}  \langle G(y)+G(z),  z-y \rangle +  \langle G(y)- G(z),v \rangle + M \varphi(2\|v\|) +  M  \varphi \left(  \|y-z  \|\right) \notag,
\end{align}
where the last inequality follows from the convexity of $\varphi.$ The identity $t \omega(t)=\varphi(t)+\varphi^*(\omega(t))$ (see Proposition \ref{relationvarphivarphistar}) for $t=2\|v\|$ gives
$$
\varphi(2\|v\|) = 2 \|v\|\omega(2\|v\|) - \varphi^*(\omega(2\|v\|)) = \tfrac{\|G(y)-G(z)\|_*}{2M}\omega^{-1}\left( \tfrac{\|G(y)-G(z)\|_*}{2M} \right) - \varphi^*\left(\tfrac{\|G(y)-G(z)\|_*}{2M} \right).
$$ 
Using this inequality and the properties of $v$ we obtain
\begin{align*}
\langle G(y)- G(z) ,v \rangle + M \varphi(2\|v\|) & \leq -\tfrac{\|G(y)-G(z)\|_*}{2}\omega^{-1} \left( \tfrac{\|G(y)-G(z)\|_*}{2M} \right)+\varepsilon  + M \varphi(2\|v\|) \\
& = -M \varphi^*\left(\tfrac{\|G(y)-G(z)\|_*}{2M} \right) + \varepsilon.
\end{align*}
And observe that $\varphi^*(s) = \int_0^s \omega^{-1}(r) dr = 2 \int_0^{s/2} \omega^{-1}(2r) dr \geq 4 \varphi^*(s/2).$ Combining all these inequalities we get
$$
f(z) \leq  f(y)+ \tfrac{1}{2} \langle G(z)+G(y),  z-y  \rangle +   M  \varphi \left(  \|y-z  \|\right)- 4M \varphi^*\left(\tfrac{\|G(y)-G(z)\|_*}{4M} \right) + \varepsilon.
$$
Letting $\varepsilon \to 0^+,$ we deduce that $(f,G)$ satisfies condition $(W^{1,\omega})$ with constant $2M.$

Let us now modify the proof so as to obtain $M$ in place $2M$ for condition $(W^{1,\omega})$ under the assumption that $X$ is a Hilbert space. Assuming as above $G(y) \neq G(z),$ we define 
$$
x := \tfrac{1}{2}(y+z)+ v, \:  \textrm{ where } \:   v := \frac{\omega^{-1}\left( \frac{| G(y)- G(z) | }{2M} \right)}{| G(y)- G(z) | }( G(z)- G(y)).
$$
If we apply Lemma \ref{constantnormomega} for $\lambda=1/2$ in the third inequality of \eqref{intermediateinequalityintrinsicextrinsicbanach}, we obtain
\begin{align*}
f(z) & \leq f(y) + \tfrac{1}{2} \langle G(y)+ G(z), z-y \rangle + \langle G(y)- G(z), v \rangle + 2M \varphi( | v | ) + M \varphi( | y-z| )  \\
& =f(y) + \tfrac{1}{2} \langle G(y)+ G(z), z-y \rangle +M \varphi( | y-z| ) +2M\left( - s \omega^{-1}(s)+\varphi( \omega^{-1}(s) ) \right),
\end{align*}
where we have denoted $s = \frac{1}{2M} | G(y)-G(z) |.$ By Proposition \ref{relationvarphivarphistar}, we have that $\varphi( \omega^{-1}(s) )-s \omega^{-1}(s) = -\varphi^*(s).$ This immediately implies the inequality defining condition $(W^{1,\omega})$ with constant $M.$

\medskip

$(3)$ Let $y,z\in E$ and $v\in X.$ For the point $x = \frac{1}{2}(y+z)+v,$ condition $(mg^{1,\omega})$ gives
$$
f(z) \leq f(y) + \tfrac{1}{2} \langle G(y)+G(z), z-y \rangle  + \langle G(y)-G(z), v \rangle +   M \varphi\left( \| \tfrac{1}{2}(z-y)+v \| \right) + M \varphi\left( \| \tfrac{1}{2}(y-z)+v \| \right).
$$
Reversing the roles of $y$ and $z,$ and taking $x= \frac{1}{2}(y+z)-v$ in condition $(mg^{1,\omega})$ we obtain
$$
f(y) \leq f(z) + \tfrac{1}{2}  \langle G(y)+G(z),y-z \rangle  + \langle G(y)-G(z), v \rangle +   M\varphi\left( \| \tfrac{1}{2}(y-z)-v \| \right) + M\varphi\left( \| \tfrac{1}{2}(z-y)-v \| \right).
$$
By summing both inequalities we have
\begin{equation}\label{firstestimateMomegaG}
\langle G(z)-G(y) , v \rangle \leq  M\varphi\left( \| \tfrac{1}{2}(z-y)+v \| \right) + M\varphi\left( \| \tfrac{1}{2}(y-z)+v \| \right) \leq 2 M \varphi\left( \tfrac{\|y-z \|}{2} + \|v\| \right).
\end{equation}

These estimates  hold for any $v\in X$, and in particular for every $v\in X$ with $\|v\|=\|y-z\|.$ Then, using that $\varphi$ is convex, we conclude
\begin{align*}
\|G(y)-G(z)\|_* & \leq \frac{2 M  \varphi \left( \tfrac{3}{2} \|y-z\| \right)}{\|y-z\|} =   \frac{2 M \varphi \left( \tfrac{1}{2}\|y-z\| + \|y-z\| \right)}{\|y-z\|} \leq  M\frac{\varphi(\|y-z\|) + \varphi(2\|y-z \|)}{\|y-z\|} \\
& \leq  M\omega \left( \tfrac{\|y-z\|}{2} \right) + 2 M \omega(\|y-z\|)  \leq 3 M  \omega(\|y-z\|).
\end{align*}

Let us now assume that $X$ is a Hilbert space. Note that the function $[0, \infty) \ni t \mapsto \varphi( \sqrt{t} )$ is concave because its derivative is $(0, \infty) \ni t \mapsto \omega(\sqrt{t}) / (2\sqrt{t}),$ which is non-increasing because so is $s \mapsto \omega(s) / s.$ Using the concavity of $\varphi( \sqrt{\cdot} )$ in \eqref{firstestimateMomegaG} we obtain
\begin{align*}
\langle G(z)-G(y), v \rangle & \leq M \varphi  \left( \left | \tfrac{1}{2} (z-y) + v \right | \right) + M \varphi \left( \left | \tfrac{1}{2} (y-z) + v \right | \right) \\
& \leq 2 M \varphi \left( \sqrt{\tfrac{1}{2}\left( \left | \tfrac{1}{2} (z-y) + v \right |^2 + \left | \tfrac{1}{2} (y-z) + v \right |^2 \right) }\right)  = 2 M\varphi \left( \sqrt{|v|^2 + \left | \tfrac{1}{2} (z-y) \right |^2} \right).
\end{align*}
Writing $|v|=t |y-z|$ with $t>0$ and using Proposition \ref{relationvarphivarphistar} we deduce
\begin{equation} \label{secondestimateMomegaGHilbert}
\left \langle G(z)-G(y), \frac{v}{|v|} \right \rangle  \leq \frac{2 M}{t|y-z|} \varphi \left( \sqrt{t^2 + \tfrac{1}{4}} \, |y-z|\right) \leq \frac{2M}{t} \sqrt{t^2 + \tfrac{1}{4}} \: \omega \left( \frac{\sqrt{t^2 + \tfrac{1}{4}} \, |y-z|}{2} \right).
\end{equation}

Taking $t=\sqrt{15}/2$ and $t=\sqrt{3}/2$ in \eqref{secondestimateMomegaGHilbert}, the desired estimate follows immediately.

Finally, assume that $X$ is a Hilbert space and $\omega(t)=t^\alpha$ for $\alpha \in (0,1].$ From \eqref{secondestimateMomegaGHilbert} we derive
$$
\left \langle G(z)-G(y), \frac{v}{|v|} \right \rangle  \leq \frac{2M}{t|y-z|} \varphi \left( \sqrt{t^2 + \tfrac{1}{4}} \, |y-z|\right)= \frac{2M}{(1+\alpha) \, t } \left( t^2 + \frac{1}{4} \right)^{\frac{1+\alpha}{2}} |y-z|^\alpha;
$$
where $|v|=t |y-z|$ for $t>0.$ It is straightforward to see that the function $0<t \mapsto h(t)= \frac{2(t^2 + 1/4)^{\frac{1+\alpha}{2}}}{(1+ \alpha) t}$ attains a global minimum at $t_0= 1/(2 \sqrt{\alpha})$ and $h(t_0)=\frac{2^{1-\alpha}}{\sqrt{1+\alpha}} \left( 1+\frac{1}{\alpha}\right)^{\alpha/2}.$ The desired estimate easily follows.

\medskip

$(4)$ Given $y,z\in E,$ we have
\begin{align*}
f(z) \leq f(y) + \tfrac{1}{2} \langle G(y)+G(z), z-y \rangle + M \varphi(\|y-z\|) - 2M \varphi^* \left( \tfrac{\|G(y)-G(z)\|_*}{2M} \right)\\
f(y) \leq f(z) + \tfrac{1}{2} \langle G(z)+G(y), y-z \rangle + M \varphi(\|z-y\|) - 2M \varphi^* \left( \tfrac{\|G(z)-G(y)\|_*¨}{2M} \right).
\end{align*}
By summing both inequalities we easily get 
$$
\varphi^*\left( \tfrac{1}{2M} \| G(y)-G(z)\|_{*} \right) \leq \tfrac{1}{2} \varphi( \| y-z \| ).
$$
Applying Jensen's inequality on both sides of the previous inequality (bearing in mind that $\omega^{-1}$ is convex and $\omega$ is concave) we obtain 
$$
\| G(y)-G(y) \|_{*}\, \omega^{-1}\left( \tfrac{1}{4M} \| G(y)-G(z) \|_{*} \right) \leq M \|y-z\| \omega \left( \tfrac{1}{2} \|y-z\| \right).
$$
Thus we have either $\| G(y)-G(z) \|_{*} \leq M \omega\left( \frac{\|y-z\|}{2} \right)$ or $ \omega^{-1}\left( \frac{1}{4M} \| G(y)-G(z)\|_{*} \right) \leq \| y-z\|.$ We conclude that $M_\omega(G) \leq 4M.$ 

\end{proof}

\medskip

Now we show Theorems \ref{Mainthm1}, \ref{Mainthm2}, \ref{MainthmOptimalmg}, \ref{MainthmOptimalmgForSpecialw}, \ref{Mainthm3mgSuperreflexive}, \ref{Mainthm4mgSuperreflexive}, and Corollary \ref{Various estimates corollary}. Part of the proof of these results, as those of Sections \ref{sectionboundedcase} and \ref{sectionlipschitzcase} below, will be deduced from the following technical theorem.

\begin{theorem}\label{MainTechnicalThm}
Let $E$ be a nonempty subset of a Banach space $X$, $C>0$, $(f, G):E\to\R\times X^{*}$, and $\{\psi_{y}^{+}\}_{y\in E}$, $\{\psi_{y}^{-}\}_{y\in E}$ be two families of functions $\psi_{y}^{\pm}:X\to\R$ such that
\begin{enumerate}
\item $\psi_{y}^{-}$ and $-\psi_{y}^{+}$ are strongly $C\varphi$-paraconvex for each $y\in E$;
\item $\psi_{y}^{\pm}(x)=f(y)+\langle G(y), x-y\rangle +o(|x-y|)$;
\item $\psi_{z}^{-}(x)\leq \psi_{y}^{+}(x)$ for all $x\in X$ and all $y, z\in E$.
\end{enumerate}
Let us define functions $m, g, F:X\to\R$ by
\begin{equation}\label{definition of m in terms of psiy}
m(x):=\sup_{z\in E}\psi_{z}^{-}(x),
\end{equation}
\begin{equation}\label{definition of g in terms of psiy}
g(x) :=\inf_{y\in E}\psi_{y}^{+}(x)
\end{equation}
and
\begin{equation}\label{definition of F in terms of psiy}
F(x) :=\sup\{ h(x) \, : \, h\leq g, \,\,\, h \textrm{ is strongly } C\varphi\textrm{-paraconvex}\}.
\end{equation}
Then $F$ and $-F$ are strongly $C\varphi$-paraconvex (so in particular $F\in C^{1, \omega}(X)$, with $A(F, D F)\leq C$), and $(F, D F)=(f, G)$ on $E$.

Moreover, if we also assume that $m$ is Lipschitz, with $\lip(m)\leq L$, then the function $\widetilde{F}:X\to\R$ defined by 
\begin{equation}\label{definition of Ftilde in terms of psiy}
\widetilde{F}(x) :=\sup\{ h(x) \, : \, h\leq g, \,\,\, h \textrm{ is strongly } C\varphi\textrm{-paraconvex and L-Lipschitz}\}
\end{equation}
is of class $C^{1, \omega}(X)$ and Lipschitz, with $A(\widetilde{F}, D \widetilde{F})\leq C$ and $\lip(\widetilde{F})\leq L$.
\end{theorem}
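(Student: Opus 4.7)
The first observation is that condition~(3), taken with $\sup_{z\in E}$ on the left and $\inf_{y\in E}$ on the right of $\psi_z^-(x)\leq\psi_y^+(x)$, gives $m\leq g$ on $X$. Since strong $C\varphi$-paraconvexity is preserved under pointwise suprema over a family with a common constant, $m=\sup_{z\in E}\psi_z^-$ is itself strongly $C\varphi$-paraconvex; hence $m$ is a valid competitor in the sup defining $F$, so $m\leq F\leq g$. Evaluating condition~(2) at $y\in E$ yields $\psi_y^{\pm}(y)=f(y)$, which together with $m\leq g$ forces $m(y)=F(y)=g(y)=f(y)$; the sandwich $\psi_y^{-}(x)\leq F(x)\leq\psi_y^{+}(x)$ plus the $o(\|x-y\|)$ remainders in~(2) then give Fr\'echet differentiability of $F$ at $y$ with $DF(y)=G(y)$.

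\textbf{Step 2 ($F$ is strongly $C\varphi$-paraconvex).} This is routine: for any $x_1,x_2\in X$, $t\in[0,1]$, and any competitor $h$ in the family defining $F$, strong $C\varphi$-paraconvexity of $h$ combined with $h\leq F$ gives
\[
h(tx_1+(1-t)x_2)\leq tF(x_1)+(1-t)F(x_2)+Ct(1-t)\varphi(\|x_1-x_2\|),
\]
and taking the sup over $h$ transfers this inequality to $F$.

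\textbf{Step 3 (main obstacle: $-F$ is strongly $C\varphi$-paraconvex).} This is the heart of the proof; I must show
\[
F(ta+(1-t)b)\geq tF(a)+(1-t)F(b)-Ct(1-t)\varphi(\|a-b\|)\qquad(a,b\in X,\ t\in[0,1]).
\]
The crucial input is that $-g$ is strongly $C\varphi$-paraconvex too: by condition~(1) each $-\psi_y^{+}$ is, and $-g=\sup_y(-\psi_y^{+})$ inherits this property from the same preservation principle used in Step~2 (equivalently, $g$ is strongly $C\varphi$-paraconcave). The na\"\i ve attempt to combine near-maximizers $h_a,h_b$ of the sup defining $F$ at $a$ and $b$ by taking their pointwise maximum or a convex combination fails, because paraconvexity of such an $h$ yields only an \emph{upper} bound on $h(ta+(1-t)b)$, not the lower bound required. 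The plan is instead to construct, for any $\varepsilon>0$, a single competitor $h^{*}$ of the elementary form $a_0+\langle\xi_0,\cdot-z_0\rangle-C\varphi(\|\cdot-z_0\|)$ (as furnished by Lemma~\ref{strongly paraconvex functions are uniformly proximal subdifferentiable everywhere}), tailored at the midpoint $z=ta+(1-t)b$, so that
\[
h^{*}(z)\geq tF(a)+(1-t)F(b)-Ct(1-t)\varphi(\|a-b\|)-\varepsilon.
\]
The delicate point is to verify $h^{*}\leq g$ globally; this is where the strong $C\varphi$-paraconvexity of $-g$ enters, producing upper bounds on $g$ of the form $g(\cdot)\leq g(x_0)+\langle\eta_{x_0},\cdot-x_0\rangle+C\varphi(\|\cdot-x_0\|)$ which absorb any potential excess of $h^{*}$ over $g$ by matching its $C\varphi$ term. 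Once $h^{*}$ is a valid competitor, $F(z)\geq h^{*}(z)$, and sending $\varepsilon\to 0$ closes the argument.

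\textbf{Step 4 (conclusion and the Lipschitz variant).} With both $F$ and $-F$ strongly $C\varphi$-paraconvex, Proposition~\ref{if u and -u are strongly paraconvex then u is unoformly differentiable} delivers $F\in C^{1,\omega}(X)$ with $A(F,DF)\leq C$. For the moreover statement about $\widetilde F$, the hypothesis $\lip(m)\leq L$ makes $m$ a valid $L$-Lipschitz competitor in the restricted sup defining $\widetilde F$, so $m\leq\widetilde F\leq g$ and $\widetilde F$ is itself $L$-Lipschitz (the pointwise sup of a bounded family of $L$-Lipschitz functions is $L$-Lipschitz). The arguments of Steps~1--3 then go through essentially verbatim once one checks that each operation used (supremum, maximum, and the splicing in Step~3) preserves the $L$-Lipschitz constraint, yielding $\widetilde F\in C^{1,\omega}(X)$ with $A(\widetilde F,D\widetilde F)\leq C$ and $\lip(\widetilde F)\leq L$.
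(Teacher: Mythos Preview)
Your Steps 1, 2, and the conclusion in Step 4 match the paper's argument. The gap is in Step 3, which is the heart of the matter: you announce a plan to build an elementary competitor $h^{*}(w)=a_0+\langle\xi_0,w-z_0\rangle-C\varphi(\|w-z_0\|)$ at the point $z=ta+(1-t)b$ with the desired value there and $h^{*}\leq g$ globally, but you never actually specify $a_0,\xi_0,z_0$ or carry out the verification. The sentence about the paraconcave upper bounds on $g$ ``matching the $C\varphi$ term'' of $h^{*}$ is not an argument: an upper bound $g(w)\leq g(x_0)+\langle\eta_{x_0},w-x_0\rangle+C\varphi(\|w-x_0\|)$ does not by itself force $h^{*}(w)\leq g(w)$; one still needs to compare two functions, each carrying its own linear part and its own $C\varphi$ term centered at a different point, and nothing in your sketch explains how that comparison closes. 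As written, Step~3 is a statement of intent, not a proof.

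The paper avoids this difficulty by a clean trick that you should see. Instead of manufacturing an elementary competitor, it uses $F$ itself (already known to be strongly $C\varphi$-paraconvex from Step~2) to build the competitor
\[
h(w):=t\,F\bigl(w+(1-t)(a-b)\bigr)+(1-t)\,F\bigl(w+t(b-a)\bigr).
\]
Strong $C\varphi$-paraconvexity is preserved by translations and by convex combinations, so $h$ is strongly $C\varphi$-paraconvex; and since $F\leq g$ and $-g$ is strongly $C\varphi$-paraconvex, one gets directly $h(w)\leq t\,g(w+(1-t)(a-b))+(1-t)\,g(w+t(b-a))\leq g(w)+Ct(1-t)\varphi(\|a-b\|)$. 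Hence $h-Ct(1-t)\varphi(\|a-b\|)$ is a legitimate competitor, so it is $\leq F$; evaluating at $w=ta+(1-t)b$, where $h$ equals exactly $tF(a)+(1-t)F(b)$, gives the required inequality with no $\varepsilon$. This also makes the Lipschitz variant immediate: if $\widetilde F$ is $L$-Lipschitz then so is each translate and hence so is $h$, whereas your elementary functions $w\mapsto a_0+\langle\xi_0,w-z_0\rangle-C\varphi(\|w-z_0\|)$ are typically \emph{not} Lipschitz when $\omega$ is unbounded, so your ``splicing preserves the $L$-Lipschitz constraint'' claim in Step~4 would fail for that family.
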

\begin{proof}

Condition $(3)$ is obviously equivalent to saying that $m$ and $g$ are finite everywhere and satisfy $m\geq g$ on $X.$ Also, if $y\in E,$ it is obvious that $m(y) \geq f(y)$ and $g(y) \leq f(y),$ which implies $m(y)=g(y)=f(y).$ Thus we have
\begin{equation}\label{properties of m g and f}
m(x)\leq  g(x) \textrm{ for all } x\in X, \textrm{ and } m(y)=g(y)=f(y) \textrm{ for all } y\in E.
\end{equation}

\begin{lemma}\label{paraconvexitygm}
{\em The functions $F$, $m$ and $-g$ are strongly $C\varphi$-paraconvex. Also, $F=f$ on $E.$}
\end{lemma}
\begin{proof}
That $m$ and $-g$ satisfy the lemma follows from the elementary observation that the supremum of a family of strongly $C\varphi$-paraconvex functions is strongly $C\varphi$-paraconvex. Once we know that $m$ is strongly $C\varphi$-paraconvex, since $m\leq g$ on $X$, we deduce that $F$ is well defined, with $m \leq F \leq g$ on $X.$ According to \eqref{properties of m g and f}, this implies $F=f$ on $E.$ Finally, applying the mentioned observation again, we obtain that $F$ is strongly $C\varphi$-paraconvex as well.
\end{proof}

\begin{lemma}
{\em The function $-F$ is strongly $C\varphi$-paraconvex.}
\end{lemma} 
\begin{proof}
Fix $x,y\in X$ and $\lambda \in [0,1]$ and define the function
$$
h(z):= \lambda F(z+(1-\lambda)(x-y))+(1-\lambda) F(z+\lambda (y-x)), \quad z\in X.
$$
Using that $F$ is strongly $C\varphi$-paraconvex it is straightforward to check that $h$ is strongly $C\varphi$-paraconvex as well. Also, since $F \leq g,$ we have that
$$
h(z) \leq \lambda g(z+(1-\lambda)(x-y))+(1-\lambda) g(z+\lambda (y-x)) \leq g(z)+ \lambda (1-\lambda) C\varphi(\|x-y\|) \quad \text{for all} \quad z\in X;
$$
where the last inequality follows from the fact that $-g$ is strongly $C\varphi$-paraconvex; see Lemma \ref{paraconvexitygm}. We have thus shown that $h-\lambda (1-\lambda) C\varphi(\|x-y \|)$ is strongly $C\varphi$-paraconvex and less than or equal to $g.$ By the definition of $F,$ we must have $h- \lambda (1-\lambda) C\varphi(\|x-y \|) \leq F.$ In particular,
$$
F(\lambda x +(1-\lambda)y) \geq h(\lambda x+(1-\lambda)y) - \lambda (1-\lambda) C \varphi(\| x-y \|)  = \lambda F(x)+(1-\lambda) F(y)- \lambda (1-\lambda) C\varphi(\|x-y \|).
$$
This proves the lemma. 
\end{proof}

\begin{lemma}
{\em We have $F\in C^{1,\omega}(X)$, with $D F=G$ on $E$, and 
$$
\| D F(x)-D F(y)\|_* \leq 3 C \omega(\|x-y \|)  \quad \text{for all} \quad x,y\in X.
$$}
\end{lemma} 
\begin{proof}
We already know that both $F$ and $-F$ are strongly $C\varphi$-paraconvex. Then by Proposition \ref{if u and -u are strongly paraconvex then u is unoformly differentiable} we have that $F$ is of class $C^{1, \omega}(X)$, with
\begin{align*}
\| D F(x)-D F(y)\|_* \leq 3 C \omega(\|x-y \|)
\end{align*}
for all $x,y \in X$, and also that
$$
A(F, D F)\leq C.
$$

Finally, let us check that $D F=G$ on $E$. By the definitions of $m$ and $g$ and the fact that $m\leq F\leq g$ on $X$  we have, for every $y\in E$ and $x\in X$,
$$
\psi_{y}^{-}(x)\leq m(x)\leq F(x)\leq g(x)\leq \psi_{y}^{+}(x),
$$
that is, $\psi_{y}^{-}\leq F\leq \psi_{y}^{+}$ on $X$, and since by condition $(2)$ we also know that $\psi_{y}^{\pm}$ is differentiable at $y$, with $D\psi_{y}^{\pm}(y)=G(y)$ and $\psi_{y}^{\pm}(y)=f(y)=F(y)$, we conclude that $DF(y)=G(y)$.
\end{proof}
The proof of Theorem \ref{MainTechnicalThm} is complete. 
\end{proof}

\medskip

\begin{proof}[{\bf Proofs of Theorems \ref{Mainthm1}, \ref{Mainthm2} and \ref{MainthmOptimalmg}.}]

Let us first note that in the case that $A(f, G)=0$ our results are trivial. Indeed, if $A(f, G)=0$, we may fix a point $z_0\in E$ and we have $f(y)+\langle G(y), x-y\rangle=f(z_0)+\langle G(z_0), x-z_0\rangle$ for all $y\in E$, $x\in X$; then the affine function $F(x):=f(z_0)+\langle G(z_0), x-z_0\rangle,  \,x\in X$, has the property that $(F, \nabla F)=(f, G)$ on $E$.  On the other hand, it is clear that $A(f, G)$ is the infimum of all constants $M>0$ for which $(mg^{1,\omega})$ holds. In particular, $A(f, G)=0$ if and only if $(f, G)$ satisfies condition $(mg^{1, \omega})$ with all $M>0$.

According to these observations, in our proofs we may assume that $A(f,G)>0$. Also note that if $A(f,G)$ is finite and strictly positive then $(f,G)$ satisfies condition $(mg^{1, \omega})$ with $M=A(f,G)$.

Let us start with the proof of Theorem \ref{MainthmOptimalmg}. To prove the necessity of $(mg^{1, \omega})$, which is obviously equivalent to $A(f, G)<\infty$, we just use Taylor's theorem: assuming $F\in C^{1, \omega}(X)$, $(F, \nabla F)=(f,G)$ on $E$, and $|\nabla F(u)-\nabla F(v)|\leq M \omega(|u-v|)$ for all $u,v\in X$, we have
\begin{eqnarray*}
& & F(z)+\langle\nabla F(z), x-z\rangle -F(y)-\langle\nabla F(y), x-y\rangle= \\
& & F(z)+\langle\nabla F(z), x-z\rangle-F(x)+F(x) -F(y)-\langle\nabla F(y), x-y\rangle \\
& & \leq M\varphi\left(|x-z|\right)+M\varphi\left(|x-y|\right),
\end{eqnarray*}
for all $x,y,z\in X,$ from which $(mg^{1, \omega})$ follows immediately (in fact this shows that $A(f, G)\leq M$).

Let us now show the sufficiency of condition $(mg^{1, \omega})$. Assume that $(f,G)$ satisfies $(mg^{1,\omega})$ with constant $M:=A(f, G)>0$. For all $y,z\in E,$ define the functions
\begin{align}\label{definitionsmandg}
\psi_{z}^{-}(x) := f(z)+\langle G(z),x-z \rangle - M \varphi(|x-z|), \quad x\in X \\
\psi_{y}^{+}(x) := f(y)+\langle G(y),x-y \rangle + M \varphi(|x-y|), \quad x\in X.
\end{align}
Condition $(mg^{1, \omega})$ tells us that $\psi_{z}^{-}(x)\leq \psi_{y}^{+}(x)$ for all $x\in X$, $y, z\in E$, so the functions $\psi_{y}^{\pm}$ meet condition $(3)$ of Theorem \ref{MainTechnicalThm}, and it is obvious from the definition that they also satisfy condition $(2)$. By Lemma \ref{constantnormomega} we have that $x\mapsto-M\varphi(|x-z|)$ is strongly $2M\varphi$-paraconvex, which immediately implies that the function
$x\mapsto f(z)+\langle G(z),x-z \rangle - M \varphi(|x-z|)=\psi_{z}^{-}(x)$ is strongly $2M\varphi$-paraconvex. Similarly we have that $x\mapsto -f(y)-\langle G(y),x-y \rangle - M \varphi(|x-y|)=-\psi_{y}^{+}(x)$ is strongly $2M\varphi$-paraconvex. Thus the families $\{\psi_{y}^{\pm}\}_{y\in E}$ satisfies all the assumptions of Theorem \ref{MainTechnicalThm}. It follows that 
$$
F(x):= \sup\lbrace h(x) \: : \: h \leq g \: \textrm{ and } \: h \: \textrm{ is } 2M \varphi\text{-strongly paraconvex} \rbrace, \quad x\in X.
$$
is of class $C^{1, \omega}(X)$, where
$$
g(x)= \inf_{y\in E} \lbrace f(y)+\langle G(y),x-y \rangle + M \varphi(|x-y|) \rbrace, \quad x\in X,
$$
and we also have $A(F, \nabla F)\leq 2 A(f, G),$ and $(F,\nabla F)=(f,G)$ on $E$. 
\end{proof}

Theorem \ref{Mainthm2} is an immediate consequence of Theorem \ref{MainthmOptimalmg} and Proposition \ref{equivalence (mg)(W)}$(3)$. Finally, in order to prove Theorem \ref{Mainthm1} we slightly modify the proof of Theorem \ref{MainthmOptimalmg}. Assume that the jet $(f,G)$ satisfies condition ($W^{1,\omega})$ with constant $M>0$ on a subset $E$ of a Hilbert space $X.$ Defining $\widetilde{\varphi} = \varphi( 2 \cdot )$ and $\psi_{y}^{\pm}(x) := f(y)+\langle G(y),x-y \rangle \pm M \widetilde{\varphi}(|x-y|)$ for every $x\in X, \, y \in E,$ we see from the arguments in the proof of Theorem \ref{MainthmOptimalmg} together with Proposition \ref{equivalence (mg)(W)}$(1)$ that the families of functions $\lbrace \psi_{y}^{\pm} \rbrace_{y\in E}$ satisfy all the assumptions of Theorem \ref{MainTechnicalThm} for $C=2M$ and with $\widetilde{\varphi}$ in place of $\varphi.$ Thus if $F$ is the function defined in \eqref{definition of F in terms of psiy} (with $\widetilde{\varphi}$ in place of $\varphi$), then both $F$ and $-F$ are strongly $2 M \widetilde{\varphi}$-paraconvex with $F=f $ and $\nabla F= G$ on $E.$ Proposition \ref{if u and -u are strongly paraconvex then u is unoformly differentiable} tells us that $F \in C^{1,\widetilde{\omega}}(X)$ with $\widetilde{\omega}= 2 \omega( 2 \cdot)$ and 
$$
|\nabla F(x)-\nabla F(y)| \leq (4 / \sqrt{3})(2 M) \, \widetilde{\omega} \left( \frac{|x-y|}{2} \right) = (16/\sqrt{3}) M \omega(|x-y|), \quad x,y\in X.
$$

\begin{proof}[{\bf Proofs of Theorem \ref{MainthmOptimalmgForSpecialw} and Corollary \ref{Various estimates corollary}.}] 
In the preceding proof we may use Lemma \ref{constantnormholder} instead of Lemma \ref{constantnormomega} to obtain that $m$ and $-g$ are strongly $2^{1-\alpha}M\varphi$-paraconvex, and the rest of the proof goes through just replacing $2M$ with $2^{1-\alpha}M$ at the appropriate points, yielding Theorem \ref{MainthmOptimalmgForSpecialw}. 

On the other hand Corollary \ref{Various estimates corollary} is an obvious consequence of previous results and some remarks made in the introduction, together with the following observation. If $\alpha \in (0,1],$ and $\omega(t)=t^\alpha,$ then one can combine Lemma \ref{constantnormholder} and Proposition \ref{if u and -u are strongly paraconvex then u is unoformly differentiable} to improve the estimates of $A(F,\nabla F)$ in Theorem \ref{MainthmOptimalmg} and of the trace seminorm $\|(f,G)\|_{E, \omega}$ in \eqref{estimation of the trace seminorm in general} as follows:
$$
A(F, \nabla F)\leq 2^{1-\alpha} A(f, G) \quad \text{and} \quad A(f,G) \leq  \|(f,G)\|_{E, \omega}\leq \frac{2^{2-2\alpha}}{\sqrt{1+\alpha}} \left( 1+\frac{1}{\alpha}\right)^{\alpha/2} A(f, G).
$$
\end{proof}

\begin{proof}[{\bf Proofs of Theorem \ref{Mainthm3mgSuperreflexive} and \ref{Mainthm4mgSuperreflexive}.}] 
We start with the proof of Theorem \ref{Mainthm3mgSuperreflexive}.  Assume that $X$ is a superreflexive space with an equivalent norm $\| \cdot \|$ satisfying \eqref{modulussmoothnesssuperreflexive} for some $\alpha\in (0,1]$ and $C>0.$ Let $\omega$ be a modulus of continuity such that $t\mapsto t^\alpha / \omega(t)$ is non-decreasing. 

\begin{lemma}\label{supperreflexivesmoothnessgeneralmodulus}
{\em If $\varphi(t)=\int_{0}^{t} \omega(s)ds,$ the function $\psi_\omega = \varphi \circ \| \cdot \|$ satisfies
$$
 \lambda \psi_\omega(x) + (1-\lambda) \psi_\omega(y)- \psi_\omega( \lambda x+(1-\lambda)y ) \leq \lambda (1-\lambda) C^* \varphi( \| x-y\| )
$$  
for every $\lambda \in [0,1]$ and every $x,y\in X;$ with $C^*= 1+ \frac{3^{1+\alpha}}{1+\alpha} C.$ }
\end{lemma}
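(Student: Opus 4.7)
The plan is to set $a=\|x\|$, $b=\|y\|$, $A=\lambda a+(1-\lambda)b$, $B=\|\lambda x+(1-\lambda)y\|$, and $r=\|x-y\|$; note $B\leq A$ by the triangle inequality. I split
\[
\lambda\psi_\omega(x)+(1-\lambda)\psi_\omega(y)-\psi_\omega(\lambda x+(1-\lambda)y) \;=\; D_1 + D_2,
\]
where $D_1=\lambda\varphi(a)+(1-\lambda)\varphi(b)-\varphi(A)$ is a purely one-dimensional convexity deficit of $\varphi$ and $D_2=\varphi(A)-\varphi(B)$ measures the triangle-inequality slack. Both pieces are nonnegative. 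The target is $D_1\leq \lambda(1-\lambda)\varphi(r)$ (contributing the ``$1$'' in $C^*$) and $D_2\leq \lambda(1-\lambda)\frac{3^{1+\alpha}}{1+\alpha}C\varphi(r)$.

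\textbf{Bound on $D_1$.} Assume without loss of generality $a\leq b$ and set $h=b-a$, so $A-a=(1-\lambda)h$ and $b-A=\lambda h$. Writing $D_1=-\lambda\int_a^A\omega\,ds+(1-\lambda)\int_A^b\omega\,ds$, the substitutions $s=A-(1-\lambda)w$ in the first integral and $s=A+\lambda w$ in the second (both with $w\in[0,h]$) give
\[
D_1 \;=\; \lambda(1-\lambda)\int_0^h\bigl[\omega(A+\lambda w)-\omega(A-(1-\lambda)w)\bigr]\,dw.
\]
The two arguments of $\omega$ differ by exactly $w$, and since $\omega$ is subadditive (being concave and vanishing at $0$) the bracket is at most $\omega(w)$. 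Hence $D_1\leq \lambda(1-\lambda)\int_0^h\omega(w)\,dw=\lambda(1-\lambda)\varphi(h)\leq \lambda(1-\lambda)\varphi(r)$. The requirement $A-(1-\lambda)w\geq 0$ is automatic because $A-(1-\lambda)h=a\geq 0$.

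\textbf{Bound on $D_2$.} This step combines the power-type smoothness \eqref{modulussmoothnesssuperreflexive} with the monotonicity hypothesis. Jensen's inequality for $t\mapsto t^{1+\alpha}$ gives $A^{1+\alpha}\leq \lambda a^{1+\alpha}+(1-\lambda)b^{1+\alpha}$, so \eqref{modulussmoothnesssuperreflexive} yields $A^{1+\alpha}-B^{1+\alpha}\leq \lambda(1-\lambda)\,2^{-\alpha}C\,r^{1+\alpha}$. On the other hand, expressing $x$ and $y$ as $\lambda x+(1-\lambda)y$ plus the appropriate multiple of $x-y$ and using the triangle inequality gives the complementary bound $A-B\leq 2\lambda(1-\lambda)r$. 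Setting $g(s):=\omega(s)/s^\alpha$ (nonincreasing by hypothesis) and writing $D_2=\int_B^A g(s)\,s^\alpha\,ds$, I split into two regimes. When $B\geq r$, monotonicity of $g$ gives $g(s)\leq g(r)$ on $[B,A]$, so the $A^{1+\alpha}-B^{1+\alpha}$ bound together with the identity $\omega(r)r\leq (1+\alpha)\varphi(r)$ (immediate from nonincreasingness of $g$) yields $D_2\leq \lambda(1-\lambda)\,2^{-\alpha}C\,\varphi(r)$. When $B<r$, the triangle inequality forces $A\leq B+r/2\leq 3r/2$, so $\omega(s)\leq \omega(3r/2)\leq (3/2)\omega(r)$ on $[B,A]$ (using $\omega(ct)\leq c\omega(t)$ for $c\geq 1$); combined with $A-B\leq 2\lambda(1-\lambda)r$ this gives $D_2\leq 3\lambda(1-\lambda)\omega(r)r\leq 3(1+\alpha)\lambda(1-\lambda)\varphi(r)$. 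The two cases are then subsumed under the unified constant $\frac{3^{1+\alpha}}{1+\alpha}C$ by exploiting $C\geq 2$, which follows from evaluating \eqref{modulussmoothnesssuperreflexivemidpoints} at $u=0$.

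\textbf{Main obstacle.} The delicate point is the $D_2$ estimate in the intermediate regime around $B\approx r$, where the ``small-$B$'' estimate based on the triangle inequality (which has the right $\lambda(1-\lambda)$ dependence but overestimates $\omega(A)$) must be traded off against the ``large-$B$'' estimate based on the power smoothness (which exploits the power growth but degrades when $B$ is small). Packaging both estimates under the single constant $\frac{3^{1+\alpha}}{1+\alpha}C$ requires that neither bound alone is used over its whole range of applicability, and uses the structural lower bound $C\geq 2$ to absorb the $C$-independent ``$3(1+\alpha)$'' piece into the main ``$\frac{3^{1+\alpha}}{1+\alpha}C$'' piece. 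The key tools that make every step dimensionless in the right way are the two consequences of $t\mapsto \omega(t)/t^\alpha$ being nonincreasing, namely $\omega(ct)\leq c\omega(t)$ for $c\geq 1$ and $\omega(t)\,t\leq (1+\alpha)\varphi(t)$.
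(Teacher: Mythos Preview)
Your proof is correct and follows a genuinely different route from the paper. The paper first establishes the pointwise derivative bound $\|D\psi_\omega(x)-D\psi_\omega(y)\|_*\leq C^*\omega(\|x-y\|)$ (by writing $D\psi_\omega(x)=\omega(\|x\|)\,D\|\cdot\|(x)$, invoking Proposition~\ref{if u and -u are strongly paraconvex then u is unoformly differentiable} for $\|\cdot\|^{1+\alpha}$, and then using the monotonicity of $t^\alpha/\omega(t)$), and only afterwards integrates along the segment to obtain the paraconvexity inequality. You instead bypass derivatives entirely, splitting the defect into a scalar convexity term $D_1$ for $\varphi$ on the real line and a geometric slack term $D_2=\varphi(A)-\varphi(B)$ coming from the failure of the triangle inequality to be an equality. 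Your $D_1$ bound via subadditivity of $\omega$ is clean and sharp, and your case split on $D_2$ (trading the power-smoothness bound on $A^{1+\alpha}-B^{1+\alpha}$ against the elementary bound $A-B\leq 2\lambda(1-\lambda)r$ according to whether $B\gtrless r$) is a nice idea; the observation $C\geq 2$ from \eqref{modulussmoothnesssuperreflexivemidpoints} at $u=0$ is exactly what is needed to absorb the $C$-independent case into the stated constant. The paper's approach has the advantage of delivering the stronger conclusion $M_\omega(D\psi_\omega)\leq C^*$ as a byproduct (which is not needed here but is of independent interest), whereas your argument is more elementary in that it avoids differentiating the norm and never leaves the level of the functional inequality.
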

\begin{proof}
Let us first estimate $M_\omega(D \psi_\omega).$ Combining \eqref{modulussmoothnesssuperreflexive} with Proposition \ref{if u and -u are strongly paraconvex then u is unoformly differentiable}, we obtain that $\psi_\alpha:= (1/(1+\alpha)) \| \cdot \|^{1+\alpha}$ is of class $C^{1,\alpha}(X)$ with $ \| D\psi_\alpha(x)-D \psi_\alpha (y) \|_* \leq L \|x-y\|^\alpha$ for all $x,y\in X$, where $L:=\frac{2^{-2\alpha} \, 3^{1+\alpha} C}{1+\alpha}.$ It is then obvious that the norm $\| \cdot \|$ is differentiable on $X \setminus  \lbrace 0 \rbrace,$ with $D \| \cdot \| (u) = \left( \|u\|^{1+\alpha}\right)^{\frac{1}{1+\alpha}-1} D \psi_\alpha (u).$ In particular, for any $u,v\in S_X,$ the following inequality holds
$$
\left \| D \| \cdot \| (u)- D \| \cdot \| (v) \right \|_* = \left \| D \psi_\alpha (u)- D \psi_\alpha(v) \right \|_* \leq L \| u-v\|^\alpha.
$$
Now, for any $x,y \in X \setminus \lbrace 0 \rbrace,$ assume for instance that $\| x \| \geq \|y \|$ and write
\begin{align*}
& \left \| D \psi_\omega (x)  - D \psi_\omega(y) \right \|_*  = \Big \| \omega(\| x \|) D \| \cdot \| (x)- \omega(\|y\|) D \| \cdot \| (y) \Big \|_*  \\
 & \leq | \omega ( \|x\|) - \omega(\|y\|) | \| D \| \cdot \| (y) \|_* + \omega( \|x\| ) \Big \| D \| \cdot \| (x)- D \| \cdot \| (y) \Big \|_* \\
 & \leq \omega( \|x-y\| ) + \omega( \|x\| ) \left \| D \| \cdot \|  \left( \frac{x}{\|x\|} \right)- D \| \cdot \|  \left( \frac{y}{\|y\|} \right) \right \|_* \\
 & \leq \omega( \|x-y\| )+  L \omega(\|x\|)  \left \| \frac{x}{\|x\|}-\frac{y}{\|y\|}  \right \|^\alpha \leq \omega( \|x-y\| )+  2 ^\alpha L  \frac{\omega(\|x\|)}{\|x\|^\alpha} \left \| x-y \right \|^\alpha.
\end{align*}
Using the inequality $\| x \| \geq \frac{1}{2}\| x\| + \frac{1}{2}\|y \| \geq \frac{1}{2} \|x-y\|$ and the fact that $t \mapsto t^\alpha/\omega(t)$ is non-decreasing we obtain 
$$
\left \| D \psi_\omega (x)  - D \psi_\omega(y) \right \|_* \leq \omega( \|x-y\| )+  2 ^\alpha L \frac{\omega(\|x\|)}{\|x\|^\alpha} \frac{(2\|x\|)^\alpha}{\omega(2\|x\|)} \omega( \|x-y\|) \leq ( 1 + 4^\alpha L) \omega(\|x-y\|).
$$
And, if $y=0,$ we have the inequality $\| D \psi_\omega (x)  - D \psi_\omega(y)  \|_* = \| D \psi_\omega (x) \|_* \, \omega( \| x \| ) = \omega( \|x-y\|).$ We have thus shown that $M_\omega(D \psi_\omega) \leq  1 + 4^\alpha L.$ 

Finally, in order to prove the desired inequality, let $\lambda \in [0,1]$ and $x,y\in X.$ We can easily write
\begin{align*}
&\lambda \psi_\omega(x)+(1-\lambda)\psi_\omega(y)-\psi_\omega(\lambda x+(1-\lambda)y)  \\
& = \lambda\int_0^1 \langle D \psi_\omega( \lambda x+(1-\lambda) y + t(1-\lambda)(x-y)), (1-\lambda)(x-y) \rangle dt  \\
 & \qquad \qquad + (1-\lambda) \int_0^1 \langle D \psi_\omega( \lambda x+(1-\lambda) y + t \lambda (y-x)), \lambda (y-x) \rangle dt  \\
 & = \lambda(1-\lambda) \int_0^1  \langle D \psi_\omega( \lambda x+(1-\lambda) y + t(1-\lambda)(x-y)) -  D \psi_\omega( \lambda x+(1-\lambda) y + t \lambda (y-x)), x-y \rangle dt \\
 & \leq \lambda(1-\lambda) \int_0^1 (1 + 4^\alpha L) \omega\left( t \|x-y \| \right) \|x-y \| dt = \lambda(1-\lambda) (1 + 4^\alpha L)\varphi\left( \|x-y \| \right).
\end{align*}
\end{proof}

Let us define $m$ and $g$ by
\begin{align}\label{thirddefinitionsmandg}
g(x) := \inf_{y\in E} \lbrace f(y)+\langle G(y),x-y \rangle + M \psi(x-y) \rbrace, \quad x\in X \\
m(x) := \sup_{z\in E} \lbrace f(z)+\langle G(z),x-z \rangle - M \psi(x-z) \rbrace, \quad x\in X;
\end{align}
where now $\varphi(t):=\int_{0}^{t}\omega$, and $\psi(x):=\varphi(\|x\|)$. Bearing in mind Lemma \ref{supperreflexivesmoothnessgeneralmodulus} we see that the function
$$
p(u):=M\psi(u-z)=M\varphi(\|u-z\|)
$$
satisfies
$$
\lambda p(x)+(1-\lambda)p(y)-p(\lambda x+(1-\lambda)y) \leq  \lambda (1-\lambda) C^* M \psi(x-y) \quad \text{for all} \quad \lambda \in [0,1], \: x,y\in X;
$$
where $C^*$ is as in Lemma \ref{supperreflexivesmoothnessgeneralmodulus}. That is to say, $-p$ is strongly $C^*M\varphi$-paraconvex. Then, we define
$$
F(x):= \sup\lbrace h(x) \: : \: h \leq g \: \text{ and } \: h \: \text{is strongly $C^*M\varphi$-paraconvex}  \rbrace, \quad x\in X,
$$
and exactly as in the proof of Theorem \ref{Mainthm2} one may use Theorem \ref{MainTechnicalThm} to show that $F$ and $-F$ strongly $C ^* M\varphi$-paraconvex, which by Proposition \ref{if u and -u are strongly paraconvex then u is unoformly differentiable} implies that $F$ is of class $C^{1, \omega}(X)$, with the following estimate:
$$
\|DF(x)-DF(y)\|_* \leq 3 \left(1+\tfrac{ 3^{1+\alpha}}{1+\alpha}C \right) M \omega(\|x-y\|)
$$ for every $x,y\in X$.

As in that proof, we also have $m\leq F\leq g$ on $X$, $m=f=g=F$ on $E$, and $DF=G$ on $E$.
\end{proof}

Finally, Theorem \ref{Mainthm4mgSuperreflexive} is a consequence of Theorem \ref{Mainthm3mgSuperreflexive} and Proposition \ref{equivalence (mg)(W)}. Indeed, assuming that $(f,G)$ satisfies condition $(W^{1,\omega})$ on $E,$ the functions $F$ and $-F$ are strongly $C^* M \widetilde{\varphi}$-paraconvex, with $\widetilde{\varphi}=\varphi( 2 \cdot).$ Bearing in mind that $\widetilde{\varphi}(t)= \int_0^t \widetilde{\omega};$ where $\widetilde{\omega}=2 \omega( 2 \cdot),$ the estimate in Proposition \ref{if u and -u are strongly paraconvex then u is unoformly differentiable} yields $\|DF(x)-DF(y)\|_* \leq 3 C^* M \widetilde{\omega}(\|x-y\|) \leq 12  C^*M \omega( \|x-y\|)$ for every $x,y\in X$.

\medskip

Let us conclude this section with a proof that the alternate formula \eqref{third definition of F} also provides an admissible extension $F$ in Theorem \ref{MainthmOptimalmg}. 

\begin{theorem}\label{theorem for third definition of F}
Given an arbitrary subset $E$ of a Hilbert space $X$, and a $1$-jet $(f, G):E\to\R\times X$ such that $M:=A(f, G)<\infty$, let us define $\varphi(t):=\int_{0}^{t}\omega(s)ds$, 
$$
\mathcal{F} := \left\lbrace X \ni z \mapsto a+ \langle \xi , z \rangle - \sum_{i=1}^n \lambda_i M \varphi(|z-p_i|) \, : \, a\in \R,\, \xi \in X^*, \, p_i\in X, \, \lambda_i \geq 0, \,  \sum_{i=1}^n \lambda_i=1, \, n\in \N \right\rbrace,
$$
and
$$
F(x) :=\sup\lbrace h(x) \, : \, h\in \mathcal{F}, \: h \leq g \rbrace.
$$
Then $F$ is of class $C^{1, \omega}(X)$ and satisfies $(F, \nabla F)=(f, G)$ on $E$, with
$
A(F, \nabla F)\leq 2 A(f, G).
$
\end{theorem}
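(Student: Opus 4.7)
The plan is to follow the scheme of the proof of Theorem~\ref{MainthmOptimalmg}, with the difference that the supremum in the definition of $F$ is taken over the restricted class $\mathcal{F}\cap\{h\leq g\}$ rather than over all strongly $2M\varphi$-paraconvex $h\leq g$. For this to succeed I will verify three things about $\mathcal{F}$: (i) every $h\in\mathcal{F}$ is strongly $2M\varphi$-paraconvex; (ii) $\mathcal{F}$ is closed under translations, under addition of constants, and under convex combinations; and (iii) for each $z\in E$, the function $\psi_{z}^{-}(x):=f(z)+\langle G(z),x-z\rangle-M\varphi(|x-z|)$ belongs to $\mathcal{F}$ and satisfies $\psi_{z}^{-}\leq g$ on all of $X$. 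Granting (i)--(iii), the structure of the proof of Theorem~\ref{MainthmOptimalmg} can be transplanted almost verbatim.

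Property (i) follows because each shifted piece $-M\varphi(|\cdot-p_{i}|)$ is strongly $2M\varphi$-paraconvex by Lemma~\ref{constantnormomega} applied to the translate, convex combinations preserve strong $C\varphi$-paraconvexity, and the affine part $a+\langle\xi,\cdot\rangle$ is trivially strongly $0\varphi$-paraconvex. Property (ii) is direct from the definition: translations can be absorbed by altering the $p_{i}$ together with $a$ and $\xi$; constants can be absorbed into $a$; and a convex combination of two members of $\mathcal{F}$ is again a member, since the probabilities $\lambda\lambda_{i}$ and $(1-\lambda)\mu_{j}$ sum to one. Property (iii) is the special case $n=1$, $\lambda_{1}=1$, $p_{1}=z$; the inequality $\psi_{z}^{-}\leq g$ is exactly condition $(mg^{1,\omega})$ with constant $M=A(f,G)$. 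Consequently, setting $m(x):=\sup_{z\in E}\psi_{z}^{-}(x)$ we obtain $m\leq F\leq g$, so $F=f$ on $E$; and $F$, being a supremum of strongly $2M\varphi$-paraconvex functions, is itself strongly $2M\varphi$-paraconvex.

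The main obstacle is showing that $-F$ is also strongly $2M\varphi$-paraconvex, because now the sup defining $F$ ranges over a smaller family and the argument from Theorem~\ref{MainTechnicalThm} does not apply directly. I will handle this by fixing $x,y\in X$ and $\lambda\in[0,1]$, and, for any competitors $h_{1},h_{2}\in\mathcal{F}$ with $h_{i}\leq g$, forming
\[
h(z):=\lambda\,h_{1}(z+(1-\lambda)(x-y))+(1-\lambda)\,h_{2}(z-\lambda(x-y)),
\]
which lies in $\mathcal{F}$ thanks to (ii). The observation that $-g$ is strongly $2M\varphi$-paraconvex (as in Lemma~\ref{paraconvexitygm} with $C=2M$, applied at the two translated arguments) yields $h\leq g+2M\lambda(1-\lambda)\varphi(|x-y|)$ on $X$; hence the constant shift $h-2M\lambda(1-\lambda)\varphi(|x-y|)$ is in $\mathcal{F}$ and lies below $g$, so it is $\leq F$. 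Evaluating at $z_{0}:=\lambda x+(1-\lambda)y$, where the two translated arguments collapse to $x$ and $y$ respectively, and then taking the supremum successively over $h_{1}$ and $h_{2}$, produces $\lambda F(x)+(1-\lambda)F(y)-2M\lambda(1-\lambda)\varphi(|x-y|)\leq F(z_{0})$, i.e.\ strong $2M\varphi$-paraconvexity of $-F$. Proposition~\ref{if u and -u are strongly paraconvex then u is unoformly differentiable} then gives $F\in C^{1,\omega}(X)$ with $A(F,\nabla F)\leq 2M=2A(f,G)$, and $\nabla F=G$ on $E$ follows from the squeeze $\psi_{y}^{-}\leq F\leq\psi_{y}^{+}$ together with the fact that $\psi_{y}^{\pm}$ are Fr\'echet differentiable at $y$ with value $f(y)$ and gradient $G(y)$.
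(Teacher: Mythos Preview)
Your proposal is correct and follows essentially the same route as the paper: you establish that every $h\in\mathcal{F}$ is strongly $2M\varphi$-paraconvex, that $m\leq F\leq g$ (hence $F=f$ on $E$), that $F$ is strongly $2M\varphi$-paraconvex as a supremum, and then use the same averaged-translate construction $h(z)=\lambda h_{1}(z+(1-\lambda)(x-y))+(1-\lambda)h_{2}(z+\lambda(y-x))$ together with the strong $2M\varphi$-paraconvexity of $-g$ to prove that $-F$ is strongly $2M\varphi$-paraconvex. The only cosmetic differences are that the paper normalizes to $M=1$, verifies $h\in\mathcal{F}$ inline rather than via your abstracted closure property~(ii), and uses an $\varepsilon$-approximation where you take the supremum directly; these are equivalent.
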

\begin{proof}
By replacing $\omega(t)$ with $M\omega(t)$ if necessary, we may assume without loss of generality that $M=1$. Let us observe that:

$\bullet$ If $h\in \mathcal{F},$ then $h$ is strongly $2\varphi$-paraconvex. Indeed, let $h(z)=  a+ \langle \xi, z \rangle - \sum_{i=1}^n \lambda_i \varphi(|z-p_i|), z\in X$, where $a, \xi, \lambda_i, p_i$ are as in the definition of $\mathcal{F}.$ Then, for every $x,y\in X$ and $t \in [0,1],$ we can write
\begin{align*}
h(tx+(1-t)y)-th(x)-(1-t) h(y) & = \sum_{i=1}^n \lambda_i \left[ t \varphi(|x-p_i|)+(1-t) \varphi(|y-p_i|) - \varphi( |tx+(1-t)y-p_i| ) \right] \\
& \leq \sum_{i=1}^n \lambda_i t (1-t) 2 \varphi(|x-y|) = t(1-t) 2 \varphi(|x-y|);
\end{align*}
where we have used that $-\varphi \circ | \cdot |$ is strongly $2\varphi$-paraconvex.

\medskip

$\bullet$ $F$ is well defined and satisfies $F \leq g$. Indeed, since $m \leq g$ and any function $x \mapsto f(y)+\langle G(y), x-y\rangle -\varphi(|x-y|)$ belongs to $\mathcal{F},$ we have that $F$ is well defined and $m \leq F \leq g$ on $X.$ We then deduce that $F=f$ on $E$ and 
$$
f(y)+ \langle G(y), x-y \rangle-\varphi(|x-y|) \leq F(x) \leq f(y)+\langle G(y), x-y \rangle + \varphi(|x-y|), \quad x\in X, \, y \in E.
$$
This shows that $F$ is differentiable on $E$, with $\nabla F=G$ on $E.$

\medskip

$\bullet$ $F$ is strongly $2\varphi$-paraconvex. This is a consequence of the general and obvious fact that the supremum of a family of strongly $C \varphi$-paraconvex functions is also strongly $C \varphi$-paraconvex.   

\medskip

In order to show that $F\in C^{1, \omega}(X)$, let us also note the following.

$\bullet$ The function $-F$ is strongly $2\varphi$-paraconvex. Indeed, let $x,y\in X,\, t\in [0,1]$ and $\varepsilon>0.$ We can find $h_1, h_2 \in \mathcal{F}$ with $h_i \leq g$ and $F(x) \leq h_1(x) + \varepsilon, \, F(y) \leq h_2(y)+ \varepsilon.$ Define 
$$
h(z)=t h_1( z + (1-t)(x-y) ) + (1-t) h_2(z+t(y-x)), \quad z\in X.
$$
It is straightforward to see that $h \in \mathcal{F}.$ We have that
$$
h(z) \leq t g(z+(1-t)(x-y))+(1-t) g(z+t (y-x)) \leq g(z)+ \lambda (1-t) 2\varphi(|x-y|) \quad \text{for all} \quad z\in X;
$$
where we have used the fact that $-g$ is strongly $2\varphi$-paraconvex. This shows that $h-t(1-t)2\varphi(|x-y|) \leq g.$ On the other hand, we have that $h-t(1-t)2\varphi(|x-y|)\in \mathcal{F}.$ This implies, by the definition of $F,$ that $h-t(1-t)2\varphi(|x-y|) \leq F.$ And we can write
\begin{eqnarray*}
& & tF(x)+(1-t)F(y)-F(tx+(1-t)y) \\
& & \leq t h_1(x) + (1-t) h_2(y) +\varepsilon-h(tx+(1-t)y)+t(1-t)2\varphi(|x-y|) \\
& & = t(1-t)2\varphi(|x-y|) + \varepsilon.
\end{eqnarray*}
Letting $\varepsilon$ go to $0$ we thus obtain that $-F$ is strongly $2\varphi$-paraconvex. 

Now we can apply Proposition \ref{if u and -u are strongly paraconvex then u is unoformly differentiable} to conclude that $F\in C^{1,\omega}(X)$ and $A(F, \nabla F)\leq 2= 2A(f, G)$.
\end{proof}

\medskip

\section{The Bounded Case}\label{sectionboundedcase}

If a jet $(f, G)$ defined on a subset $X$ of a Hilbert space $X$ satisfies $A(f, G)<\infty$ then we already know that there exists $F\in C^{1, \omega}(X)$ such that $(F, \nabla F)=(f, G)$ on $E$. If the given functions $f, G$ are bounded on $E$, then it is natural to ask whether $(F, \nabla F)$ can be taken to be bounded. The extensions $F$ defined by \eqref{first definition of F} may not be bounded (in fact they are {\em never} bounded when $E$ is bounded), but in this section we will see how we can modify the proof of Theorems \ref{Mainthm1} and \ref{Mainthm2} so as to get $(F, \nabla F)$ bounded.
Also, with a different modification of the proof, we can obtain a certain continuous dependence of the extensions on the initial data, meaning that if a sequence $\{(f_n, G_n)\}_{n\in\N}$ of jets converges uniformly on $E$ to a jet $(f, G)$ then the corresponding extensions satisfy that $\lim_{n\to\infty}(F_n, \nabla F_n)= (F, \nabla F)$ uniformly on $X$.

In order to formulate our results more precisely, let us introduce some more notation.
Let us denote
$$
C^{1,\omega}_{b}(X)=\{h\in C^{1, \omega}(X)  \, : \, (h, \nabla h) \textrm{ is bounded on } X\}, 
$$
and endow this vector space with the norm
$$
\|h\|_{1, \omega, b}:=\|h\|_{\infty} +\|\nabla h\|_{\infty} +M_{\omega}(\nabla h),
$$
which makes $C^{1, \omega}_{b}(X)$ a Banach space. Also observe that the mapping $(f, G)\mapsto A(f, G)$ is a seminorm on the vector space of $1$-jets
$$
\mathcal{J}^{1, \omega}_{b}(E) :=\{ (f, G):E\to\R\times X \, : \, A(f, G)<\infty \text{ and } (f, G) \textrm{ is bounded on } E\},
$$
and therefore
$$
\rho(f,G) :=\|f\|_{\infty} +\|G\|_{\infty}+A(f,G)
$$
defines a norm on $\mathcal{J}^{1, \omega}_{b}(E)$.

\begin{theorem}\label{MainThmBoundedCase}
Let $X$ be a Hilbert space. There exist a nonlinear operator  $\mathcal{E}:\mathcal{J}^{1, \omega}_{b}(E)\to C^{1, \omega}_{b}(X)$ and a constant $C>0$, only depending on $\omega$, with the following properties:
\begin{enumerate}
\item $\left(\mathcal{E}(f,G), \nabla\mathcal{E}(f,G)\right)_{|_E}=(f, G)$ for every $(f, G)\in \mathcal{J}^{1, \omega}_{b}(E)$; 
\item $\|\mathcal{E}(f, G)\|_{1, \omega, b}\leq C\,\rho(f, G)$ for all $(f, G)\in \mathcal{J}^{1, \omega}_{b}(E)$.
\end{enumerate}
\end{theorem}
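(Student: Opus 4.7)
The plan is to adapt the construction from Theorem \ref{MainthmOptimalmg} by truncating the auxiliary envelope functions, and then apply the Lipschitz-constrained version of Theorem \ref{MainTechnicalThm} to obtain a bounded $C^{1,\omega}$ extension with bounded gradient. Given $(f,G)\in\mathcal{J}^{1,\omega}_{b}(E)$, set $M:=A(f,G)$, $B_f:=\|f\|_\infty$, $B_G:=\|G\|_\infty$, so $\rho(f,G)=B_f+B_G+M$. As in the proof of Theorem \ref{Mainthm2}, put $\psi_z^-(x):=f(z)+\langle G(z),x-z\rangle -M\varphi(|x-z|)$ and $\psi_y^+(x):=f(y)+\langle G(y),x-y\rangle+M\varphi(|x-y|)$, and pick a truncation level $B$, controlled by $\rho(f,G)$ up to factors depending only on $\omega$, that is large enough so that $\psi_z^-(x)\leq B$ and $\psi_y^+(x)\geq -B$ hold uniformly in $x,y,z$. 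This is possible because, as a function of $x$, $\psi_z^-$ attains its maximum at $|x-z|=\omega^{-1}(|G(z)|/M)$ with maximal value bounded by an explicit expression in $B_f,B_G,M,\omega$, and symmetrically for $\psi_y^+$.

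I would then set $\tilde{\psi}_z^-(x):=\max\{\psi_z^-(x),-B\}$ and $\tilde{\psi}_y^+(x):=\min\{\psi_y^+(x),B\}$ and verify the three hypotheses of Theorem \ref{MainTechnicalThm} with constant $C=2M$: strong $2M\varphi$-paraconvexity of $\tilde{\psi}_z^-$ and of $-\tilde{\psi}_y^+$ is preserved under taking $\max$ (respectively $\min$) with a constant; near each $y\in E$ the truncation is inactive since $B>B_f$, so the first-order expansion $\tilde{\psi}_y^{\pm}(x)=f(y)+\langle G(y),x-y\rangle+o(|x-y|)$ at $y$ still holds; and the choice of $B$ guarantees $\tilde{\psi}_z^-(x)\leq \tilde{\psi}_y^+(x)$ everywhere. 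Setting $\tilde{m}(x):=\sup_{z\in E}\tilde{\psi}_z^-(x)$ and $\tilde{g}(x):=\inf_{y\in E}\tilde{\psi}_y^+(x)$, we have $\tilde{m}\leq \tilde{g}$, with $\tilde{m}=\tilde{g}=f$ on $E$, and both functions lying in $[-B,B]$.

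The crucial gain from truncating is that $\tilde{m}$ is now Lipschitz. Each $\tilde{\psi}_z^-$ equals the constant $-B$ outside a ball $B(z,R)$, where $R=R(B,B_f,B_G,M,\omega)$ is determined by the equation $\psi_z^-(x)=-B$ (which has a bounded solution because $\varphi(t)$ grows faster than linearly), while inside that ball $\|\nabla\psi_z^-(x)\|\leq B_G+M\omega(R)=:L$; hence the supremum $\tilde{m}$ is $L$-Lipschitz. I would then define
\[
\mathcal{E}(f,G)(x):=\sup\left\{ h(x)\,:\, h\leq \tilde{g},\ h \text{ strongly } 2M\varphi\text{-paraconvex and } L\text{-Lipschitz}\right\}
\]
and invoke the second (Lipschitz-constrained) part of Theorem \ref{MainTechnicalThm}. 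That theorem delivers $\mathcal{E}(f,G)\in C^{1,\omega}(X)$ with $M_\omega(\nabla\mathcal{E}(f,G))\leq c_\omega M$, $\lip(\mathcal{E}(f,G))\leq L$, and $(\mathcal{E}(f,G),\nabla\mathcal{E}(f,G))=(f,G)$ on $E$. Since $\tilde{m}\leq \mathcal{E}(f,G)\leq \tilde{g}$ we also get $\|\mathcal{E}(f,G)\|_\infty\leq B$, and the gradient bound $\|\nabla\mathcal{E}(f,G)\|_\infty\leq L$ follows from the global Lipschitz bound.

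The main obstacle will be the bookkeeping required for the norm estimate: one must show that the constants $B$ and $L$ built from $B_f$, $B_G$, $M$ and $\omega$ combine so that $B+L+c_\omega M\leq C\rho(f,G)$ with $C$ depending only on $\omega$. The natural choices of $B$ and $R$ involve the quantity $\omega^{-1}(B_G/M)$, which is not linear in $B_G/M$; reconciling these expressions with the desired linear control by $\rho(f,G)$ is the delicate step, presumably handled through the Fenchel--Young duality $\omega(t)\,t=\varphi(t)+\varphi^{\ast}(\omega(t))$ recorded in Proposition \ref{relationvarphivarphistar} together with a careful case analysis on whether $B_G/M$ is small or large.
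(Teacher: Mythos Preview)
Your overall architecture (truncate the envelope functions, feed them into Theorem \ref{MainTechnicalThm}) is the right one, but the choice $M:=A(f,G)$ creates the very obstacle you flag at the end, and that obstacle is not merely bookkeeping: it is a genuine gap. With this $M$, the supremum of $\psi_z^-$ over $x$ equals $f(z)+M\varphi^*\!\bigl(|G(z)|/M\bigr)$, so the smallest admissible truncation level is
\[
B\;\ge\;B_f+M\,\varphi^*\!\bigl(B_G/M\bigr).
\]
This quantity is \emph{not} controlled linearly by $\rho(f,G)=B_f+B_G+M$ in general. For instance, if $\omega(t)=t^{1/2}$ then $\varphi^*(s)=\tfrac13 s^3$, and a single-point or two-point jet with $B_f=0$, $B_G=1$, $A(f,G)=\varepsilon$ forces $B\ge \tfrac{1}{3\varepsilon^2}$ while $\rho(f,G)=1+\varepsilon$. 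The same blow-up propagates to your $R$ and $L$, so no Fenchel--Young identity or case split on the size of $B_G/M$ will rescue the linear estimate $B+L+c_\omega M\le C\rho(f,G)$.

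The paper sidesteps this entirely by \emph{inflating} $M$ before building the envelopes: it takes
\[
M:=\max\Bigl\{\,\tfrac{3}{\varphi(1)}\bigl(\|f\|_\infty+\|G\|_\infty\bigr),\;A(f,G)\Bigr\},
\]
which is still bounded by a constant (depending only on $\omega$) times $\rho(f,G)$. With this $M$ one checks by an elementary one-line estimate (splitting $|x-y|\ge 1$ versus $|x-y|<1$, using $\varphi(t)\ge\varphi(1)t$ for $t\ge1$) that $\psi_y^-(x)\le \|f\|_\infty+\|G\|_\infty$ and $\psi_y^+(x)\ge -\|f\|_\infty-\|G\|_\infty$ for all $x$; hence the fixed truncation level $B=2(\|f\|_\infty+\|G\|_\infty)$ works. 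All the constants are then transparently linear in $\rho(f,G)$. A secondary simplification is that the paper does not invoke the Lipschitz-constrained envelope $\widetilde F$ at all: once $\|F\|_\infty\le B$ is known, the gradient bound follows directly from strong $2M\varphi$-paraconvexity via $\|\nabla F\|_\infty\le 2\|F\|_\infty+2M\varphi(1)$.
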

\begin{proof}
Given a jet $(f, G)\in \mathcal{J}^{1, \omega}_{b}(E)$, let us define $\mathcal{E}(f,G)$ as follows. For the number
$$
M :=\max\left\{ \frac{3}{\varphi(1)}\left( \|f\|_{\infty}+ \|G\|_{\infty}\right), \, A(f, G)\right\},
$$
let us set
\begin{align*}
m(x) :=\max\left\{ -2(\|f\|_\infty+ \|G\|_{\infty}), \, \sup_{z\in E}\{f(z)+\langle G(z), x-z\rangle -M\varphi(|x-z|) \} \right\}, \\
g(x) :=\min\left\{ 2(\|f\|_{\infty}+\|G\|_{\infty}), \, \inf_{y\in E}\{f(y)+\langle G(y), x-y\rangle +M\varphi(|x-y|) \}\right\}, \\
F(x) :=\sup\{h(x) \, : \, h\leq g \textrm{ on } X, \textrm{ and } h \textrm{ is strongly } 2M\varphi\textrm{-paraconvex}\},
\end{align*}
and 
$$
\mathcal{E}(f,G) :=F.
$$
Let us check that the mapping $(f,G)\mapsto \mathcal{E}(f,G)$ satisfies the properties of the statement.

If $|x-y|\geq 1$ we have (bearing in mind that $\varphi(t)\geq \varphi(1)t$ for $t\geq 1$ since $\varphi$ is convex)
\begin{equation*}
f(y)+\langle G(y), x-y\rangle -M\varphi(|x-y|)\leq \|f\|_{\infty} +\left( \|G\|_{\infty}-M\varphi(1)\right) |x-y|\leq \|f\|_{\infty}.
\end{equation*}
On the other hand, if $|x-y|<1$ then
$$
f(y)+\langle G(y), x-y\rangle -M\varphi(|x-y|)\leq \|f\|_{\infty}+\|G\|_{\infty}.
$$
In either case we have
\begin{equation}\label{controlling the parabolas 1}
f(y)+\langle G(y), x-y\rangle -M\varphi(|x-y|)\leq \|f\|_{\infty}+\|G\|_{\infty}
\end{equation}
for all $x\in X$, $y\in E$, and similarly we see that
\begin{equation}\label{controlling the parabolas 2}
f(z)+\langle G(z), x-z\rangle+M\varphi(|x-z|)\geq -\|f\|_{\infty}-\|G\|_{\infty}
\end{equation}
for all $x\in X$, $z\in E$.

Let us define, for each $y\in E$, the functions
\begin{align}\label{definition of psiy bounded case}
\psi_{y}^{-}(x) :=\max\left\{ -2(\|f\|_{\infty}+\|G\|_{\infty}), \, f(y)+\langle G(y), x-y\rangle -M\varphi(|x-y|) \right\}, \\
\psi_{y}^{+}(x) :=\min\left\{ 2(\|f\|_{\infty}+\|G\|_{\infty}), \, f(y)+\langle G(y), x-y\rangle +M\varphi(|x-y|) \right\}.
\end{align}
By using \eqref{controlling the parabolas 1} and \eqref{controlling the parabolas 2} and the assumption that $A(f,G)\leq M<\infty$, it is immediately checked that these functions satisfy conditions $(2)$ and $(3)$ of Theorem \ref{MainTechnicalThm}. Besides, recalling Lemma \ref{constantnormomega} and the fact that the maximum of two strongly $2M\varphi$-paraconvex functions is strongly $2M\varphi$-paraconvex, we also have that $m$ is strongly $2M\varphi$-paraconvex. Similarly, $-g$ is strongly $2M\varphi$-paraconvex too. Then we can apply Theorem \ref{MainTechnicalThm} with $C=2M$, obtaining that $F$ and $-F$ are strongly $2M\varphi$-paraconvex, hence $F\in C^{1, \omega}(X)$, and that $(F, \nabla F)=(f, G)$ on $E$, with
\begin{equation}\label{estimate for MwnablaF bounded case}
M_{\omega}(\nabla F)\leq 3 A(F, \nabla F)\leq  6 M.
\end{equation}
Since $m\leq F\leq g$, it is obvious that we also have
\begin{equation}\label{estimate for F bounded case}
\|F\|_{\infty}\leq 2 ( \|f\|_{\infty}+\|G\|_{\infty}).
\end{equation}
Let us now estimate $\|\nabla F\|_{\infty}$. By using \eqref{inequality for differentiability of u} with $s=1$ in the proof of Proposition \ref{if u and -u are strongly paraconvex then u is unoformly differentiable}, and recalling that both $F$ and $-F$ are strongly $2M\varphi$-paraconvex, we have
$$
\left| F(a+h)-F(a)- \frac{F(a+th)-F(a)}{t} \right|\leq 2M(1-t)\varphi(|h|),
$$
hence
$$
\left| \frac{F(a+th)-F(a)}{t} \right|\leq 2\|F\|_{\infty}+ 2M(1-t)\varphi(|h|),
$$
and by setting $|h|=1$ and letting $t\to 0^{+}$ we obtain
\begin{equation}\label{estimate for Lip F bounded case}
\|\nabla F\|_{\infty}\leq 2\|F\|_{\infty}+2M\varphi(1).
\end{equation}
In conclusion, by combining \eqref{estimate for MwnablaF bounded case}, \eqref{estimate for F bounded case} and \eqref{estimate for Lip F bounded case} we obtain that
$$
\|F\|_{1, \omega, b}=\|F\|_{\infty}+\|\nabla F\|_{\infty}+M_{\omega}( \nabla F)\leq C \left(\|f\|_{\infty}+\|G\|_{\infty}+A(f,G)\right) = C\, \rho(f, G),
$$
where $C>0$ is a constant only depending on $\omega$. 
\end{proof}

\begin{theorem}\label{MainThmBoundedCaseWithContinuity}
Let $X$ be a Hilbert space. There exists a constant $C>0$, only depending on $\omega$, such that for each number $A>0$ there exists a nonlinear operator $$\mathcal{E}_{A}:\mathcal{J}^{1, \omega}_{b}(E)\cap\{(f,G) : A(f,G)\leq A\}\to C^{1, \omega}_{b}(X)$$ with the following properties:
\begin{enumerate}
\item $\left(\mathcal{E}_A(f,G), \nabla\mathcal{E}_A(f,G)\right)_{|_E}=(f,G)$ for all $(f, G)\in \mathcal{J}^{1, \omega}_{b}(E)\cap\{(f,G) : A(f,G)\leq A\}$; 
\item $\|\mathcal{E}_A(f, G)\|_{1, \omega, b}\leq C \left( \|f\|_\infty+\|G\|_\infty+ A \right)$ for all $(f, G)\in \mathcal{J}^{1, \omega}_{b}(E)\cap\{(f,G) : A(f,G)\leq A\};$ 
\item If $\{(f_n, G_n)\}_{n\in\N}\subset \mathcal{J}^{1, \omega}_{b}(E)$ and $(f,G)\in \mathcal{J}^{1, \omega}_{b}(E)$ are such that $A(f_n,G_n)\leq A$, $A(f,G) \leq A$, and $(f_n, G_n)$ converges to $(f, G)$ uniformly on $E$, then $(\mathcal{E}_{A} (f_n, G_n), \nabla\mathcal{E}_{A} (f_n, G_n))$ converges to $(\mathcal{E}_{A}(f,G), \nabla \mathcal{E}_{A} (f,G))$ uniformly on $X$.
\end{enumerate}
\end{theorem}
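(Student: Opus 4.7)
The plan is to modify the construction from Theorem \ref{MainThmBoundedCase} so that every parameter depends continuously on the input jet, and then to deduce the uniform convergence $F_n\to F$ and $\nabla F_n\to\nabla F$ by a standard interpolation estimate. For a jet $(f, G)\in\mathcal{J}^{1,\omega}_{b}(E)$ with $A(f, G)\leq A$, I will set $M := A + \kappa(\|f\|_\infty + \|G\|_\infty)$ for a constant $\kappa=\kappa(\omega)$ chosen so that the parabolic term $M\varphi$ dominates the linear growth of $\langle G(y),\cdot\rangle$ away from $y$ (exactly the role played by the $3/\varphi(1)$ factor in Theorem \ref{MainThmBoundedCase}), and set the truncation level $K := C_1 (\|f\|_\infty + \|G\|_\infty)$ with $C_1=C_1(\omega)$. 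With these choices, both $M$ and $K$ depend linearly on $(f, G)$ in the uniform norm on $E$. I then define $g$ and $m$ as in the proof of Theorem \ref{MainThmBoundedCase} and set $\mathcal{E}_A(f,G):=F$ to be the strongly $2M\varphi$-paraconvex envelope of $g$ from below. Properties (1) and (2) then follow exactly as in Theorem \ref{MainThmBoundedCase}.

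For property (3), suppose $(f_n, G_n)\to (f, G)$ uniformly on $E$ with $A(f_n,G_n), A(f,G)\leq A$. Then $M_n\to M$ and $K_n\to K$. A first key step is to check that $g_n\to g$ uniformly on $X$: for fixed $x$, the infimum defining $g(x)$ is essentially attained by $y$ in a ball of radius $R$ depending only on $\sup_n(\|f_n\|_\infty+\|G_n\|_\infty)$ and on $A$ (because $\psi_y^{+}(x)\to +\infty$ as $|x-y|\to\infty$), so uniform convergence of the data on $E$ combined with $M_n\to M$ and $K_n\to K$ translates into uniform convergence of $g_n$ on $X$.

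The main obstacle is the uniform convergence $F_n\to F$ on $X$, because the paraconvexity constants $2M_n$ vary with $n$ and the corresponding classes of strongly $2M_n\varphi$-paraconvex functions are not nested. To handle this I plan a sandwich argument: for $\eta>0$, let $F^{\pm}$ denote the strongly $2(M\pm\eta)\varphi$-paraconvex envelopes of $g$ from below. For $n$ large enough that $|M_n-M|\leq\eta$, any strongly $2(M-\eta)\varphi$-paraconvex $h\leq g$ is automatically strongly $2M_n\varphi$-paraconvex, and $h-\varepsilon_n\leq g_n$ where $\varepsilon_n:=\|g_n-g\|_\infty\to 0$, so $F^{-}-\varepsilon_n\leq F_n$; the symmetric argument yields $F_n\leq F^{+}+\varepsilon_n$. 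Letting $\eta\to 0^{+}$ afterwards, the uniform convergence $F^{\pm}\to F$ is obtained by approximating strongly $2(M+\eta)\varphi$-paraconvex test functions through the explicit sup-formula of Theorem \ref{theorem for third definition of F} (with constant $2M$) at cost $O(\eta)$, yielding $F_n\to F$ uniformly on $X$.

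Finally, the uniform convergence $\nabla F_n\to\nabla F$ follows from a standard interpolation. From $A(F_n,\nabla F_n)\leq 2M_n$ and $A(F,\nabla F)\leq 2M$ one obtains, for each $x,h\in X$,
$$
|\langle \nabla F_n(x)-\nabla F(x), h\rangle|\leq 2\|F_n-F\|_\infty + 2(M_n+M)\varphi(|h|);
$$
taking the supremum over $|h|=t$, dividing by $t$, and using $\varphi(t)/t\leq\omega(t/2)$ from Proposition \ref{relationvarphivarphistar}(2) gives
$$
\|\nabla F_n(x)-\nabla F(x)\|_{*}\leq \tfrac{2\|F_n-F\|_\infty}{t}+2(M_n+M)\omega(t/2),
$$
and choosing $t=t_n\to 0^{+}$ with $t_n^{2}\gtrsim\|F_n-F\|_\infty$ makes the right-hand side uniformly small in $x$, so $\nabla F_n\to\nabla F$ uniformly on $X$.
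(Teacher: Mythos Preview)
Your overall strategy coincides with the paper's: the same construction for $\mathcal{E}_A$, the same localization argument for $g_n\to g$ uniformly, and the same interpolation (the paper's Lemma \ref{uniform convergence of gradients}) for $\nabla F_n\to\nabla F$. Properties (1) and (2) go through exactly as you say.

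The only genuine gap is in your argument for $F_n\to F$. Your sandwich $F^{-}-\varepsilon_n\leq F_n\leq F^{+}+\varepsilon_n$ is correct, but the claim that $F^{\pm}\to F$ uniformly as $\eta\to 0$ ``through the explicit sup-formula of Theorem \ref{theorem for third definition of F} at cost $O(\eta)$'' is not justified. Theorem \ref{theorem for third definition of F} describes a \emph{different} (though also admissible) envelope built from functions of the form $a+\langle\xi,\cdot\rangle-\sum\lambda_i M\varphi(|\cdot-p_i|)$; it does not give you a way to approximate an arbitrary strongly $2(M+\eta)\varphi$-paraconvex function $h\leq g$ by a strongly $2M\varphi$-paraconvex one still below $g$. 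If you try to replace the building blocks $-2(M+\eta)\varphi(|\cdot-p|)$ by $-2M\varphi(|\cdot-p|)$, the resulting function is \emph{larger}, and there is no reason it stays below $g$.

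What actually closes this step (and what the paper uses, bypassing your sandwich entirely) is the elementary rescaling observation: $h$ is strongly $a\varphi$-paraconvex if and only if $(b/a)h+c$ is strongly $b\varphi$-paraconvex. This gives directly
\[
F_n \;=\; \tfrac{M_n}{M}\,\sup\{\tilde h:\ \tilde h\leq \tfrac{M}{M_n}g_n,\ \tilde h\text{ strongly }2M\varphi\text{-paraconvex}\},
\]
and since $g_n$ and $F$ are uniformly bounded and $M_n\to M$, one has $\tfrac{M}{M_n}g_n\to g$ uniformly, hence the inner sup differs from $F$ by at most $\|\tfrac{M}{M_n}g_n-g\|_\infty$, and the outer factor $M_n/M$ introduces an error $|M_n/M-1|\,\|F\|_\infty$. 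The same rescaling would also complete your sandwich (apply it to $F^{+}$ itself rather than to test functions), but then the sandwich becomes an unnecessary detour. Replace the appeal to Theorem \ref{theorem for third definition of F} by this rescaling and your proof is complete.
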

\begin{proof}
Given a jet $(f, G)\in \mathcal{J}^{1, \omega}_{b}(E)\cap\{(f,G) : A(f,G)\leq A\}$, let us define $\mathcal{E}(f,G)$ as follows. For the number
$$
M := 3 \varphi(1)^{-1} \left( \|f\|_\infty + \|G\|_\infty \right) + A,
$$
let us set
\begin{align*}
g(x) :=\min\left\{ 2(\|f\|_\infty+\|G\|_{\infty}), \, \inf_{y\in E}\{f(y)+\langle G(y), x-y\rangle +M\varphi(|x-y|) \} \right\}, \\
F(x) :=\sup\{h(x) \, : \, h\leq g \textrm{ on } X, \textrm{ and } h \textrm{ is strongly } 2M\varphi\textrm{-paraconvex}\},
\end{align*}
and 
$$
\mathcal{E}_{A}(f,G) :=F.
$$
That the mapping $(f,G)\mapsto \mathcal{E}(f,G)$ satisfies properties $(1)$ and $(2)$ of the statement can be checked exactly as in the proof of the previous theorem. 

In order to prove $(3)$ we need to localize the infimum defining the associated functions $g$ and
$$
g_n(x):=\min\left\{ 2(\|f_n\|_{\infty}+\|G_n\|_{\infty}), \, \inf_{y\in E}\{f_n(y)+\langle G_n(y), x-y\rangle +M_n\varphi(|x-y|) \} \right\},
$$
where $M_n :=3 \varphi(1)^{-1} \left( \|f_n\|_\infty + \|G_n\|_\infty \right) + A$ for every $n\in \N.$ 
\begin{lemma}{\em
We have that 
$$
g_n(x)=\inf\left( \left\{ 2(\|f_n\|_{\infty}+\|G_n\|_{\infty})\right\}\bigcup  \left\{f_n(y)+\langle G_n(y), x-y\rangle +M_n \varphi(|x-y|) \, : \, y\in E\cap B(x, 1)\right\}\right)
$$
and
\begin{equation}\label{localization of g}
g(x)=\inf\left( \left\{ 2(\|f\|_{\infty}+\|G\|_{\infty})\right\}\bigcup  \left\{f(y)+\langle G(y), x-y\rangle +M\varphi(|x-y|) \, : \, y\in E\cap B(x, 1)\right\}\right)
\end{equation}
for all $x\in X$, $n\in\N$.}
\end{lemma}
\begin{proof}
If $x\in X$, $y\in E$ and $|x-y|\geq 1$ then
\begin{eqnarray*}
& & f(y)+\langle G(y), x-y\rangle +M\varphi(|x-y|)\geq -\|f\|_{\infty}-\|G\|_{\infty}|x-y|+M\varphi(1)|x-y|\\
& & \geq \left( M\varphi(1)-\|G\|_{\infty}\right) |x-y|-\|f\|_{\infty}\geq
3(\|f\|_{\infty}+\|G\|_{\infty})-\|G\|_{\infty}-\|f\|_{\infty} \\
& & =2(\|f\|_{\infty}+\|G\|_{\infty}).
\end{eqnarray*}
Therefore
$$
g(x)=\inf\left( \left\{ 2(\|f\|_\infty+\|G\|_{\infty})\right\}\bigcup  \left\{f(y)+\langle G(y), x-y\rangle +M\varphi(|x-y|) \, : \, y\in E\cap B(x, 1)\right\}\right).
$$
Obviously the same holds true of $g_n$.
\end{proof}

\begin{lemma}\label{gn converges to g uniformly}
{\em $(g_n)$ converges to $g$ uniformly on $X$.}
\end{lemma}
\begin{proof}
Let $\varepsilon>0$, and choose $n_0\in\N$ large enough so that
$$
2\left(\|f_n-f\|_{\infty}+\|G_n-G \|_{\infty}\right)\leq \varepsilon \quad \text{and} \quad \varphi(1) |M_n-M| \leq \varepsilon/4 \quad \textrm{ for all } n\geq n_0.
$$
Then, given $x\in X$, we either have $g(x)= 2(\|f\|_{\infty}+\|G\|_{\infty})$ or $g(x)<2(\|f\|_{\infty}+\|G\|_{\infty})$. In the first case we have
$$
g_n(x)-g(x)\leq 2(\|f_n\|_{\infty}+\|G_n\|_{\infty})-2(\|f\|_{\infty}+\|G\|_{\infty})\leq
2(\|f_n-f\|_{\infty}+\|G_n-G\|_{\infty})\leq \varepsilon
$$
for all $n\geq n_0$. In the second case, thanks to \eqref{localization of g} we may find $y_x\in E\cap B(x, 1)$ such that
$$
g(x)+\varepsilon/4 > f(y_x)+\langle G(y_x), x-y_x\rangle +M\varphi(|x-y_x|),
$$
hence, for all $n\geq n_0$,
\begin{align*}
g_n(x)-g(x)& \leq f_{n}(y_x)+\langle G_{n}(y_x), x-y_x\rangle +M_n \varphi(|x-y_x|)
-f(y_x)-\langle G(y_x), x-y_x\rangle -M \varphi(|x-y_x|) + \varepsilon/4\\
& =f_{n}(y_x)-f(y_x)+\langle G_{n}(y_x)-G(y_x), x-y_x\rangle + |M_n-M| \varphi(|x-y_x|)+ \varepsilon/4 \\
& \leq \|f_n-f\|_{\infty}+\|G_n-G\|_{\infty} + \varphi(1) |M_n-M| + \varepsilon/4 \leq\varepsilon.
\end{align*}
In either case we see that if $n\geq n_0$ then
$$
g_n(x)-g(x)\leq\varepsilon
$$
for all $x\in X$. Similarly one can check that
$$
g(x)-g_n(x)\leq\varepsilon
$$
for all $x\in X$, $n\geq n_0$. Thus we conclude that $\|g_n-g\|_{\infty}\leq \varepsilon$ for all $n\geq n_0$.
\end{proof}

\begin{lemma}\label{Fn converges to F uniformly}
{\em $(F_n)$ converges to $F$ uniformly on $X$.}
\end{lemma}
\begin{proof}
Observe that the family of functions $\lbrace g, g_n, F, F_n \rbrace_n$ is uniformly bounded thanks to property $(2)$ and the fact that $\lbrace(f_n,G_n)\rbrace_n$ converges uniformly to $(f,G).$ Together with Lemma \ref{gn converges to g uniformly}, this implies that, given $\varepsilon>0$, we can choose $n_0\in\N$ so that, for every $n \geq n_0$
\begin{equation}\label{firstroundinequalitiesepsilon}
M_n M^{-1} \leq 2, \quad \max \left\lbrace \|g_n-g\|_\infty, \left |M M_n^{-1}-1 \right | \|g_n\|_\infty, \left |M_n M^{-1}-1 \right | \|F\|_\infty \right\rbrace \leq \varepsilon/6,
\end{equation}
\begin{equation}\label{secondroundinequalitiesepsilon}
M M_n^{-1} \leq 2, \quad \max \left\lbrace \|g_n-g\|_\infty, \left |M_n M^{-1}-1 \right | \|g\|_\infty, \left |M M_n^{-1}-1 \right | \|F_n\|_\infty \right\rbrace \leq \varepsilon/6.
\end{equation}
In particular, \eqref{firstroundinequalitiesepsilon} implies that
\begin{equation}\label{thirdroundinequalities}
M M_n^{-1}g_n \leq g+\left |M M_n^{-1}-1 \right| \|g_n\|_\infty + \varepsilon/6 \leq g + \varepsilon/3 \quad \text{for each} \quad n \geq n_0.
\end{equation}
Observing that a function $h$ is strongly $a\varphi$-paraconvex if and only if $b a^{-1} h + c$ is strongly $b\varphi$-paraconvex, where $a,b>0$ and $c\in \R$ are any constants, the inequalities in \eqref{firstroundinequalitiesepsilon} and \eqref{thirdroundinequalities} yield, for every $x\in X$ and $n \geq n_0,$
\begin{eqnarray*}
& & F_n(x)= \sup\{ h(x) \, : \, h\leq g_n, \,\,\, h \textrm{ is strongly } 2M_n\varphi\textrm{-paraconvex}\} \\
 & & =M_n M^{-1}\sup\{ h(x) \, : \, h\leq M M_n^{-1} g_n, \,\,\, h \textrm{ is strongly } 2M\varphi\textrm{-paraconvex}\} \\
  & & \leq M_n M^{-1}\sup\{ h(x) \, : \, h\leq g+  \varepsilon/3, \,\,\, h \textrm{ is strongly } 2M\varphi\textrm{-paraconvex}\} \\
& & = M_n M^{-1}\sup\{ h(x) \, : \, h\leq g, \,\,\, h \textrm{ is strongly } 2M\varphi\textrm{-paraconvex}\} + M_n M^{-1}\varepsilon /3  \\
& & = M_n M^{-1} F(x) + M_n M^{-1}\varepsilon /3 \leq  F(x)+ \left | M_n M^{-1} -1 \right | \|F\|_\infty + 2\varepsilon/3 \\
& & \leq F(x)+ \varepsilon/6 + 2\varepsilon/3 \leq F(x) + \varepsilon.
\end{eqnarray*}
Similarly (using the inequalities of \eqref{secondroundinequalitiesepsilon}), we obtain
$$
F(x)\leq F_n(x)+\varepsilon
$$
for all $x\in X$, $n\geq n_0$.
Therefore $\|F_n-F\|_{\infty}\leq\varepsilon$ for all $n\geq n_0$.
\end{proof}

It only remains to be shown that $\lim_{n\to\infty}\|\nabla F_n-\nabla F\|_{\infty}=0$. This is a consequence of the following fact (which is of course well known; we include a short proof here for the reader's convenience).
\begin{lemma}\label{uniform convergence of gradients}
{\em Let $u:X\to\R$ be differentiable, and let $(u_k)$ be a sequence of differentiable functions such that $u_k$ converges to $u$ uniformly on $X$, and such that for some constant $c>0$, we have
\begin{equation*}
-c\varphi\left(|x-y|\right)\leq u_k(y)-u_k(x)- \langle \nabla u_k(x), y-x\rangle\leq c\varphi\left(|x-y|\right) 
\end{equation*}
and
\begin{equation*}
-c\varphi\left(|x-y|\right)\leq u(y)-u(x)- \langle \nabla u(x), y-x\rangle\leq c\varphi\left(|x-y|\right)
\end{equation*}
for all $k \in \N$ and all $x, y\in X$. Then $\|\nabla u_k -\nabla u\|_\infty$ converges to $0$ uniformly on $X$.}
\end{lemma}
\begin{proof}
By substracting the second inequality from the first one we get
\begin{eqnarray}\label{estimate for uniform convergence of gradients}
\langle \nabla u(x)-\nabla u_k(x), y-x\rangle \leq u_k(x)-u(x)+u(y)-u_k(y)+2c\varphi(|y-x|).
\end{eqnarray}
Given $\varepsilon>0$ we may choose $k_0\in\N$ so that $\|u_k-u\|_{\infty}\leq\varepsilon^2/4$ for all $k\geq k_0$. By taking $h\in X$ with $|h|=\varepsilon$ and $y=x+h$ in \eqref{estimate for uniform convergence of gradients} we obtain
$$
\langle \nabla u(x)-\nabla u_k(x), h\rangle \leq \varepsilon^2/2 +2c\varphi(\varepsilon),
$$
hence
$$
|\nabla u(x)-\nabla u_k(x)|=\sup_{|h|=\varepsilon}\langle \nabla u(x)-\nabla u_k(x), \varepsilon^{-1} h\rangle \leq \varepsilon/2 +2c\varphi(\varepsilon)/\varepsilon
$$
for all $k\geq k_0$, $x\in X$, and since $\varphi(t)=o(t)$ this implies that $\lim_{k\to\infty}\|\nabla u_k-\nabla u\|_{\infty}=0$.
\end{proof}
It is clear that property $(2)$ together with the fact that $\lbrace(f_n,G_n)\rbrace_n$ converges uniformly to $(f,G)$ imply that $\max \lbrace M_\omega(\nabla F_n), \, M_\omega(\nabla F) \rbrace\leq A^*C,$ for $n$ large enough and for a constant $A^*>0$ comparable to $\|f\|_\infty+\|G\|_\infty+A.$ Combining Lemma \ref{Fn converges to F uniformly} and this observation, we can apply Lemma \ref{uniform convergence of gradients} for $(F_n)_n$ and $F$ to conclude $\lim_{n\to\infty}\nabla F_n=\nabla F$ uniformly on $X$. 

The proof of Theorem \ref{MainThmBoundedCaseWithContinuity} is complete.
\end{proof}

\begin{remark}
{\em \hfill{}
\item[] $(1)$ Theorems \ref{MainThmBoundedCase} and \ref{MainThmBoundedCaseWithContinuity} have analogues for superreflexive spaces $X$ and the classes $C^{1,\omega}(X),$ assuming that $t^\alpha /\omega(t)$ is a nondecreasing function. We let the reader formulate them. The proofs are the same, with obvious changes.

\item[] $(2)$ It would be interesting to know whether one can improve Theorems \ref{Mainthm2} and \ref{MainThmBoundedCase} to find an extension operator with the additional property that $$\lim_{n\to\infty}A( \mathcal{E}(f_n, G_n)-\mathcal{E}(f,G), \nabla\mathcal{E}(f_n-f, G_n)-\nabla\mathcal{E}(f,G))=0$$ whenever $A(f_n-f, G_n-G)\to 0$, or at least such that $\mathcal{E}$ be continuous from the normed space $(\mathcal{J}^{1, \omega}_{b}(E), \rho)$ into $( C^{1, \omega}_{b}(X), \|\cdot\|_{1, \omega, b})$.}
\end{remark}

\section{The Lipschitz Case}\label{sectionlipschitzcase}

In this section we will show a variant of our main result in which we are given $(f, G)$ with $G$ bounded but $f$ unbounded, and we want an extension $F$ with $\nabla F$ bounded.

\begin{theorem}\label{thm for Lipschitz}
Let $E$ be an arbitrary subset of a Hilbert space $X$, and let $(f, G):E\to\R\times X$ be a $1$-jet. Then there exists $F\in C^{1, \omega}(X)$ with $F$ Lipschitz and $(F, \nabla F)_{|_E}=(f, G)$ if and only if $f$ is Lipschitz, $G$ is bounded and $A(f,G)<\infty.$ In such case we can take $F$ with the additional properties that $A(F, \nabla F)+\lip(F)\leq C \left( A(f, G)+ \lip(f) + \|G\|_\infty \right)$, where $C$ is a constant depending only on $\omega$.
\end{theorem}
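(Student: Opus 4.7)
Necessity is immediate. If $F\in C^{1,\omega}(X)$ is Lipschitz with $(F,\nabla F)|_E=(f,G)$, then $\lip(F)=\|\nabla F\|_\infty<\infty$, so $G=\nabla F|_E$ is bounded by $\lip(F)$, $f=F|_E$ inherits the same Lipschitz constant, and the Taylor-type argument of the necessity direction of Theorem \ref{MainthmOptimalmg} applied to $F$ yields $A(f,G)\leq M_\omega(\nabla F)<\infty$.

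For sufficiency, set $L=\lip(f)$, $\Lambda=\|G\|_\infty$, $M=A(f,G)$. The plan is to invoke the ``moreover'' clause of Theorem \ref{MainTechnicalThm} with $C=2M$, which produces a Lipschitz extension $\widetilde F\in C^{1,\omega}(X)$ provided one can exhibit families $\{\psi_z^\pm\}_{z\in E}$ of strongly $2M\varphi$-paraconvex functions satisfying conditions (1)--(3) of that theorem, together with the supplementary property that the lower envelope $m=\sup_z\psi_z^-$ be Lipschitz. I would keep $\psi_y^+(x)=f(y)+\langle G(y),x-y\rangle+M\varphi(|x-y|)$ as in the proof of Theorem \ref{MainthmOptimalmg}, and modify the naive $\psi_z^-(x)=f(z)+\langle G(z),x-z\rangle-M\varphi(|x-z|)$, whose supremum can decay like $-M\varphi(d(x,E))$ away from $E$ and therefore fails to be Lipschitz. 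By analogy with the capping technique used for the bounded case in Theorem \ref{MainThmBoundedCase}, I would set
$$
\widetilde\psi_z^-(x):=\max\{\psi_z^-(x),\,\mu(x)-K_z\},
$$
where $\mu:X\to\R$ is a fixed strongly $2M\varphi$-paraconvex, $(L+\Lambda)$-Lipschitz majorant of $f$ on $E$, and $K_z>0$ is calibrated so that the cap activates only beyond a scale $r_0=r_0(L,\Lambda,M,\omega)$. Then $\widetilde m=\sup_z\widetilde\psi_z^-$ inherits the Lipschitz constant of $\mu$, and Theorem \ref{MainTechnicalThm} delivers a $\widetilde F\in C^{1,\omega}(X)$ with $A(\widetilde F,\nabla\widetilde F)\leq 2M$ and $\lip(\widetilde F)\leq\lip(\mu)$.

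The main obstacle is to produce the paraconvex Lipschitz majorant $\mu$ and to verify that the capping preserves condition (3), $\widetilde\psi_z^-\leq\psi_y^+$: a flat constant cap fails (3) as soon as $E$ is unbounded, while an affine or concave cap fails to be strongly $C\varphi$-paraconvex when $\omega$ is nonlinear, since $x\mapsto -|x|$ is not strongly $C\varphi$-paraconvex whenever $\varphi(t)=o(t)$ as $t\to 0^+$ (cf.\ the discussion preceding Lemma \ref{constantnormholder}). A workable construction of $\mu$ is to subtract from $(f,G)$ an affine anchor $\ell(x)=f(z_0)+\langle G(z_0),x-z_0\rangle$ at some fixed $z_0\in E$, apply a localized version of Theorem \ref{MainThmBoundedCase} to the residual jet (which is bounded on bounded subsets of $E$), and then add $\ell$ back; the resulting $\mu\in C^{1,\omega}(X)$ is $(L+\Lambda+O(1))$-Lipschitz and strongly paraconvex as a sum of an affine function and a strongly paraconvex $C^{1,\omega}$ function. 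With $K_z$ of order $M\varphi(r_0)$ for $r_0$ proportional to $\omega^{-1}((L+\Lambda)/M)$, the inequality $\widetilde\psi_z^-\leq\psi_y^+$ can be checked by splitting on whether $|x-z|\leq r_0$ or $|x-z|>r_0$, using $A(f,G)\leq M$ in the first case, and the Lipschitz estimates on $\mu$ and $f$ together with the calibration of $K_z$ in the second. The resulting $\widetilde F$ satisfies $A(\widetilde F,\nabla\widetilde F)+\lip(\widetilde F)\leq C(A(f,G)+\lip(f)+\|G\|_\infty)$ with $C=C(\omega)$, as announced.
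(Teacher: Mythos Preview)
Your plan to invoke the ``moreover'' clause of Theorem \ref{MainTechnicalThm} is right, but the capping construction you propose does not close. Two concrete problems:

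First, the construction of $\mu$ is circular. After subtracting the affine anchor $\ell$, the residual jet $(f-\ell,\,G-G(z_0))$ is bounded on bounded subsets of $E$ but not bounded on $E$ when $E$ is unbounded, so Theorem \ref{MainThmBoundedCase} does not apply. A ``localized version'' would have to be a patching along the lines of Section \ref{sectionc1ub}, and that procedure produces a $C^{1,u}_{\mathcal B}$ function, not a globally Lipschitz $C^{1,\omega}$ one; you would be assuming what you are trying to prove.

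Second, even granting a strongly $2M\varphi$-paraconvex, $(L+\Lambda)$-Lipschitz $\mu$, the calibration of $K_z$ runs into conflicting constraints. Condition (2) forces $\mu(z)-K_z<f(z)$, hence $K_z>\mu(z)-f(z)$. For the cap to dominate $\psi_z^-$ outside $B(z,r_0)$ (so that $\widetilde\psi_z^-$ is Lipschitz with a $z$-independent constant) one needs $\mu(x)-K_z\geq f(z)+\langle G(z),x-z\rangle-M\varphi(|x-z|)$ for $|x-z|\geq r_0$. And condition (3) requires $\mu(x)-K_z\leq g(x)$ for \emph{all} $x\in X$. With $K_z$ near $\mu(z)-f(z)$, the last inequality becomes $\mu(x)-g(x)\leq \mu(z)-f(z)$ for all $x$ and all $z\in E$; since $g=f$ on $E$, this says $\sup_X(\mu-g)\leq\inf_E(\mu-g)$, which forces $\mu-g$ to be constant on $E$ and in general fails. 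Your claim that ``$\widetilde m$ inherits the Lipschitz constant of $\mu$'' thus does not follow: on the region where $\psi_z^-$ beats the cap, $\widetilde m$ is still governed by the non-Lipschitz $m$.

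The paper avoids this entirely by modifying the \emph{modulus} rather than the envelopes. One replaces $\varphi$ by
\[
\widetilde\varphi(t)=\begin{cases}\varphi(t),& 0\le t\le 1,\\ \varphi(1)+\omega(1)(t-1),& t\ge 1,\end{cases}
\]
so that $\widetilde\varphi$ (hence every $\psi_y^\pm$ built from it) is globally $\omega(1)$-Lipschitz. One then checks two things: $-\widetilde\varphi\circ|\cdot|$ is strongly $C\widetilde\varphi$-paraconvex for an absolute $C$ (by estimating $M_{\widetilde\omega}(\nabla(\widetilde\varphi\circ|\cdot|))$ separately inside, outside, and across the unit sphere); and $(f,G)$ satisfies $(mg^{1,\widetilde\omega})$ with a constant $\widetilde M\lesssim M+\varphi(1)^{-1}(\lip(f)+\|G\|_\infty)$ (using the original $(mg^{1,\omega})$ when $|x-y|,|x-z|\le 1$, and the Lipschitz/bounded hypotheses when not). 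Now $m$ built from $\widetilde\varphi$ is automatically Lipschitz with constant $\|G\|_\infty+\omega(1)\widetilde M$, and Theorem \ref{MainTechnicalThm} applies directly. Since $\widetilde\omega\leq\omega$, the resulting $F\in C^{1,\widetilde\omega}(X)\subset C^{1,\omega}(X)$.
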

\begin{proof}
Given $\omega:[0, \infty)\to [0, \infty)$, let us define
$$
\widetilde{\varphi}(t)=
\begin{cases}
\varphi(t)=\int_{0}^{t}\omega(s)ds & \textrm{ if } t\leq 1 \\
\varphi(1)+\omega(1)(t-1) & \textrm{ if } t\geq 1.
\end{cases}
$$
Let us consider the function $\widetilde{\psi}:=\widetilde{\varphi} \circ | \cdot |,$ and check that $-\widetilde{\psi}$ is strongly $C \widetilde{\varphi}$-paraconvex for some absolute constant $C>0$. Indeed, if $\widetilde{\omega}$ is defined as $\widetilde{\omega}=\omega $ on $[0,1]$ and $\widetilde{\omega}=\omega(1)$ on $[1,+\infty),$ observe that for all $u,v\in X$ such that $|u|, |v| \geq 1,$ we have that  
$$
|\nabla \widetilde{\psi}(u)-\nabla \widetilde{\psi}(v)| = \omega(1) \bigg | \frac{u}{|u|} - \frac{v}{|v|} \bigg |  \leq \frac{2 \, \omega(1)|u-v|}{\max(|u|,|v|)} \leq 4 \, \widetilde{\omega}(|u-v|).
$$
And if $u,v\in X$ are such that $|u|, |v| \leq 1,$ combining Lemma \ref{constantnormomega} with Proposition \ref{if u and -u are strongly paraconvex then u is unoformly differentiable} we obtain $| \nabla (\varphi \circ |\cdot|)(u) - \nabla (\varphi \circ |\cdot|) (v) | \leq A \omega(|u-v|)$ for an absolute constant $A>0.$ This implies
$$
| \nabla \widetilde{\psi}(u) - \nabla \widetilde{\psi}(v) | \leq A \omega(|u-v|) \leq 2A \omega\left(\frac{|u-v|}{2} \right)  = 2A \widetilde\omega\left(\frac{|u-v|}{2} \right)  \leq 2A \widetilde\omega(|u-v|).
$$
If one of the vectors, say $u,$ is inside the unit ball and the other is not, then the line segment $[u,v]$ intersects the unit sphere at a unique point $z,$ and we have
$$
| \nabla \widetilde{\psi}(u) - \nabla \widetilde{\psi}(v) | \leq  | \nabla \widetilde{\psi}(u) - \nabla \widetilde{\psi}(z) | + | \nabla \widetilde{\psi}(z) - \nabla \widetilde{\psi}(v) | \leq 2A \, \widetilde\omega(|u-z|)+ 4  \, \widetilde{\omega}(|z-v|) \leq (2A+4) \widetilde\omega(|u-v|).
$$
In any case we see that 
\begin{equation}\label{estimationnormboundedmodulus}
| \nabla \widetilde{\psi}(u) - \nabla \widetilde{\psi}(v) | \leq (2A+ 4) \widetilde{\omega}(|u-v|) \quad \text{for all} \quad u,v\in X.
\end{equation}
Now, given $x,y\in X$ and $\lambda \in [0,1],$ we can use \eqref{estimationnormboundedmodulus} to obtain
\begin{align*}
&\lambda \widetilde{\psi}(x)+(1-\lambda)\widetilde{\psi}(y)-\widetilde{\psi}(\lambda x+(1-\lambda)y)  \\
& = \lambda\int_0^1  \big \langle \nabla \widetilde{\psi}( \lambda x+(1-\lambda) y + t(1-\lambda)(x-y)), (1-\lambda)(x-y) \big \rangle dt  \\
 & \qquad \qquad + (1-\lambda) \int_0^1 \big \langle \nabla \widetilde{\psi}( \lambda x+(1-\lambda) y + t \lambda (y-x)), \lambda (y-x) \big \rangle dt  \\
 & = \lambda(1-\lambda) \int_0^1 \big \langle \nabla \widetilde{\psi}( \lambda x+(1-\lambda) y + t(1-\lambda)(x-y)) -  \nabla \widetilde{\psi}( \lambda x+(1-\lambda) y + t \lambda (y-x)), x-y \big \rangle dt \\
 & \leq \lambda(1-\lambda) \int_0^1 (2A+4)\widetilde{\omega}\left( t |x-y| \right) |x-y| dt = \lambda(1-\lambda) (2A+4)\widetilde{\varphi}\left( |x-y| \right).
\end{align*}
This proves that $-\widetilde{\psi}$ is strongly $C \widetilde{\varphi}$-paraconvex, with $C=2A+4$. This $C$ is not to be confused with that of the statement of the theorem.

Now let $M:= A(f,G)<\infty$ and let $y,z\in E,\, x\in X.$ Observe that if $|x-y|, |x-z| \leq 1,$ then 
$$
|f(y)+\langle G(y), x-y\rangle- f(z)-\langle G(z), x-z\rangle | \leq M \left( \varphi(|x-y|) + \varphi(|x-z|) \right) = M \left( \widetilde{\varphi}(|x-y|) + \widetilde{\varphi}(|x-z|) \right).
$$
On the other hand if $|x-y|>1$ or $|x-z| >1,$ using that $f$ is Lipschitz, $G$ is bounded, the fact that $t \leq \varphi(1)^{-1} \widetilde{\varphi}(t)$ for every $t \geq 1,$ and finally the convexity of $\widetilde{\varphi}$, we can write
\begin{align*}
 |f(y)+ & \langle G(y), x-y\rangle- f(z)-\langle G(z), x-z\rangle | \leq \lip(f) |y-z| + \|G\|_\infty \left( |x-y| + |x-z| \right) \\
& \leq \left( \lip(f)+\|G\|_\infty \right) \left( |x-y| + |x-z| \right) \leq \frac{\lip(f)+ \|G\|_\infty}{\varphi(1)} \widetilde{\varphi} \left(  |x-y| + |x-z| \right) \\
& \leq \frac{\lip(f)+ \|G\|_\infty}{2 \varphi(1)}  \left( \widetilde{\varphi} (  2|x-y| ) + \widetilde{\varphi}( 2|x-z| ) \right) \leq \frac{2 \left( \lip(f)+ \|G\|_\infty \right)}{\varphi(1)}  \left( \widetilde{\varphi} (  |x-y| ) + \widetilde{\varphi}( |x-z| ) \right).
\end{align*}
We conclude that
$$
|f(y)+\langle G(y), x-y\rangle- f(z)-\langle G(z), x-z\rangle | \leq \widetilde{M} \left( \widetilde{\varphi}(|x-y|) + \widetilde{\varphi}(|x-z|) \right) \quad y,z\in E, \, x\in X;
$$
where $\widetilde{M} = \max\left( M, 2 \, \varphi(1)^{-1}\left( \lip(f) + \|G\|_\infty \right) \right).$ 

The preceding observations show that the family of functions
$$
\psi_{y}^{\pm}(x)=f(y)+\langle G(y), x-y\rangle \pm \widetilde{M}\widetilde{\varphi}(|x-y|), \quad x\in X,\: y\in E,
$$
satisfy conditions $(1), (2)$ and $(3)$ of Theorem \ref{MainTechnicalThm}. In addition, since $(\widetilde{\varphi})'= \widetilde{\omega} \leq \omega(1),$ the function $m:= \sup_{y\in E} \psi_y^-$ is Lipschitz with $\lip(m) \leq \|G\|_\infty+ \omega(1) \widetilde{M}.$ Applying Theorem \ref{MainTechnicalThm} (by means of formula \eqref{definition of Ftilde in terms of psiy}), we obtain a Lipschitz function $F \in C^{1,\widetilde{\omega}}(X)$ such that $(F,\nabla F)=(f,G)$ on $E,$ $A(F,\nabla F) \leq C \widetilde{M}$ and $\lip(F) \leq \|G\|_\infty+ \omega(1) \widetilde{M}.$ Notice that Theorem \ref{MainTechnicalThm} can be applied for $\widetilde{\omega}$ and $\widetilde{\varphi}$ since the assumption that the modulus of continuity $\omega$ must satisfy $\lim_{t \to \infty} \omega(t)=\infty$ is not needed in the proof of Proposition \ref{if u and -u are strongly paraconvex then u is unoformly differentiable}. Finally, observe that, since $\widetilde{\omega} \leq \omega,$ we have that $F\in C^{1,\omega}(X)$ as well. 
\end{proof}

\section{The class $C^{1, u}_{\mathcal{B}}(X)$}\label{sectionc1ub}

Let $X$ be a Hilbert space, and let $C^{1, u}_{\mathcal{B}}(X)$ stand for the space of all Fr\'echet differentiable functions on $X$ whose derivatives are uniformly continuous on each bounded subset of $X$.
 
In this section we combine Theorem \ref{MainThmBoundedCase} with a standard partition of unity in order to characterize the $1$-jets $(f, G)$ which are restrictions to $E$ of some $(F, \nabla F)$ with $F\in C^{1, u}_{\mathcal{B}}(X) $.

\begin{theorem}\label{thm for the class C1ub}
Let $(f, G)$ be a $1$-jet defined on a subset $E$ of a Hilbert space $X$, and suppose that $(f, G)$ is bounded on each bounded subset of $E$.\footnote{Note that this is a necessary condition for $(f,G)$ to be the restriction to $E$ of some $(F, \nabla F)$ with $C^{1, u}_{\mathcal{B}}(X) $.} Then there exists $F\in C^{1, u}_{\mathcal{B}}(X) $ with $(F, \nabla F)_{|_E}=(f, G)$ if and only if, for all bounded sequences $(x_n)\subset X$, $(y_n), (z_n)\subset E$ with  $|x_n-y_n|+|x_n-z_n|>0$,
$$
\lim_{n\to\infty} \left( |x_n-y_n|+|x_n-z_n| \right)=0 \implies \lim_{n\to\infty}\frac{f(y_n)+\langle G(y_n), x_n-y_n\rangle-f(z_n)-\langle G(z_n), x_n-z_n\rangle}{|x_n-y_n|+|x_n-z_n|}=0.
$$
\end{theorem}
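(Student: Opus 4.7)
My plan for the theorem splits into necessity and sufficiency. For necessity, suppose $F\in C^{1,u}_{\mathcal{B}}(X)$ extends $(f,G)$, and let $(x_n),(y_n),(z_n)$ be bounded sequences as in the statement. Choose $R>0$ with all three sequences in $\overline{B}(0,2R)$; then $\nabla F$ admits a concave modulus of uniform continuity $\omega_R$ on $\overline{B}(0,2R)$, and writing $\varphi_R(t)=\int_0^t\omega_R(s)\,ds$, Taylor's theorem yields $|F(y)+\langle\nabla F(y),x-y\rangle - F(x)|\leq\varphi_R(|x-y|)$ for $x,y$ in that ball. Applying this at $(x_n,y_n)$ and $(x_n,z_n)$, subtracting, dividing by $|x_n-y_n|+|x_n-z_n|$, and using $\varphi_R(t)/t\to 0$ as $t\to 0^+$ yields the required vanishing of the differential quotient.

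For sufficiency, the plan is to reduce to the bounded setting of Theorem~\ref{MainThmBoundedCase} on a family of bounded pieces of $E$ and glue via a partition of unity. Fix a smooth partition of unity $\{\psi_k\}_{k\geq 0}$ on $X$ with $\mathrm{supp}(\psi_0)\subset B(0,2)$, $\mathrm{supp}(\psi_k)\subset\{k<|x|<k+2\}$ for $k\geq 1$, and each $\psi_k$ having bounded and Lipschitz gradient. Set $R_k=k+2$ and $E_k=E\cap\overline{B}(0,R_k)$; by hypothesis $(f,G)|_{E_k}$ is bounded. The technical heart is to build, for each $k$, a concave increasing modulus $\omega_k$ with $\omega_k(0)=0$ and $\omega_k(t)\to\infty$ as $t\to\infty$, such that $A(f,G;E_k)<\infty$ in the sense of \eqref{Our optimal condition} for $\varphi_k=\int_0^{\cdot}\omega_k$.

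To produce such $\omega_k$, I would examine
$$h_k(t):=\sup\left\{\frac{|f(y)+\langle G(y),x-y\rangle - f(z)-\langle G(z),x-z\rangle|}{|x-y|+|x-z|}: y,z\in E_k,\ x\in X,\ 0<|x-y|+|x-z|\leq t\right\}.$$
Since $y,z$ bounded and $|x-y|+|x-z|\leq t$ force $x$ to be bounded too, a sequential contradiction against the main hypothesis shows that $h_k$ is finite on some $(0,\delta_k]$ and $\lim_{t\to 0^+}h_k(t)=0$. For $|x-y|+|x-z|$ large, boundedness of $(f,G)$ on $E_k$ bounds the numerator by $2\|f\|_{\infty,E_k}+\|G\|_{\infty,E_k}(|x-y|+|x-z|)$. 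Taking $\omega_k$ to be the least concave majorant of $h_k$ on an interval around $0$, extended linearly at infinity with slope large enough to dominate $\|G\|_{\infty,E_k}$, gives the desired modulus; the elementary bounds $t\omega_k(t)\leq 2\varphi_k(t)$ and $\varphi_k(a+b)\leq 4(\varphi_k(a)+\varphi_k(b))$ (from Proposition~\ref{relationvarphivarphistar} and convexity of $\varphi_k$) then secure $A(f,G;E_k)<\infty$.

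With $\omega_k$ in hand, Theorem~\ref{MainThmBoundedCase} produces $F_k\in C^{1,\omega_k}_b(X)$ with $(F_k,\nabla F_k)=(f,G)$ on $E_k$ and $F_k,\nabla F_k$ bounded on $X$. Define $F:=\sum_k\psi_k F_k$, a locally finite $C^1$ sum. For $x\in E$ with $|x|\leq R$, only the $\psi_k$ with $k\leq R$ are nonzero at $x$, and for such $k$ we have $x\in E_k$, hence $F_k(x)=f(x)$ and $\nabla F_k(x)=G(x)$; combined with $\sum_k\nabla\psi_k\equiv 0$ this gives $F=f$ and $\nabla F=G$ on $E$. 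On each bounded set only finitely many summands contribute, and each $\nabla(\psi_k F_k)=F_k\nabla\psi_k+\psi_k\nabla F_k$ is uniformly continuous on $X$, so $\nabla F$ is uniformly continuous on every bounded subset of $X$, i.e.\ $F\in C^{1,u}_{\mathcal{B}}(X)$. The principal obstacle is the construction of $\omega_k$: one must reconcile the small-scale vanishing provided by the hypothesis with the large-scale growth needed both for $\omega_k(\infty)=\infty$ and for the estimates supporting Theorem~\ref{MainThmBoundedCase}.
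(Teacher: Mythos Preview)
Your proposal is correct and follows essentially the same approach as the paper: prove necessity via Taylor's theorem using a local modulus for $\nabla F$ on a large ball, and prove sufficiency by constructing for each bounded piece a suitable modulus $\omega_k$ from the local oscillation function (your $h_k$, the paper's $\alpha_k$), applying Theorem~\ref{MainThmBoundedCase} to obtain $C^{1,\omega_k}_b$ extensions $F_k$, and gluing with a radial $C^{1,1}$ partition of unity. The only cosmetic differences are that the paper takes the $E_k$ to be annular pieces $E\cap(B_{k+1}\setminus\mathrm{int}(B_{k-1}))$ rather than nested balls, and handles the large-scale estimate for $A(f_k,G_k,\omega_k)$ simply via the convexity bound $\varphi_k(t)\geq\varphi_k(1)\,t$ for $t\geq 1$ (so no special choice of slope for the linear extension of $\omega_k$ is needed); neither difference affects the argument.
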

\begin{proof}
For every $x\in X$, $y, z\in E$ with $|x-y|+|x-z|>0$, let us denote
$$
\theta(x,y,z) :=\frac{|f(y)+\langle G(y), x-y\rangle-f(z)-\langle G(z), x-z\rangle|}{|x-y|+|x-z|}.
$$
Thus the theorem states that $(f, G)=(F, \nabla F)$ for some $F\in C^{1, u}_{\mathcal{B}}(X) $ if and only if for all bounded sequences $(x_n)\subset X$, $(y_n), (z_n)\subset E$ with  $|x_n-y_n|+|x_n-z_n|>0$,
\begin{equation}\label{condition for C1ub extension}
\lim_{n\to\infty} \left( |x_n-y_n|+|x_n-z_n| \right)=0 \implies \lim_{n\to\infty}\theta(x_n, y_n, z_n)=0.
\end{equation}
Let us first show the necessity of this condition. Assume that $(F, \nabla F)=(f,G)$ on $E$ for some $F\in C^{1, u}_{\mathcal{B}}(X) $. For every modulus $\omega$ and every bounded set $B$ in $X$, let us denote
$$
A(F, \nabla F, \omega, B)=\sup_{x, y, z\in B, \, |x-y|+|x-z|> 0}\frac{ |F(y)+\langle \nabla F(y), x-y\rangle- F(z)-\langle \nabla F(z), x-z\rangle | }{ \varphi_{\omega}\left(|x-y|\right) +\varphi_{\omega}\left(|x-z|\right)},
$$
where $\varphi_{\omega}(t)=\int_{0}^{t}\omega(s)ds$. 
Then, for every ball $B$ in $X$, we have that
$$
M_{B, \omega}:=A(F, \nabla F, \omega, B)<\infty
$$
for some modulus $\omega$ depending on $\nabla F$ and $B$. Therefore, for all $x\in B,\, y, z\in E \cap B$ with $|x-y|+|x-z|>0$, using the fact that $\varphi_{\omega}$ is convex, we have
$$
\theta(x,y,z)\leq M_{B, \omega}\frac{\varphi_\omega(|x-y|)+\varphi_{\omega}(|x-y|)}{|x-y|+|x-z|} \leq M_{B, \omega} \frac{\varphi_{\omega}(|x-y|+|x-y|)}{|x-y|+|x-z|}\leq M_{B, \omega}\omega\left(|x-y|+|x-z|\right),
$$
which implies  \eqref{condition for C1ub extension}.

Conversely, if $(f, G)$ satisfies \eqref{condition for C1ub extension}, let us construct an $F\in C^{1, u}_{\mathcal{B}}(X) $ such that $(F, \nabla F)_{|_E}=(f, G)$.
Fix a function $\beta :\R\to [0,1]$ such that: $\beta$ is of class $C^{1,1}$; $\beta(t)>0 \iff t\in (-1, 1)$; $\beta(0)=1$; $\beta'(t)> 0 \iff t\in (-1, 0)$; $\beta(t)+\beta(t-1)=1$ for all $t\in [0,1]$, and $\textrm{Lip}(\beta)\leq 2$, and define
$$
\beta_{1}(t)=
\begin{cases}
1 & \textrm{ if } t\leq 1 \\
\beta(t-1) &  \textrm{ if }t\geq 1,
\end{cases}
$$
and for every $k\geq 2$,
$$
\beta_k(t)=\beta(t-k).
$$
Next, for every $k\in \N$, let us define $\psi_{k}:X\to [0,1]$ by
$$
\psi_k(x)=\beta_k\left( |x| \right),
$$
and denote $B_k=B(0, k)$ for each $k\in\N$, with $B_0=\emptyset$.
Then we have:
\begin{enumerate}
\item $\psi_k\in C^{1,1}(X)$ for all $k$;
\item $\sum_{k=1}^{\infty}\psi_k(x)=1$ for all $x\in X$;
\item $\textrm{supp}(\psi_{k})=B_{k+1}\setminus \textrm{int}(B_{k-1})$ for all $k\geq 2$, and $\textrm{supp}(\psi_1)=B_2$;
\item $\textrm{Lip}(\psi_k)\leq 2$ for all $k$.
\end{enumerate}
Also denote
$$
E_1 :=E\cap B_2, \,\,\, E_k :=E\cap \left( B_{k+1}\setminus \textrm{int}(B_{k-1})\right),
$$
and
$$
f_{k} :=f_{|_{E_k}}, \,\,\, G_k :=G_{|_{E_k}}.
$$
For each $k\in\N$ let us now define $\alpha_{k}:(0, \infty)\to [0, \infty)$ by
$$
\alpha_k(t) :=\sup\left\{ \theta(x, y, z) : x\in B_{3k}, \, y, z\in E\cap B_{3k}, 0<|x-y|+|x-z| \leq t \right\}.
$$
\begin{lemma}
{\em We have that $\lim_{t\to 0^{+}}\alpha_k(t)=0$.}
\end{lemma}
\begin{proof}
Otherwise there exist $\varepsilon>0$ and sequences $t_n\searrow 0$, $(y_n), (z_n)\subset E\cap B_{3k}$, $(x_n)\subset B_{3k}$ with $|x_n-y_n|+|x_n-z_n| \leq t_n$ and 
$$
\theta(x_n, y_n, z_n)\geq \varepsilon
$$
for all $n\in\N$, which contradicts \eqref{condition for C1ub extension}.
\end{proof}

Now let us set $\alpha_k(0)=0$. If $\alpha_k:[0, \infty)\to [0, \infty)$ is constantly $0$ then $G_k$ is constant, and for any $y_k\in B_k\cap E$ the function $F_k(x)=f(y_k)+\langle G(y_k), x-y_k\rangle$ has the property that $(F_k, \nabla F_k)=(f_k, G_k)$ on $E_k$. If $\alpha_k$ is not constant, we define $\gamma_k:[0, \infty)\to [0, \infty)$ by
$$
\gamma_k(t)=\inf\{ g(t) \, | \, g: [0, \infty)\to\R \textrm{ is concave and } g\geq \alpha_k\}
$$
(the concave envelope of $\alpha_k$). Then $\gamma_k$ is a nondecreasing continuous concave modulus of continuity. Define then $\omega_k:[0, \infty)\to [0, \infty)$ by
$$
\omega_k(t)=
\begin{cases}
\gamma_k(t) & \textrm{ if } 0\leq t\leq t_k \\
\gamma_k(t_k) +\gamma_{k}'(t_k)(t-t_k)  & \textrm { if } t\geq t_k,
\end{cases}
$$
where $t_k>0$ is some point at which $\gamma_k$ is differentiable and $\gamma_{k}'(t_k)>0$. Then $\omega_k$ is an increasing continuous concave modulus of continuity satisfying $\lim_{t\to\infty}\omega_k(t)=\infty$, $\gamma_k\leq \omega_k$ on $[0, \infty)$, and $\gamma_k=\omega_k$ on $[0, t_k]$. Next let us set
$$
\varphi_k(t)=\int_{0}^{t}\omega_k(s)ds, \,\,\, t\in [0, \infty).
$$
\begin{lemma}
{\em For each $k\in \N$ we have that 
$$
A(f_k, G_k, \omega_k) :=\sup_{x\in X; \, y, z\in E_k,\, |x-y|+|x-z|> 0}\frac{ |f(y)+\langle G(y), x-y\rangle- f(z)-\langle G(z), x-z\rangle | }{ \varphi_{k}\left(|x-y|\right) +\varphi_{k}\left(|x-z|\right)}<\infty.
$$}
\end{lemma}
\begin{proof}
From the above construction of $\omega_k$ and $\alpha_k$ it is clear that
$$
\theta(x, y, z)\leq \omega_k(|x-z|+|x-y|)\leq \omega_k(|x-z|)+\omega_k(|x-y|)
$$
for all $x\in B_{3k}$ and all $y,z\in E\cap B_{3k}$. This implies that
\begin{align*}
 |f(y)& +\langle G(y), x-y\rangle -f(z)-\langle G(z), x-z\rangle| \leq 
\left( |x-z|+|x-y|\right) \left( \omega_k (|x-z|)+\omega_k(|x-y|)\right) \\
 & \leq 2\max\{ |x-z|, |x-y|\} \, 2\max\{ \omega_k (|x-z|), \omega_k(|x-y|)\} \\
   & =4\max\{ |x-z|\omega_k (|x-z|),  |x-y|\omega_k(|x-y|)\} \leq 8\left(\varphi_k(|x-z|)+\varphi_k(|x-y|)\right)
\end{align*}
for all $x\in B_{3k}$ and all $y,z\in E\cap B_{3k}$. On the other hand, if $x\notin B_{3k}$ and $y, z\in E_k$ then we have that $|x-y|\geq 1$, $|x-z|\geq 1$, and using the convexity of $\varphi_k$ we get
\begin{align*}
 & \frac{|f(y)+\langle G(y), x-y\rangle -f(z)-\langle G(z), x-z\rangle|}{\varphi_{k}(|x-y|)+\varphi_{k}(|x-z|)}\leq \frac{2\|f_{k}\|_{\infty} +\|G_{k}\|_{\infty}\left(|x-y|+|x-z|\right)}{\varphi_{k}(|x-y|)+ \varphi_k(|x-z|)} \\
 & \qquad \leq \frac{ \left( \|f_{k}\|_{\infty} +\|G_{k}\|_{\infty} \right)\left(|x-y|+|x-z|\right)}{\varphi_{k}(|x-y|)+ \varphi_k(|x-z|)} \leq \frac{\|f_{k}\|_{\infty} +\|G_{k}\|_{\infty}}{\varphi_{k}(1)}.
\end{align*}
Therefore we have
$$
A(f_k, G_k, \omega_k)\leq\max\left\{ 8, \, \varphi_k(1)^{-1} \left( \|f_{k}\|_{\infty} +\|G_{k}\|_{\infty} \right) \right\} <\infty.
$$
\end{proof}

Now we can apply Theorem \ref{MainThmBoundedCase} to find a function $F_k\in C^{1, \omega_k}(X)$ such that $(F_k, \nabla F_k)_{|_{E_k}}=(f_k, G_k)$, with $(F_k, \nabla F_k)$ bounded. Let us finally define
$$
F=\sum_{k=1}^{\infty}\psi_k F_k.
$$
Since the sum defining $F$ is finite on every bounded subset of $X$, the functions $\psi_k$, $\nabla\psi_k$, $F_k$ and $\nabla F_k$ are bounded, and $\nabla\psi_k$ and $\nabla F_k$ are uniformly continuous on $X$, it is clear that $F\in C^{1, u}_{\mathcal{B}}(X) $. Also, using the facts that $\sum_{k=1}^{\infty}\nabla\psi_k=0$ and $F_k(y)=f_k(y)=f(y)$ and $\nabla F_k(y)=G_k(y)=G(y)$ if $y\in\textrm{supp}(\psi_k)\cap E$, we have that, for each $y\in E$, $F(y)=f(y)$ and
\begin{eqnarray*}
\nabla F(y)=\sum_{k=1}^{\infty}\psi_k(y)\nabla F_k(y)+ \sum_{k=1}^{\infty}F_k(y)\nabla\psi_k(y)
 =\sum_{k=1}^{\infty}\psi_k(y) G(y)+ F(y) \sum_{k=1}^{\infty} \nabla\psi_k(y)=G(y),
\end{eqnarray*}
hence $(F, \nabla F)_{|_E}=(f, G)$.
\end{proof}

\medskip

\end{document}